\documentclass[11pt,a4paper,reqno]{amsart} 
\usepackage{amssymb}
\usepackage{latexsym}
\usepackage{amsmath}
\usepackage{mathrsfs}
\usepackage{upgreek}
\usepackage{mathabx}
\usepackage[X2,T1]{fontenc}
\usepackage{cite}
\usepackage{color}

\usepackage{amscd}
\usepackage{tikz-cd}

\usepackage{calc}                   

\newcommand{\scal}[2]{\langle #1,#2\rangle}
\newcommand{\rr}[1]{\mathbf R^{#1}}

\newcommand{\zz}[1]{\mathbf Z^{#1}}

\newcommand{\nm}[2]{\Vert #1\Vert _{#2}}
\newcommand{\NM}[2]{\left \Vert #1\right \Vert _{#2}}
\newcommand{\nmm}[1]{\Vert #1\Vert }

\newcommand{\op}{\operatorname{Op}}

\newcommand{\sets}[2]{\{ \, #1\, ;\, #2\, \} }
\newcommand{\ep}{\varepsilon}
\newcommand{\fy}{\varphi}
\newcommand{\cdo}{\, \cdot \, }
\newcommand{\supp}{\operatorname{supp}}

\newcommand{\ON}{\operatorname{ON}}

\newcommand{\vrum}{\vspace{0.1cm}}

\newcommand{\maclB}{\mathcal B}

\newcommand{\maclD}{\mathcal D}

\newcommand{\maclK}{\mathcal K}
\newcommand{\maclL}{\mathcal L}
\newcommand{\maclM}{\mathcal M}
\newcommand{\maclS}{\mathcal S}
\newcommand{\mascB}{\mathscr B}
\newcommand{\mascE}{\mathscr E}
\newcommand{\mascF}{\mathscr F}

\newcommand{\mascI}{\mathscr I}
\newcommand{\mascP}{\mathscr P}
\newcommand{\mascS}{\mathscr S}

\newcommand{\maclH}{\mathcal H}

\newcommand{\dbar}{d\hspace*{-0.08em}\bar{}\hspace*{0.2em}}

\newcommand{\Sets}[2]{\left \{ \, #1\, ;\, #2\, \right \} }

\numberwithin{equation}{section}          

\newtheorem{thm}{Theorem}
\numberwithin{thm}{section}
\newtheorem*{tom}{\rubrik}
\newcommand{\rubrik}{}
\newtheorem{prop}[thm]{Proposition}

\newtheorem{lemma}[thm]{Lemma}

\theoremstyle{definition}

\newtheorem{defn}[thm]{Definition}
\newtheorem{example}[thm]{Example}

\theoremstyle{remark}
\newtheorem{rem}[thm]{Remark}              

\author{Serap {\"O}ztop}

\address{Department of Mathematics, Faculty of Science,
\.{I}stanbul University,
\.{I}stanbul,
T\"urkiye}

\email{oztops@istanbul.edu.tr}

\author{Joachim Toft}

\address{Department of Mathematics,
Linn{\ae}us University, V{\"a}xj{\"o}, Sweden}

\email{joachim.toft@lnu.se}

\author{R{\"u}ya {\"U}ster}

\address{Department of Mathematics, Faculty of Science,
\.{I}stanbul University,
\.{I}stanbul,
T\"urkiye}

\email{ruya.uster@istanbul.edu.tr}

\title{Fourier integral operators on Orlicz modulation spaces}

\keywords{}

\subjclass[2010]{}

\begin{document}

\begin{abstract}
We establish continuity, compactness and
Schatten-von Neumann properties for
Fourier integral operators with
amplitudes in Orlicz modulation
spaces, when acting on other Orlicz
modulation spaces themselves. The phase
functions are non smooth and admit second order 
derivatives in suitable classes of modulation 
spaces.

\end{abstract}

\maketitle

\section{Introduction}\label{sec0} 

\par

The aim of the paper is to investigate
continuity and compactness properties for
Fourier integral operators
with non-smooth amplitudes (or symbols), when 
acting on Orlicz modulation spaces.
In particular we extend continuity 
and compactness properties
of the Fourier integral operators in 
\cite{CT1,CT2,FeGaPr,ToCoGa}, where related results were 
obtained for such operators when acting on 
classical modulation spaces (of Lebesgue types).
These earlier results are in turn extensions
and generalizations of pioneering results
and investigations performed by Boulkhemair in
\cite{Bu1}, where $L^2$-continuity of
subclasses of our Fourier integral operators 
were established.

\par

We recall that Orlicz versions of Lebesgue type 
spaces are obtained by replacing involved Lebesgue 
norms with Orlicz norms, which are parameterized 
with so-called Young functions. By choosing these 
Young functions in suitable ways one recovers the 
definitions of Lebesgue norms. Hence Orlicz type 
spaces extend the notions of analogous Lebesgue type 
spaces. In our situation, the family of Orlicz 
modulation spaces contains the corresponding family 
of classical modulation spaces.

\par

A Fourier integral operator is an operator
$\op _\fy (a)$, which is parameterized by the
amplitude (or symbol) $a$ and phase function $\fy$, defined
on (suitable extensions of) the phase space. 
For appropriate
$a \in \mascS '(\rr {2d+m})$ 
and real-valued
$\fy \in C(\rr {2d+m})$, $\op _\fy (a)$
is an operator from
$\mascS (\rr {d})$ to $\mascS '(\rr {d})$,
which is commonly defined as
\begin{equation}
\label{Eq:FIntOp}
\begin{aligned}
\op _\fy (a)f(x)
&=
(2\pi )^{-\frac 12(d+m)}
\iint _{\rr {d+m}}a(x,y,\zeta
)f(y)e^{i\fy (x,y,\zeta )}\, dyd\zeta ,
\\[1ex]
f&\in \mascS (\rr {d}),\quad x\in \rr {d}.
\end{aligned}
\end{equation}
(See \cite{H} and Section \ref{sec1} for notations.)
To some extent, the amplitude $a$ quantifies the
amplification, while the phase function $\fy$
is linked to (non-homogeneous) modulations within 
the systems.

\par

In various situations were Fourier integral 
operators are used, the right choice of the phase 
function $\fy$ is crucial. In hyperbolic problems,
which is a common field for applying
Fourier integral operators, one usually assumes that
$\fy (x,y,\zeta )$ is smooth when $\zeta \neq 0$ and 
positively homogeneous of order one with respect to the
$\zeta$ variable. In these hyperbolic situations, the adapted
$\fy (x,y,\zeta )$ usually fails to be differentiable in the 
$\zeta$ variable when $\zeta =0$. Furthermore,
one usually impose strong regularity assumptions on
the amplitudes, e.{\,}g. that they should belong to
subclasses of the H{\"o}rmander class $S^r_{0,0}$, 
or that they should belong to so-called SG-classes.
(See e.{\,}g. 
\cite{BoCoPeTo,Cap,Cori1,CoJoTo,CoriRod,
CorRuz,RuS1,RuS4,RuS5}
and the references therein.)

%
%

\par

Roughly speaking, in contrast to the previous 
assumptions adapted to hyperbolic problems,
in our situation we use the framework in
\cite{Bu1} by Boulkhemair and in \cite{ToCoGa}
concerning (lack of) regularity assumptions on
the amplitude $a$ and phase function $\fy$.
Especially we impose less restrictions
on $\fy (x,y,\zeta )$ outside $\zeta =0$, but 
stronger
regularity with respect to $\zeta$ when $\zeta =0$,
compared to the situation of hyperbolic problems.
Furthermore, we stress that in similar ways as in
\cite{Bu1,ToCoGa}, the regularity assumptions on the 
amplitude $a$ are also relaxed compared to what 
is common for Fourier integral operators.


%
%
%

\par

In order to be more specific, suppose that
$\omega$, $\omega _1$, $\omega _2$ and $v$ are 
suitable weight functions, and that $\Phi$
is a suitable Young function. Also suppose that
$a$ belongs to the modulation space
$M^{\infty ,1}_{(\omega )}(\rr {2d+m})$
(of so-called Sj{\"o}strand class), and that
$$
\fy ''\in M^{\infty ,1}_{(v)}(\rr {2d+m})
$$
satisfies the non-degeneracy condition
\begin{equation}
\label{Eq:DetphiCond}
\left |
\det
\left (
\begin{matrix}
\fy ''_{y,x} & & \fy ''_{\zeta ,x}
\\[1ex]
\fy ''_{y,\zeta} &  & \fy ''_{\zeta ,\zeta}
\end{matrix}
\right )
\right |
\ge
\dbar .
\end{equation}
for some $\dbar >0$. Then Theorem \ref{Thm:Cont1}
in Section \ref{sec2} asserts that $\op _\fy (a)$
is continuous between the Orlicz modulation spaces
$M^{\Phi}_{(\omega _1)}(\rr d)$ and
$M^{\Phi}_{(\omega _2)}(\rr d)$. That is,
the map
\begin{equation}
\label{Eq:IntroCont1}
\op _\fy (a) : M^{\Phi}_{(\omega _1)}(\rr d)
\to
M^{\Phi}_{(\omega _2)}(\rr d)
\end{equation}
is continuous.
If, more restrictive,
$a$ is chosen in the completion
$M^{\sharp ,1}_{(\omega )}(\rr {2d+m})$
of
$M^{1,1}_{(\omega )}(\rr {2d+m})$
in $M^{\infty,1}_{(\omega )}
(\rr {2d+m})$,
then the map \eqref{Eq:IntroCont1}
is compact. (See Theorem 
\ref{Thm:Cont1Comp}.)

\par

We notice that we may choose $\omega$
such that $M^{\infty ,1}_{(\omega )}$
contain $S^r_{0,0}$.
In particular, as announced above, 
we put less restrictions on
the amplitudes compared to the assumptions
above adapted to hyperbolic problems.

\par

We remark that our results extend and generalize
related results in \cite{ToCoGa} in especially
two different ways.

\par

Firstly, we may choose the 
involved Young functions such that
$M^\Phi _{(\omega _j)}$
in \eqref{Eq:IntroCont1}
is equal to the classical modulation space
$M^p _{(\omega _j)}$, for any $p\in [1,\infty]$,
$j=1,2$.

\par

Secondly, in \eqref{Eq:IntroCont1} we allow
$\omega$, $\omega _j$ and $v$ to belong to
the large class $\mascP _s$ of weights which are moderate
by subexponential functions of degree $s^{-1}<1$, while
in \cite{ToCoGa} it is required that the involed
weights should belong to the smaller class $\mascP$
of \emph{polynomially} moderate weights.
By imposing these two significant restrictions, then
our Theorem \ref{Thm:Cont1} in Section \ref{sec2}, 
as well as \eqref{Eq:IntroCont1}, essentially
takes the form \cite[Theorem 2.2]{ToCoGa}.

\par

In similar ways, we deduce several other
continuity properties, including detailed compactness
properties, for Fourier integral
operators on Orlicz modulation spaces, which
cover the continuity 
results in \cite{Bu1,CT1,CT2,ToCoGa}.
For example, we investigate 
$\op _\fy (a)$ when the amplitude
$a$ satisfies conditions which are 
rather similar to norm estimates with respect
to the Orlicz modulation space
$M^{\Phi}_{(\omega )}(\rr {2d+m})$.
For suitable non-degeneracy conditions
on $\fy$, different compared to
\eqref{Eq:DetphiCond}, we show that
$$
\op _\fy (a):M^{\Phi ^*}_{(\omega _1)}(\rr d)
\to
M^{\Phi}_{(\omega _2)}(\rr d)
$$
is continuous.
(See Theorem \ref{Thm:Cont2} in
Section \ref{sec2}.)


\par

In Section \ref{sec3} we perform detailed
studies on compactness for Fourier integral
operators. Here we find necessary
conditions on the amplitudes in order for
corresponding Fourier integral operators
should belong to certain Orlicz Schatten-von
Neumann classes. We impose certain restrictions
on the amplitudes. In the first step we
assume that the amplitudes in
\eqref{Eq:FIntOp} are independent of the
$y$ variable. That is, they are of the
form
\begin{equation}
\label{Eq:FIntOp2}
\begin{aligned}
\op _\fy (a)f(x)
&=
(2\pi )^{-d}
\iint _{\rr {2d}}a(x,\zeta
)f(y)e^{i\fy (x,y,\zeta )}\, dyd\zeta ,
\\[1ex]
f&\in \mascS (\rr {d}),\quad x\in \rr {d}.
\end{aligned}
\end{equation}
Thereafter we consider a more general family of
Fourier integral operators, given by
\begin{equation}
\label{Eq:FIntOp3}
\begin{aligned}
\op _{A,\fy} (a)f(x)
&=
(2\pi )^{-d}
\iint _{\rr {2d}}a(x-A(x-y),\zeta
)f(y)e^{i\fy (x,y,\zeta )}\, dyd\zeta ,
\\[1ex]
f&\in \mascS (\rr {d}),\quad x\in \rr {d}.
\end{aligned}
\end{equation}
Again we deduce Orlicz Schatten-von Neumann properties
for such Fourier integral operators. In fact, for amplitudes $a$ in
\eqref{Eq:FIntOp2} (or more general \eqref{Eq:FIntOp3}), in
suitable weighted $M^\Phi$ classes, and suitable phase functions
$\fy$, we show that corresponding Fourier integral operators are
Schatten-von Neumann operators of order $\Phi$ from $M^2_{(\omega _1)}$
to $M^2_{(\omega _2)}$.

\par

Finally we remark that by choosing
$$
m=d
\quad \text{and}\quad
\fy (x,y,\zeta )\equiv \scal {x-y}\zeta ,
$$ 
then our Fourier integral
operator \eqref{Eq:FIntOp}
becomes the \emph{pseudo-differential operator}
\begin{equation}
\label{Eq:AmplPseudos}
\op (a)f(x)
=
(2\pi )^{-d}\iint _{\rr d}
a(x,y,\zeta )f(y)e^{i\scal{x-y}\zeta}
\, dyd\zeta .
\end{equation}
In particular, pseudo-differential operators
are special cases of Fourier integral operators. 
Furthermore, if
$A$ is a fixed real $d\times d$ matrix,
and instead $a\in \mascS '(\rr {2d})$ is an appropriate function or
distribution on $\rr {2d}$ instead of $\rr {3d}$, 
then pseudo-differential operators of the form
\begin{equation}
\label{Eq:APseudos}
\op _A(a)f(x)
=
(2\pi )^{-d}\iint _{\rr {2d}}
a(x-A(x-y),\xi )f(y)e^{i\scal{x-y}\xi}
\, dyd\xi ,
\end{equation}
can be considered as special case of operators in
\eqref{Eq:FIntOp3}, as well as in
\eqref{Eq:AmplPseudos}.

\par

On the other hand, by Fourier inversion formula
and kernel theorems it follows
that any continuous operator from
$\mascS (\rr d)$ to $\mascS '(\rr d)$
is given by \eqref{Eq:APseudos} for
a suitable choice of $a$. Consequently, the
\emph{set of operators} are in general not increased
by passing from the more restricted formulation
\eqref{Eq:APseudos} to the more general
formulations \eqref{Eq:FIntOp} via
\eqref{Eq:FIntOp3} or
\eqref{Eq:AmplPseudos}.

\par

The additional assumptions on the phase function
for the pseudo-differential operators
in \eqref{Eq:AmplPseudos} and \eqref{Eq:APseudos}
lead to more general continuity properties, compared
to what is possible for general Fourier integral
operators in \eqref{Eq:FIntOp}. For related 
continuity properties for pseudo-differential 
operators when acting on Orclicz modulation spaces,
see \cite{GuRaToUs,TofUst}. For some further
extensions to more general modulation spaces,
see \cite{GroRez,ToPfTe} and the references therein.
For earlier approaches restricted to classical
modulation spaces, see
e.{\,}g. \cite{CorNic2,CorRod,To7,To8} and the
references therein.

\par

\par

\section{Preliminaries}\label{sec1} 

\par

In the section we recall some basic facts on Gelfand-Shilov
spaces, Orlicz spaces, Orlicz modulation spaces,
pseudo-differential operators and Wigner distributions.
We also give some examples on Young functions,
Orlicz spaces and Orlicz modulation spaces. 
(See Examples \ref{Example:SpecYoungFunc} and
\ref{Ex:OrlModSpace1}.) Notice that
Young functions are fundamental in the definition of Orlicz
spaces and Orlicz modulation spaces).

\par

\subsection{Gelfand-Shilov spaces}\label{subsec-Gelfand-Shilov}

\par

For a real number $s>0$, the (standard Fourier invariant) Gelfand-Shilov
space $\mathcal S_{s}(\rr d)$
($\Sigma _{s}(\rr d)$) of Roumieu type (Beurling type)
consists of all $f\in C^\infty (\rr d)$ such that
\begin{equation}\label{gfseminorm}
\nm f{\mathcal S_{s,h}}
\equiv
\sup_{\substack{\alpha, \beta \in \mathbf N^d \\ x\in \rr d}}
\frac {|x^\beta \partial ^\alpha
f(x)|}{h^{|\alpha  + \beta |}(\alpha ! \beta !)^s}
\end{equation}
is finite for some $h>0$ (for every $h>0$). We equip
$\mathcal S_{s}(\rr d)$ ($\Sigma _{s}(\rr d)$) by the canonical inductive limit
topology (projective limit topology) with respect to $h>0$, induced by
the semi-norms defined in \eqref{gfseminorm}.

\par

We have
\begin{equation}\label{GSembeddings}
\begin{aligned}
\maclS _s (\rr d) &\hookrightarrow \Sigma _t(\rr d)
\hookrightarrow \maclS _t (\rr d)
\hookrightarrow
\mascS (\rr d)
\\[1ex]
&\hookrightarrow \mascS '(\rr d) 
\hookrightarrow  \maclS _t'(\rr d)
\hookrightarrow  \Sigma _t'(\rr d) \hookrightarrow
\maclS _s '(\rr d),
\quad \frac 12\le s<t,
\end{aligned}
\end{equation}
with dense embeddings.
Here $A\hookrightarrow B$ means that
the topological space $A$ is continuously embedded in the 
topological space $B$. We also have
$$
\maclS _s(\rr d)=\Sigma _t(\rr d)= \{ 0\} ,\qquad
s<\frac 12,\ t\le \frac 12.
$$

\par

The \emph{Gelfand-Shilov distribution spaces} $\maclS _s'(\rr d)$ 
and $\Sigma _s'(\rr d)$, of Roumieu and Beurling 
types respectively, are the (strong) duals of $\mathcal S_s(\rr d)$ 
and $\Sigma _s(\rr d)$, respectively. It follows that if
$\mathcal S_{s,h}'(\rr d)$ is the $L^2$-dual of
$\mathcal S_{s,h}(\rr d)$ and $s\ge \frac 12$
($s > \frac 12$),
then $\mathcal S_s'(\rr d)$
($\Sigma _s'(\rr d)$) can be identified
with the projective limit (inductive limit) of
$\mathcal S_{s,h}'(\rr d)$ with respect to $h>0$. It follows that
\begin{equation}\label{Eq:GSspacecond2}
\mathcal S_s'(\rr d) = \bigcap _{h>0}\mathcal S_{s,h}'(\rr d)
\quad \text{and}\quad \Sigma _s'(\rr d) =\bigcup _{h>0}
\mathcal S_{s,h}'(\rr d)
\end{equation}
for such choices of $s$ and $\sigma$, see
\cite{GS,Pil1,Pil3} for details.


\par

We let the Fourier transform
$\mathscr F$ be given by
$$
(\mathscr Ff)(\xi )= \widehat f(\xi )
\equiv
(2\pi )^{-\frac d2}\int _{\rr
{d}} f(x)e^{-i\scal  x\xi }\, dx,
\quad \xi \in \rr d,
$$
when $f\in L^1(\rr d)$. Here $\scal \cdo \cdo$
denotes the usual
scalar product on $\rr d$.
The Fourier transform $\mathscr F$ extends
uniquely to homeomorphisms on $\mathscr S'(\rr d)$,
$\maclS _s'(\rr d)$ and on $\Sigma _s'(\rr d)$.
Furthermore,
$\mascF$ restricts to
homeomorphisms on $\mathscr S(\rr d)$,
$\maclS _s(\rr d)$ and on $\Sigma _s (\rr d)$,
and to a unitary operator on $L^2(\rr d)$.
Similar facts hold true
with partial Fourier transforms in place of
Fourier transform.

\par

Let $\phi \in \mascS  (\rr d)$ be fixed.
Then the \emph{short-time
Fourier transform} $V_\phi f$ of $f\in \mascS '
(\rr d)$ with respect to the \emph{window function} $\phi$ is
the tempered distribution on $\rr {2d}$, defined by
\begin{align}
V_\phi f(x,\xi )
&=
\mascF (f \, \overline {\phi (\cdo -x)})(\xi ), \quad x,\xi \in \rr d.
\label{Eq:STFTDef}
\intertext{In some situations it is convenient to use
the small modification}
T_\phi f(x,\xi )
&=
\mascF (f((\cdo +x)) \, \overline {\phi})(\xi ), \quad x,\xi \in \rr d
\label{Eq:TTransfDef}
\end{align}
of $V_\phi f$. By a straight-forward change of
variables it follows that
$$
T_\phi f(x,\xi ) = e^{i\scal x\xi}V_\phi f(x,\xi )
$$
If $f ,\phi \in \mascS (\rr d)$, then it follows that
$$
V_\phi f(x,\xi ) = (2\pi )^{-\frac d2}\int _{\rr d} f(y)\overline {\phi
(y-x)}e^{-i\scal y\xi}\, dy, \quad x,\xi \in \rr d.
$$

\par

By \cite[Theorem 2.3]{Toft28} it follows that the definition of the map
$(f,\phi)\mapsto V_{\phi} f$ from $\mascS (\rr d) \times \mascS (\rr d)$
to $\mascS(\rr {2d})$ is uniquely extendable to a continuous map from
$\maclS _s'(\rr d)\times \maclS_s'(\rr d)$
to $\maclS_s'(\rr {2d})$, and restricts to a continuous map
from $\maclS _s (\rr d)\times \maclS _s (\rr d)$
to $\maclS _s(\rr {2d})$.
The same conclusion holds with $\Sigma _s$ in place of
$\maclS_s$, at each occurrence.

\par

In the following proposition we give characterizations of 
Gelfand-Shilov
spaces and their distribution spaces in terms of estimates
of the short-time Fourier transform.
We omit the proof since the first part follows from
\cite[Theorem 2.7]{GroZim}
and the second part from \cite[Theorem 2.5]{Toft28}.
See also \cite{CoPiRoTe10} for related results.
Here and in what follows, the notation
$A(\theta )\lesssim B(\theta )$, $\theta \in \Omega$,
means that there is a constant $c>0$ such that
$A(\theta )\le cB(\theta )$ holds
for all $\theta \in \Omega$. We also set
$A(\theta )\asymp B(\theta )$ when
$A(\theta )\lesssim B(\theta )\lesssim A(\theta )$.

\par

\begin{prop}\label{stftGelfand2}
Let $s\ge \frac 12$ ($s>\frac 12$), $\phi \in \maclS _s(\rr d)\setminus 0$
($\phi \in \Sigma _s(\rr d)\setminus 0$) and let $f$ be a
Gelfand-Shilov distribution on $\rr d$. Then the following is true:
\begin{enumerate}
\item $f\in \maclS _s (\rr d)$ ($f\in \Sigma_s(\rr d)$), if and only if
\begin{equation}\label{stftexpest2}
|V_\phi f(x,\xi )| \lesssim  e^{-r (|x|^{\frac 1s}+|\xi |^{\frac 1s})}, \quad x,\xi \in \rr d,
\end{equation}
for some $r > 0$ (for every $r>0$).
\item $f\in \maclS _s'(\rr d)$ ($f\in \Sigma _s'(\rr d)$), if and only if
\begin{equation}\label{stftexpest2Dist}
|V_\phi f(x,\xi )| \lesssim  e^{r(|x|^{\frac 1s}+|\xi |^{\frac 1s})}, \quad
x,\xi \in \rr d,
\end{equation}
for every $r > 0$ (for some $r > 0$).
\end{enumerate}
\end{prop}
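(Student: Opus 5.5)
We prove the two characterizations separately, working throughout with the seminorms \eqref{gfseminorm} and the description \eqref{Eq:GSspacecond2} of the dual spaces. We treat the Roumieu case in detail; the Beurling case is obtained by interchanging the quantifiers ``some $h$, $r$'' and ``every $h$, $r$'' at each step.

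\emph{Part (1), necessity.} Suppose $f\in \maclS _s(\rr d)$. By the continuity result \cite[Theorem 2.3]{Toft28} quoted above, $F=V_\phi f$ belongs to $\maclS _s(\rr {2d})$. The key step is the standard equivalence
\[
\sup _{\alpha ,\beta ,X}\frac{|X^\beta \partial ^\alpha F(X)|}{h^{|\alpha +\beta |}(\alpha !\beta !)^s}<\infty \ \text{for some } h
\iff
|F(X)|+|\widehat F(\Xi )|\lesssim e^{-r|X|^{1/s}}\ \text{and}\ \lesssim e^{-r|\Xi |^{1/s}}\ \text{for some } r .
\]
To get the forward direction, take $\alpha =0$, use $\inf _\beta \frac{h^{|\beta |}(\beta !)^s}{|X^\beta |}\lesssim e^{-r|X|^{1/s}}$ (Stirling's formula), and note that $|X|^{1/s}\asymp |x|^{1/s}+|\xi |^{1/s}$. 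This gives \eqref{stftexpest2}.

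\emph{Part (1), sufficiency.} Use the inversion formula $f=\|\phi \|_{L^2}^{-2}V_\phi ^*V_\phi f$. Then
\[
x^\beta \partial ^\alpha f(x)=\|\phi \|^{-2}(2\pi )^{-d/2}\iint V_\phi f(y,\eta )\, x^\beta \partial ^\alpha _x\bigl(\phi (x-y)e^{i\scal x\eta }\bigr)\,dyd\eta .
\]
Next:
\begin{itemize}
\item expand $x^\beta =((x-y)+y)^\beta$ and apply Leibniz' rule;
\item bound the factors $\partial ^\gamma \phi$ and $(x-y)^\delta$ using $\phi \in \maclS _s$;
\item absorb the factors $|y^{\beta '}|$ and $|\eta ^{\alpha '}|$ into the decay of $V_\phi f$ via $|y|^{k}e^{-\frac r2|y|^{1/s}}\lesssim h^k(k!)^s$.
\end{itemize}
Summing the binomial sums with $2^{|\alpha +\beta |}$ losses gives the Gelfand--Shilov estimate with a larger $h$.

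\emph{Part (2).} The direction ``$\Leftarrow$'' goes by duality. Bound $|\langle f,g\rangle |$ by writing $\langle f,g\rangle =\|\phi \|^{-2}\langle V_\phi f,V_\phi g\rangle$ and combining \eqref{stftexpest2Dist} with part (1) for $g$. In the Roumieu case $g$ has decay $e^{-r_0(\cdots )}$ for some $r_0$, and we choose $r<r_0$ in \eqref{stftexpest2Dist}; this is why the condition is required for \emph{every} $r$. The resulting integral converges and defines a continuous functional on each $\maclS _{s,h}$. The direction ``$\Rightarrow$'' uses $V_\phi f(x,\xi )=(2\pi )^{-d/2}\langle f,\overline{\phi (\cdo -x)}e^{-i\scal \cdo \xi }\rangle$ together with continuity of $f$ with respect to a seminorm $\nm \cdo {\maclS _{s,h}}$. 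Estimating $\nm {\phi (\cdo -x)e^{i\scal \cdo \xi }}{\maclS _{s,h}}\lesssim e^{r(|x|^{1/s}+|\xi |^{1/s})}$ by Leibniz' rule and the reverse Stirling bound $\sup _k \frac{|x|^k}{h^k(k!)^s}\lesssim e^{r|x|^{1/s}}$, with $r\to 0$ as $h\to \infty$, gives the claim.

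The main obstacle is the combinatorial bookkeeping in the sufficiency part of (1): the factorial estimates must close with a uniform $h$, and $(\alpha !)^s$ must be tracked through Leibniz sums. Alternatively, the whole statement may be cited directly from \cite{GroZim,Toft28}.
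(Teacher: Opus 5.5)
Your proof is correct, but it follows a different route from the paper. The paper gives no argument here: it only cites \cite[Theorem 2.7]{GroZim} for (1) and \cite[Theorem 2.5]{Toft28} for (2).

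You instead supply a self-contained direct argument. Necessity in (1) comes from the continuity of $V_\phi$ on $\maclS _s$ (already quoted in the paper) together with Stirling's bound. Sufficiency in (1) goes through the inversion formula $f=\nm \phi{L^2}^{-2}V_\phi ^*V_\phi f$ and Leibniz/binomial bookkeeping. Part (2) is obtained by duality against the seminorms $\nm \cdo{\maclS _{s,h}}$, via \eqref{Eq:GSspacecond2}. Your route shows explicitly how ``some/every $h$'' becomes ``some/every $r$''. The citation is shorter but hides this correspondence.

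Two points deserve an extra line in a written version.

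\emph{Uniform decay in (2) ``$\Leftarrow$''.} Continuity on each $\maclS _{s,h}$ needs the uniform estimate $|V_\phi g(x,\xi )|\le C_h\nm g{\maclS _{s,h}}e^{-r_0(h)(|x|^{1/s}+|\xi |^{1/s})}$, with $r_0(h)$ independent of $g$. In the Beurling case you also need $r_0(h)\to \infty$ as $h\to 0$, using $\phi \in \Sigma _s$. This is not immediate from the qualitative statement $V_\phi g\in \maclS _s(\rr {2d})$. It would require Grothendieck's factorization theorem, or, more simply, the same Leibniz estimate you already use in (2) ``$\Rightarrow$''.

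\emph{Identifying the constructed object with $f$.} In (1) sufficiency and in (2) ``$\Leftarrow$'' you build a function (respectively a functional) from $V_\phi f$. You then identify it with $f$ through the distributional inversion formula. This presupposes that $f$ lies a priori in a dual where $V_\phi f$ and that formula make sense. The ambiguity is already present in the statement's phrase ``Gelfand--Shilov distribution'', so it is not a flaw of your argument.
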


\par

In our investigations, also
compactly supported elements in
Gelfand-Shilov spaces appears. For
this reason we set
$$
\maclD _s (K)
\equiv
\maclS _s(\rr d)\bigcap \mascE '(K)
\quad \text{and}\quad
\maclD _{0,s} (K)
\equiv
\Sigma _s(\rr d)\bigcap \mascE '(K),
$$
when $K\subseteq \rr d$ is compact,
with topologies induced by the topologies
from $\maclS _s(\rr d)$ and $\Sigma _s(\rr d)$,
respectively. We also let
\begin{equation}
\label{Eq:CompSuppGS}
\maclD _s (\rr d)
=
\bigcup _{j=1}^\infty \maclD _s (K_j)
\quad \text{and}\quad
\maclD _{0,s} (\rr d)=\bigcup _{j=1}^\infty
\maclD _{0,s} (K_j),
\end{equation}
where 
$$
K_j=\sets {x\in \rr d}{|x|\le j}
$$
is the closed ball of radius $j$ with center at 
origin. We let the topologies of
$\maclD _s (\rr d)$ and
$\maclD _{0,s} (\rr d)$ be the inductive limit
topologies of $\maclD _s (K_j)$ and
$\maclD _{0,s} (K_j)$ with respect to $j$.

\par

It is well-known that if $s>1$, then
$$
\maclD _{0,s} (\rr d) \subseteq
\maclD _s (\rr d) \subseteq C_0^\infty (\rr d)
\subseteq \mascS (\rr d),
$$
with dense embeddings. (See e.{\,}g. Sections 1.3
and 8.4 in \cite{H}.) On the other
hand, if $s\le 1$, then
$$
\maclD _{0,s} (\rr d)
=
\maclD _s (\rr d)
=
\{ 0\} ,
$$
that is, the spaces in \eqref{Eq:CompSuppGS}
becomes trivial. This follows from the fact 
that for $s\le 1$, then all elements in
spaces in \eqref{Eq:CompSuppGS} are real
analytic functions.

\par

\subsection{Weight functions}\label{subsec1.2}

\par

A \emph{weight} or \emph{weight function} on $\rr d$ is a
positive function $\omega
\in  L^\infty _{loc}(\rr d)$ such that $1/\omega \in  L^\infty _{loc}(\rr d)$.
The weight $\omega$ is called \emph{moderate},
if there is a positive weight $v$ on $\rr d$ such that
\begin{equation}\label{moderate}
\omega (x+y) \lesssim \omega (x)v(y),\qquad x,y\in \rr d.
\end{equation}
If $\omega$ and $v$ are weights on $\rr d$ such that
\eqref{moderate} holds, then $\omega$ is also called
\emph{$v$-moderate}.
We note that \eqref{moderate}
implies that $\omega$ fulfills
the estimates
\begin{equation}\label{moderateconseq}
v(-x)^{-1}\lesssim \omega (x)\lesssim v(x),\quad x\in \rr d.
\end{equation}
We let $\mascP _E(\rr d)$ be the set of all moderate weights on $\rr d$.

\par

It can be proved that if $\omega \in \mascP _E(\rr d)$, then
$\omega$ is $v$-moderate for some $v(x) = e^{r|x|}$, provided the
positive constant $r$ is large enough (cf. \cite{Gro2.5}).
That is,
\eqref{moderate} implies
\begin{equation}\label{Eq:weight0}
\omega (x+y) \lesssim \omega(x) e^{r|y|}
\end{equation}
for some $r>0$. In particular, \eqref{moderateconseq} shows that
for any $\omega \in \mascP_E(\rr d)$, there is a constant $r>0$ such that
\begin{equation}\label{Eq:BoundWeights}
e^{-r|x|}\lesssim \omega (x)\lesssim e^{r|x|},\quad x\in \rr d.
\end{equation}

\par

We say that $v$ is
\emph{submultiplicative} if $v$ is even and
\eqref{moderate}
holds with $\omega =v$. In the sequel, $v$ and $v_j$ for
$j\ge 0$, always stand for submultiplicative weights if
nothing else is stated.

\par

For any $s>0$, we let $\mascP _s(\rr d)$ be the set 
of all weights $\omega$ on $\rr d$ such that 
\begin{equation}
\label{Eq:weightCondS}
\omega (x+y) \lesssim \omega(x) e^{r|y|^{\frac 1s}}
\end{equation}
holds for some $r>0$. In the same manner the
set $\mascP _{0,s}(\rr d)$ consists of all 
weights $\omega$ on $\rr d$ such that
\eqref{Eq:weightCondS} is true for \emph{every}
$r>0$.
We also let $\mascP (\rr d)$
be the set of all $\omega \in \mascP _E(\rr d)$
such that
$$
\omega (x+y) \lesssim \omega(x) (1+|y|)^r
$$
for some $r>0$.

\par

Evidently,
$$
\mascP (\rr d) \subseteq \mascP _{s_1}(\rr d) 
\subseteq \mascP _{0,s_2}(\rr d)
\subseteq \mascP _{s_2}(\rr d),
\qquad 0<s_1<s_2.
$$
On the other hand, in view of
\eqref{Eq:weightCondS} it follows that
all weights in
$\mascP _{0,s}(\rr d)$ and $\mascP _{s}(\rr d)$
are moderate. Hence \eqref{Eq:weight0} gives
$$
\mascP _{s_1}(\rr d) 
=
\mascP _{0,s_2}(\rr d)
=
\mascP _E(\rr d)
\quad \text{when}\quad
s_1\ge 1,\ s_2>1.
$$

\par

\subsection{Orlicz Spaces}\label{subsec1.3}

\par

We recall that a function $\Phi:[0,\infty ] \to
[0,\infty ]$ is called \emph{convex} if
\begin{equation*}
\Phi(s_1 t_1+ s_2 t_2)
\leq s_1 \Phi(t_1)+s_2\Phi(t_2),
\end{equation*}
when
$s_j,t_j\in \mathbf{R}$
satisfy $s_j,t_j \ge 0$ and
$s_1 + s_2 = 1,\ j=1,2$.

\par

\begin{defn}\label{Def:YoungFunc}
A function $\Phi _0$ from $[0,\infty ]$ to
$[0,\infty ]$
is called a \emph{Young function} if
the following is true:
\begin{enumerate}
\item $\Phi _0$ is convex;

\vrum

\item $\Phi _0(0)=0$;

\vrum

\item $\lim
\limits _{t\to\infty} \Phi _0(t)=\Phi _0(\infty )=\infty$.
\end{enumerate}
\end{defn}

\par

We observe that $\Phi _0$ and $\Phi$
in Definition \ref{Def:YoungFunc} might not
be continuous, because we permit
$\infty$ as function value. For example,
$$
\Phi (t)=
\begin{cases}
0,&\text{when}\ t \leq a
\\[1ex]
\infty ,&\text{when}\ t>a
\end{cases}
$$
is convex but discontinuous at $t=a$.

\par

It is clear that $\Phi _0$ and $\Phi$ in
Definition \ref{Def:YoungFunc} are
non-decreasing, because if $0\leq t_1\leq t_2$
and $s\in [0,1]$ is chosen such
that $t_1=st_2$ and $\Phi _0$ is the same as in
Definition \ref{Def:YoungFunc}, then
\begin{equation*}
    \Phi _0(t_1)=\Phi _0(st_2+(1-s)0)
    \leq s\Phi _0(t_2)+(1-s)\Phi _0(0)
    \leq \Phi _0(t_2),
\end{equation*}
since $\Phi _0(0) =0$ and $s\in [0,1]$. Hence every
Young function is increasing.

\par

\begin{defn}\label{Def:OrliczSpaces1}
Let $\Phi$ be a 
Young function and
let $\omega _0 \in \mascP _E(\rr d)$.
Then the Orlicz space
$L^{\Phi}_{(\omega_0)}(\rr d)$ consists
of all measurable functions
$f:\rr d \to \mathbf C$ such that
$$
\nm f{L^{\Phi}_{(\omega_0)}}
\equiv
\inf  \Sets{\lambda>0}
{\int_\Omega \Phi 
\left (
\frac{|f(x) \cdot \omega_0 (x)|}{\lambda}
\right )
\, dx\leq 1}
$$
is finite. Here $f$ and $g$ in $L^{\Phi}_{(\omega_0)}(\rr d)$
are equivalent if $f=g$ a.e.
\end{defn}

\par






\par

In most of our situations we assume that $\Phi$
and $\Phi _j$ above are Young functions. A few
properties for Wigner distributions in Section
\ref{sec2} are deduced when $\Phi$ and $\Phi _j$
are allowed to be 
Young functions. The reader
who is not interested of such general results may
always assume that all 
Young functions
should be Young functions.

\par

It is well-known that if $\Phi$ 
in Definition \ref{Def:OrliczSpaces1}
is a Young function, then the space
$L^{\Phi}_{(\omega _0)}(\rr d)$ and
$L^{\Phi _1,\Phi _2}_{(\omega )}(\rr {2d})$
is a Banach spaces (see e.{\,}g.
Theorem 3 of
III.3.2 and Theorem 10 of III.3.3 in \cite{RaoRen}).








We refer to \cite[Lemma 1.18]{TofUst}
for the proof of the following lemma.

\par

\begin{lemma}\label{T}
Let $\Phi , \Phi _j$ be 
Young functions, $j=1,2$,
$\omega_0, v_0 \in \mascP_E (\rr d)$
and $\omega, v \in \mascP_E (\rr dd)$ be such that $\omega_0$
is $v_0$-moderate and $\omega$ is $v$-moderate.
Then $L^{\Phi}_{(\omega_0)}(\rr d)$
are invariant under translations, and
$$
\Vert f(\cdo - x)\Vert_{L^\Phi _{(\omega_0)}}
\lesssim
\Vert f\Vert_{L^\Phi _{(\omega_0)}} v_0(x),
\quad
f\in L^\Phi _{(\omega_0)}(\rr d),\ x\in \rr d.
$$
\end{lemma}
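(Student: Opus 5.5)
The plan is to work directly with the Luxemburg norm in Definition \ref{Def:OrliczSpaces1}: move the translation onto the weight by a change of variables, and absorb the discrepancy between the two weights using moderateness together with convexity of $\Phi$.

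Fix $f\in L^\Phi _{(\omega _0)}(\rr d)$ and $x\in \rr d$, and let $C\ge 1$ be the constant in \eqref{moderate} for the pair $\omega _0,v_0$. For $\lambda >0$, substituting $y\mapsto y+x$ gives
$$
\int _{\rr d}\Phi \left ( \frac {|f(y-x)\omega _0(y)|}{\lambda}\right )\, dy
=
\int _{\rr d}\Phi \left ( \frac {|f(y)\omega _0(y+x)|}{\lambda}\right )\, dy .
$$
Moderateness gives $\omega _0(y+x)\le C\omega _0(y)v_0(x)$. Since every Young function is non-decreasing, as observed after Definition \ref{Def:YoungFunc}, the right-hand side is at most
$$
\int _{\rr d}\Phi \left ( \frac {C v_0(x)|f(y)\omega _0(y)|}{\lambda}\right )\, dy .
$$

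Now take any admissible $\mu >0$ for $f$, that is,
$$
\int _{\rr d}\Phi \left ( \frac {|f(y)\omega _0(y)|}{\mu}\right )\, dy\le 1 ,
$$
and put $\lambda =Cv_0(x)\mu$. Then the last integral is at most $1$, so $\lambda$ is admissible for $f(\cdo -x)$. Hence
$$
\nm {f(\cdo -x)}{L^\Phi _{(\omega _0)}}\le Cv_0(x)\mu .
$$
Taking the infimum over $\mu$ gives the stated estimate with constant $C$.

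Two degenerate points need a brief check. If $\nm f{L^\Phi _{(\omega _0)}}=0$, then every $\mu >0$ is admissible, so the argument still applies. If $\Phi$ takes the value $\infty$, monotonicity still holds in the extended sense, so nothing changes.

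Translation invariance follows at once, since the right-hand side is finite. Measurability of $f(\cdo -x)$ is clear, and a.e.\ equivalence classes are preserved. The case of $\omega ,v$ on $\rr {2d}$ and the mixed spaces $L^{\Phi _1,\Phi _2}_{(\omega )}$ mentioned in the statement is handled the same way, iterating the argument in each variable. There the only additional point is that the inner norm is translation-estimated uniformly, after which monotonicity of the outer Luxemburg functional is applied.

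I expect no real obstacle. The only step needing care is keeping $\lambda$ scaled by the factor $Cv_0(x)$, rather than trying to pull that factor out of $\Phi$, which convexity would not allow in general.
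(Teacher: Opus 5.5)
Your proof is correct. The substitution $y\mapsto y+x$, the moderateness bound $\omega_0(y+x)\le C\omega_0(y)v_0(x)$, monotonicity of $\Phi$ and the rescaling $\lambda=Cv_0(x)\mu$ of the Luxemburg parameter give the estimate with constant $C$. This is the standard argument; the paper gives no proof of its own and simply refers to \cite[Lemma 1.18]{TofUst}.

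For the mixed spaces you mention, do the inner step jointly via $\omega(y+x,\eta)\le C\omega(y,\eta-\xi)v(x,\xi)$, followed by translation invariance of the unweighted outer $L^{\Phi_2}$ norm. Translating one variable at a time would only give the weaker factor $v(x,0)v(0,\xi)$.
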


\par

In most situations we assume that the 
Young
functions should satisfy the $\Delta _2$-condition
(near origin), whose definition is recalled
as follows.

\par

\begin{defn}\label{Def:Delta2Cond}
Let $\Phi: [0,\infty ] \to [0,\infty]$ be a 
Young function.
Then $\Phi$ is said to satisfy the \emph{$\Delta_2$-condition}
if there exists a constant $C>0$ such that
\begin{equation}\label{Eq:Delta2Cond}
\Phi(2t) \leq C \Phi(t) 
\end{equation}
for every $t\in [0,\infty ]$. The 
Young function $\Phi$
is said to satisfy \emph{local $\Delta_2$-condition}
or \emph{$\Delta_2$-condition near origin}, if there are
constants $r>0$ and $C>0$ such that \eqref{Eq:Delta2Cond}
holds when $t\in [0,r]$.
\end{defn}

\par

\begin{rem}\label{Rem:Delta2Cond}
Suppose that $\Phi: [0,\infty ] \to [0,\infty]$
is a 
Young function which satisfies \eqref{Eq:Delta2Cond}
when $t\in [0,r]$ for some constants $r>0$ and $C>0$.
Then it follows by straight-forward arguments that
there is a 
Young function $\Phi _0$ (of the same order)
which satisfies the $\Delta _2$-condition
(on the whole $[0,\infty $), and such that
$\Phi _0(t)=\Phi (t)$ when $t\in [0,r]$).
\end{rem}

\par

Several duality properties for Orlicz spaces
can be described in terms of Orlicz spaces
with respect to Young conjugates, given in
the following definition.

\par

\begin{defn}\label{Def:ConjYoungFunc}
Let $\Phi$ be a Young function. Then
the conjugate Young function
$\Phi ^*$ is given
by
\begin{equation}\label{eq-YoungIneq-conjugate}
\Phi ^*(t)
\equiv
\begin{cases}
{\displaystyle{\sup _{s\ge 0} (st - \Phi(s)),}} &
\text{when}\ t \in [0,\infty ),
\\[2ex]
\infty , &
\text{when}\ t=\infty .
\end{cases}
\end{equation}
\end{defn}

\par

\begin{rem}\label{Rem:PhiLeb}
Let $p\in [1,\infty ]$, and set
$\Phi _{[p]}(t)= t^p$ when $p \in (0,\infty)$,
and
$$
\Phi _{[\infty ]} (t) =
\begin{cases}
0, & t \leq 1,
\\[1ex]
\infty ,&  t >1.
\end{cases}
$$
Then
$L^{\Phi _{[p]}}(\rr d)$ and its 
norm is equal to the
classical Lebesgue space $L^p(\rr d)$ and its 
norm.

\par

Moreover, suppose
$p_1,p_2\in [1,\infty]$ and let
$$
\Phi (t)
=
\begin{cases}
{\frac {t^{p_2}}{p_2}}, & 0\leqslant t\leqslant 1,
\\[1ex]
{\frac {t^{p_1}}{p_1} +\frac 1{p_2}-\frac 1{p_1}},
& t> 1.
\end{cases}
$$
Here, we interpret
$\frac {t^\infty}{\infty}$
as
$$
\frac {t^\infty}{\infty}
\equiv
\lim _{p\to \infty}\frac {t^p}p
=
\begin{cases}
0, & 0\leqslant t \leqslant 1,
\\[1ex]
\infty , & t>1.
\end{cases}
$$
Then $\Phi$ is a Young function,
\begin{alignat*}{2}
L^\Phi (\rr d) &= L^{p_1}(\rr d)+L^{p_2}(\rr d),
&\quad p_1 &\leqslant p_2,
\intertext{and}
L^\Phi (\rr d)
&=
L^{p_1}(\rr d)\cap L^{p_2}(\rr d),&
\quad p_2 &\leqslant p_1.
\end{alignat*}
\end{rem}

\par





\begin{example}\label{Example:SpecYoungFunc}
The previous remark shows that sums and intersections
of Lebesgue spaces are special cases of Orlicz spaces.
Here we list some other choices of Young functions
which give rise to Orlicz spaces, where not all of
them be described by Lebesgue spaces.
\begin{itemize}
\item Let
$$
\Phi (t) =
\begin{cases}
\tan t, & 0\le t <\frac \pi 2,
\\[1ex]
\infty , & t\ge \frac \pi 2.
\end{cases}
$$
It follows that $L^\Phi =L^1\cap L^\infty$.

\vrum

\item Let
$$
\Phi (t) =
\begin{cases}
0, & t=0,
\\[1ex]
-\frac t{\ln t}, & 0< t <1,
\\[1ex]
\infty , & t\ge 1,
\end{cases}
$$
Then the conjugate Young function is given by
$$
\Phi ^*(t)
=
\left ( 
t+\frac 12 -\sqrt {\frac 14+t}\, 
\right )
e^{-\frac 1t(\frac 12+\sqrt {\frac 14+t}\, )},
$$
when $t\ge 0$ is near origin.

\vrum

\item If $\Phi (t)=t\ln{(1+t)}$, 
then $\Phi ^*(t)\asymp
\cosh{(t)}-1$.

\vrum

\item If $\Phi (t)=\cosh{(t)}-1$, 
then $\Phi ^*(t)\asymp
t\ln{(1+t)}$.
\end{itemize}

\par

We observe that each one of these Young functions
gives rise to different Orlicz spaces.

\end{example}


We refer to \cite{OsaOzt,RaoRen,HaH} for more facts about
Orlicz spaces.

\par

\subsection{Orlicz modulation spaces}\label{subsec1.4}

\par

Let $\maclM (\rr d)$ be the set of all
(complex-valued) Lebesgue measurable functions
on $\rr {d}$.
For any $p,q\in [1,\infty ]$ and $\omega \in
\mascP _E(\rr {2d})$, the norm
$\nm \cdo {M^{p,q}_{(\omega )}}$ on $\maclM (\rr {2d})$
is given by
$$
\nm F{L^{p,q}_{(\omega)}}
\equiv
\nm {H_{F,\omega ,p}}{L^q},
\qquad
H_{F,\omega ,p}(\xi )
\equiv
\nm {F(\cdo ,\xi )\cdot \omega (\cdo ,\xi )}{L^{q}},
\qquad
F\in \maclM (\rr {2d}).
$$

The definition of classical and Orlicz modulation spaces
are given in the following.
(See also \cite{Fe4,Fei5} for first definition of 
classical and more general classes of modulation spaces.)

\par

\begin{defn}\label{Def:Orliczmod}
Let $f\in \Sigma _1(\rr d)$,
$\phi(x)
=
\pi ^{-\frac{d}{4}}e^{-\frac{|x|^2}{2}},
\ x\in \rr d$,
$p,q\in [1,\infty]$, $\omega \in \mascP _E(\rr {2d})$, 
and let $\Phi$ and $\Psi$ be 
Young functions.
\begin{enumerate}
\item The \emph{modulation space} $M^{p,q}_{(\omega)}(\rr d)$
consists of all $f\in \Sigma _1' (\rr d)$ such that
\begin{equation}\label{Eq:ClassicModNorm}
\nm f{M^{p,q}_{(\omega)}}
\equiv
\nm {V_\phi}{L^{p,q}_{(\omega )}},
\end{equation}
is finite.
The topology of $M^{p,q}_{(\omega)}(\rr d)$ is
given by the norm \eqref{Eq:ClassicModNorm}.

\vrum

\item The \emph{Orlicz modulation space}
$M^{\Phi}_{(\omega )} (\rr d)$
is the set of all $f\in \maclS _{1/2}' (\rr d)$
such that
\begin{equation}\label{Eq:OrlModNorm}
\nm f{M^{\Phi}_{(\omega )}}
\equiv
\nm {V_\phi f}{L^{\Phi}_{(\omega )}}
\end{equation}
is finite. The topology of
$M^{\Phi}_{(\omega )} (\rr d)$
is given by the norm in \eqref{Eq:OrlModNorm}.
\end{enumerate}
\end{defn}

\par

Beside these well-known families
of modulation spaces, we shall also
consider the modulation space
$M^{\sharp ,q}_{(\omega )}(\rr d)$,
which consists of all
$f\in M^{\infty ,q}_{(\omega )}(\rr d)$
such that
$$
\lim _{R\to \infty}\left (
\NM {\sup _{|x|\ge R}
|V_\phi f(x,\cdo )\omega (x,\cdo )|}
{L^q}=0,
\right )
$$
when $q$, $\phi$ and $\omega$ are the same
as in Definition \ref{Def:Orliczmod}.
We notice that
$M^{\sharp ,1}_{(\omega )}(\rr d)$
is a central modulation space in
\cite{FeGaPr}, and that several
invariance properties are deduced
in \cite{NaPfTeTo}. For example it is
here shown that the window function
$\phi$ can be any element in a suitable
weighted $M^{1,1}$ class. Furthermore,
in \cite{NaPfTeTo} the following result
is obtained.

\par

\begin{lemma}
\label{Lemma:ComplModSpace}
Let $\omega \in \mascP _E(\rr {2d})$
and $q\in [1,\infty )$.
Then $M^{\sharp ,q}_{(\omega )}(\rr d)$
is the completion of $\Sigma _1(\rr d)$
under the norm
$\nm \cdo{M^{\sharp ,q}_{(\omega )}}$.
\end{lemma}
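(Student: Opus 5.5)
We need to show two things: $\Sigma _1(\rr d)$ is dense in $M^{\sharp ,q}_{(\omega )}(\rr d)$, and $M^{\sharp ,q}_{(\omega )}(\rr d)$ is complete, i.e. a closed subspace of the Banach space $M^{\infty ,q}_{(\omega )}(\rr d)$.

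\emph{Closedness.} We first check that $M^{\sharp ,q}_{(\omega )}$ is closed in $M^{\infty ,q}_{(\omega )}$. Let $f_n\to f$ in $M^{\infty ,q}_{(\omega )}$ with $f_n\in M^{\sharp ,q}_{(\omega )}$. Put
$$
G_R(h)=\NM {\sup _{|x|\ge R}|V_\phi h(x,\cdo )\omega (x,\cdo )|}{L^q}.
$$
Then $G_R(f)\le G_R(f_n)+\nm {f-f_n}{M^{\infty ,q}_{(\omega )}}$, since $G_R(h)\le \nm h{M^{\infty ,q}_{(\omega )}}$ and $G_R$ is subadditive. Taking first $n$ large and then $R$ large gives $G_R(f)\to 0$. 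Completeness of $M^{\infty ,q}_{(\omega )}$, with $q\ge 1$, is standard.

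\emph{Inclusion.} Next, $\Sigma _1\subseteq M^{\sharp ,q}_{(\omega )}$. By Proposition \ref{stftGelfand2}, for $f\in \Sigma _1$ we have $|V_\phi f(x,\xi )|\lesssim e^{-r(|x|+|\xi |)}$ for every $r>0$. By \eqref{Eq:BoundWeights}, $\omega \lesssim e^{r_0|(x,\xi )|}$. Choosing $r>r_0$, the sup over $|x|\ge R$ is bounded by $e^{-(r-r_0)R}e^{-(r-r_0)|\xi |}$, whose $L^q$ norm tends to $0$.

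\emph{Density.} The main step is to show that $\Sigma _1$ is dense in $M^{\sharp ,q}_{(\omega )}$ in the $M^{\infty ,q}_{(\omega )}$ norm. We argue in two steps.
\begin{enumerate}
\item \emph{Localization in $x$.} Take $\chi \in \Sigma _1$ with $\chi(0)=1$ and $\widehat \chi$ compactly supported; this is possible only after relaxing to, e.g., $\chi$ in $\Sigma _s$ with $s>1$, and approximating later. Set $f_R=\chi(\cdo /R)f$. The STFT of $f_R$ is a convolution in $\xi$ of $V_\phi f$ with a narrowing kernel, up to a window change. We must show $f-f_R\to 0$ in $M^{\infty ,q}_{(\omega )}$. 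On $|x|\ge R'$ we use the $\sharp$-tail of $f$. On $|x|\le R'$, with $R\gg R'$, we use that $1-\chi(\cdo /R)$ is small near $x$, via the standard estimate $|V_\phi(gf)(x,\xi )|\le \big(|V_{\phi _1}f(x,\cdo )|*|V_{\phi _2}g(x,\cdo )|\big)(\xi )$ with $\phi =\phi _1\phi _2$. The weight is handled by $v$-moderateness, so the kernel must decay exponentially in $\xi$; this is why Gelfand–Shilov type cutoffs of order $\le 1$ (analytic) are needed rather than $C_0^\infty$. Since $\omega$ is only exponentially moderate, we take $\chi$ to be a Gaussian, which lies in $\maclS _{1/2}\subseteq \Sigma _1$.
\item \emph{Localization in $\xi$.} The function $f_R$ now has rapidly decaying STFT in $x$, so it lies in $M^{1,q}_{(\omega )}$-type spaces. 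We then mollify: $f_{R,\ep }=f_R*\psi _\ep$ with a Gaussian $\psi _\ep$, or equivalently multiply $\widehat {f_R}$ by a Gaussian $e^{-\ep |\xi |^2}$. This yields decay in $\xi$, and the dominated convergence theorem in $L^q$ ($q<\infty$) gives convergence in $\xi$. Combined with the $x$-decay, Proposition \ref{stftGelfand2} should place $e^{-\ep |D|^2}(e^{-|x|^2/R^2}f)$ in $\Sigma _1$, or at least in a space where a final Gaussian approximation suffices.
\end{enumerate}

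The main obstacle is step~1: proving uniform smallness on the compact region $|x|\le R'$. The exponential weights force exponentially decaying control of the convolution kernel $V(\chi(\cdo /R))$ in $\xi$. We also need that $\nm {V_{\phi _2}(1-\chi(\cdo /R))(x,\cdo )\, v(0,\cdo )}{L^1}\to 0$ uniformly for $|x|\le R'$, which holds for Gaussians by explicit computation. The restriction $q<\infty$ enters through dominated convergence in step~2.
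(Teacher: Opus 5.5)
The paper gives no proof of this lemma; it is quoted from \cite{NaPfTeTo}. Your argument therefore has to stand on its own, and it does. It is correct, by a route the paper does not take. Closedness of $M^{\sharp ,q}_{(\omega )}$ in the Banach space $M^{\infty ,q}_{(\omega )}$ via the seminorms $G_R\le \nm \cdo{M^{\infty ,q}_{(\omega )}}$ is fine. So is the inclusion $\Sigma _1(\rr d)\subseteq M^{\sharp ,q}_{(\omega )}(\rr d)$ via Proposition \ref{stftGelfand2} and \eqref{Eq:BoundWeights}. Density via the Gaussian regularization $f_{R,\ep}=e^{-\ep |D|^2}(e^{-|\cdo |^2/R^2}f)$ completes the argument. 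Your structural observations are right:
\begin{itemize}
\item The cutoff acts on the short-time Fourier transform as a convolution in $\xi$ alone. Hence $|x|\ge R'$ is controlled by the $\sharp$-tail and $|x|\le R'$ by the explicit Gaussian computation.
\item The multiplier acts, up to a change of Gaussian window, as a convolution in $x$ alone. After taking $\sup _x$ one gets $\nm {f_R-f_{R,\ep}}{M^{\infty ,q}_{(\omega )}}\lesssim \nm {F_Rh_\ep}{L^q}$. Here $F_R\in L^q$ is the $\xi$-profile of $f_R$, and $h_\ep$ (the weighted $L^1$-norm in $x$ of the multiplier's kernel) is uniformly bounded and tends to $0$ pointwise. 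This is exactly where $q<\infty$ enters.
\item For general $\omega \in \mascP _E$, exponential moderateness forces cutoffs with at least exponentially decaying short-time Fourier transforms. This rules out $C_0^\infty$ or $\maclD _{0,s}$ cutoffs.
\end{itemize}
A Gabor-frame argument in the spirit of the proof of Proposition \ref{Prop:KernelOrlSchatt} would be a natural alternative. It would pass to coefficients and use that finitely supported sequences are dense in the discrete $\sharp$-space. That route needs frames with windows and dual windows in $\Sigma _1$, plus boundedness of analysis and synthesis on the $\sharp$-spaces. Yours needs only the product formula for the short-time Fourier transform and Gaussian integrals.

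Three points should be tightened.

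(i) With $\phi =\phi _1\phi _2$, the tail you invoke in step 1 is $\sup _{|x|\ge R'}|V_{\phi _1}f\cdot \omega |$. However, $M^{\sharp ,q}_{(\omega )}$ is defined with the fixed Gaussian window. You have two options:
\begin{itemize}
\item Show that the $\sharp$-tail is window independent: use $|V_{\phi _1}f|\lesssim |V_\phi f|*|V_{\phi _1}\phi |$ and split the $y$-integral at $|y|=R/2$.
\item Take $\phi _1=\phi$, measure $f-f_R$ with the window $\phi \phi _2$, and use the standard window independence of the $M^{\infty ,q}_{(\omega )}$ norm.
\end{itemize}

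(ii) For the region $|x|\ge R'$ you need $\sup _{x\in \rr d,\, R\ge 1}\nm {V_{\phi _2}(1-\chi (\cdo /R))(x,\cdo )v(0,\cdo )}{L^1}<\infty$, not only smallness for $|x|\le R'$. Both follow from the explicit Gaussian formulas.

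(iii) Drop the opening suggestion of a $\chi$ with compactly supported $\widehat \chi$, and drop the hedge at the end. With a Gaussian cutoff and a Gaussian multiplier one gets $|V_\phi f_{R,\ep}(x,\xi )|\lesssim e^{-c(|x|^2+|\xi |^2)}$ for some $c>0$. This holds because the Gaussian factors dominate the at most exponential growth permitted by $1/\omega$. Hence $f_{R,\ep}\in \maclS _{1/2}(\rr d)\subseteq \Sigma _1(\rr d)$ by Proposition \ref{stftGelfand2} and \eqref{GSembeddings}.
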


\par

For convenience we set $M^{p,p}_{(\omega )}=M^p_{(\omega )}$.
We also set
$$
M^{\Phi}_{(\omega )}=M^{\Phi},
\quad
M^{p,q}_{(\omega )}=M^{p,q}
\quad \text{and}\quad
M^{p}_{(\omega )}=M^{p}
\quad \text{when $\omega =1$ everywhere}.
$$

\par

Evidently, in Definition \ref{Def:Orliczmod}, 
we may use the transform $T_\phi$ in \eqref{Eq:TTransfDef} 
instead of $V_\phi$

\par

Let $\Phi$ be 
Young functions, and let
$\Phi _{[p]}$ be the same as
in Remark \ref{Rem:PhiLeb} and
$\omega \in \mascP _E(\rr {2d})$.
Then evidently
\begin{alignat}{3}
M^{p}_{(\omega )}(\rr d)
&=
M^{\Phi}_{(\omega )}(\rr d) &
\quad &\text{when} & \quad
\Phi &=\Phi _{[p]}.
\label{Eq:ModSpOrlModSp1}
\end{alignat}

\par

Next we explain some basic properties of
Orlicz modulation spaces. The following
proposition shows that Orlicz modulation spaces
are completely determined by the behavior
of the 
Young functions near origin. We refer to
\cite[Proposition 5.11]{ToUsNaOz} for the proof.

\par

\begin{prop}\label{Prop:OrliczModInvariance}
Let $\Phi _j$
be Young functions
and $\omega \in \mascP_E(\rr d )$.
Then the following conditions are equivalent:
\begin{enumerate}
\item $M^{\Phi _{1}}_{(\omega )}(\rr d)\subseteq
M^{\Phi _{2}}_{(\omega )}(\rr d)$;

\vrum

\item for some $t_0>0$ it holds
$\Phi _{2} (t)\lesssim  \Phi _{1} (t)$
when $t\in [0, t_0]$.
\end{enumerate}
\end{prop}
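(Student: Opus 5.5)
The plan is to prove the two implications separately. For (2)$\Rightarrow$(1) the key ingredient is that short-time Fourier transforms of elements in $M^{\Phi}_{(\omega )}$ are bounded after multiplication by the weight. For (1)$\Rightarrow$(2) the plan is a closed graph argument followed by testing the resulting norm inequality on well-separated sums of time-frequency shifted Gaussians.

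\emph{(2)$\Rightarrow$(1).} First I show that $\nm {V_\phi f\cdot \omega}{L^\infty}\lesssim \nm f{M^{\Phi _1}_{(\omega )}}$.
\begin{itemize}
\item Since $\phi$ is the normalized Gaussian, the reproducing formula gives $|V_\phi f|\lesssim |V_\phi f|*|V_\phi \phi|$.
\item The weight $\omega$ is $v$-moderate with $v(X)=e^{r|X|}$, by \eqref{Eq:weight0}. Hence $|V_\phi f(X)\omega (X)|\lesssim \int |V_\phi f(Y)\omega (Y)|\, |V_\phi\phi (X-Y)|v(X-Y)\, dY$.
\item Apply Hölder's inequality for Orlicz spaces, $\int |gh|\le 2\nm g{L^{\Phi}}\nm h{L^{\Phi ^*}}$. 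The function $|V_\phi\phi|v$ is a Gaussian times an exponential, so it lies in $L^1\cap L^\infty\subseteq L^{\Phi _1^*}$. This gives the sup bound, with the supremum at most $C_0\nm f{M^{\Phi _1}_{(\omega )}}$.
\end{itemize}
Now let $F=V_\phi f\cdot \omega$ with $\nm f{M^{\Phi _1}_{(\omega )}}=1$. Choose $\lambda \ge C_0/t_0$ with $\lambda \ge 1$, so that $|F|/\lambda \le t_0$ everywhere. Then (2) and the fact that $\Phi _1$ is non-decreasing give $\int \Phi _2(|F|/\lambda )\le C\int \Phi _1(|F|/\lambda )\le C\int \Phi _1(|F|)\le C$. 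By convexity and $\Phi _2(0)=0$ we have $\Phi _2(t/\mu)\le \mu ^{-1}\Phi _2(t)$ for $\mu \ge 1$. Taking $\mu =\max (1,C)$ gives $\nm f{M^{\Phi _2}_{(\omega )}}\le \lambda \max (1,C)$, which proves (1) with a norm bound.

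\emph{(1)$\Rightarrow$(2).} Both spaces are Banach spaces continuously embedded in $\maclS _{1/2}'$. So the inclusion has closed graph and is therefore bounded, i.e. $\nm f{M^{\Phi _2}_{(\omega )}}\le C\nm f{M^{\Phi _1}_{(\omega )}}$.

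Next choose points $X_1,\dots ,X_N\in \rr {2d}$ with mutual distances at least $R$, where $R$ is large. For $c>0$ consider
$$
f=c\sum _{k=1}^N \omega (X_k)^{-1}e^{i\scal {\cdo}{\xi _k}}\phi (\cdo -x_k),\qquad X_k=(x_k,\xi _k).
$$
Up to unimodular factors, $V_\phi f$ is a sum of Gaussian bumps centred at the $X_k$. Since $\omega$ is moderate, $\omega (X)/\omega (X_k)$ is comparable to $1$ on the unit ball around $X_k$.

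\begin{itemize}
\item Lower bound: on the unit balls around the $X_k$ (volume $\kappa$) we have $|V_\phi f\cdot\omega |\gtrsim c$ when $R$ is large. Hence $\nm f{M^{\Phi}_{(\omega )}}\gtrsim c\,\tilde\Phi (N)$, where $\tilde\Phi (N)=\inf\sets{\lambda}{N\kappa\Phi (1/\lambda)\le 1}$. This is of the form $1/\Phi ^{-1}(1/(\kappa N))$, understood via the generalized inverse.
\item Upper bound: $|V_\phi f\cdot\omega |\lesssim c\sum _k e^{-|X-X_k|^2/8}$ after absorbing $v$. Dominate this by $c$ times a sum of indicators of dilated balls plus a small tail. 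The tail must be controlled in $L^{\Phi}$ without a $\Delta _2$-condition. For this I would use convexity together with the triangle inequality over dyadic shells around each $X_k$; the shell masses decay like Gaussians and dominate the polynomial growth of the number of shell-cubes.
\end{itemize}

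Combining the two bounds gives $1/\Phi _2^{-1}(1/N')\lesssim 1/\Phi _1^{-1}(1/N)$ for comparable $N,N'$. That is, $\Phi _1^{-1}(s)\lesssim \Phi _2^{-1}(Cs)$ for $s=1/N$ small. Monotonicity lets this pass from $s=1/N$ to all small $s$, and inverting and using convexity once more gives $\Phi _2(t)\lesssim \Phi _1(t)$ near $0$. An alternative route, if the discretization is awkward, is to replace point sums by $\Phi$-level sets built from long sums; either way the essential comparison is the same.

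The main obstacle is this last part: obtaining two-sided Orlicz-norm estimates for the separated Gaussian sums that are uniform in $N$ and robust without any $\Delta _2$-assumption. The cross-terms and tails must not affect the $\Phi ^{-1}(1/N)$ behaviour, and the moderate weight must be handled via the normalization $\omega (X_k)^{-1}$.
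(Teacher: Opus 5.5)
Your proof of (2)$\Rightarrow$(1) is fine. There is one cosmetic point: if $\Phi _1$ is not left-continuous, then $\nm F{L^{\Phi _1}}=1$ only gives $\int \Phi _1(|F|/\lambda )\le 1$ for $\lambda >1$, so take $\lambda \ge 2$.

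In (1)$\Rightarrow$(2), the closed graph step is correct. So is your two-sided estimate $\nm f{M^\Phi _{(\omega )}}\asymp c\,\nm {\chi _{E_N}}{L^\Phi}$ for the separated Gaussian sums, with $E_N$ the union of the unit balls; the dyadic-shell argument works using only convexity. What this yields is $\nm {\chi _{E}}{L^{\Phi _2}}\lesssim \nm {\chi _E}{L^{\Phi _1}}$, i.e. $\Phi _1^{-1}(s)\lesssim \Phi _2^{-1}(s)$ for small $s$. Inverting this gives only
\[
\Phi _2(t)\le \Phi _1(Ct),\qquad 0\le t\le t_0 ,
\]
for some $C\ge 1$. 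Your last step, ``inverting and using convexity once more gives $\Phi _2(t)\lesssim \Phi _1(t)$'', fails. Convexity gives $\Phi _1(Ct)\ge C\Phi _1(t)$, which is the wrong direction. The bound $\Phi _1(Ct)\lesssim \Phi _1(t)$ near $0$ is exactly a local $\Delta _2$-condition on $\Phi _1$, and that is not assumed.

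This is not a repairable technicality. Since $\nm F{L^{\Phi (c\cdo )}}=c\,\nm F{L^\Phi}$, hypothesis (1) and every norm inequality you can extract from it are invariant under $\Phi _1\mapsto \Phi _1(c\cdo )$. The literal condition (2) is not invariant under this dilation, since $\lesssim$ means $\le$ up to a multiplicative constant, as defined in the paper.

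For a concrete counterexample, let $\Psi$ be the Young function with $\Psi (t)=e^{-1/t}$ for $0<t\le \frac 14$ (it is convex there), extended affinely for $t>\frac 14$. Put $\Phi _1=\Psi$ and $\Phi _2=\Psi (2\cdo )$. Then $\nm \cdo{M^{\Phi _2}_{(\omega )}}=2\nm \cdo{M^{\Phi _1}_{(\omega )}}$, so (1) holds. However, $\Phi _2(t)/\Phi _1(t)=e^{1/(2t)}\to \infty$ as $t\to 0^+$, so (2) fails.

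So (1)$\Rightarrow$(2) as stated needs one of two repairs:
\begin{enumerate}
\item add the hypothesis that $\Phi _1$ satisfies the $\Delta _2$-condition near the origin, in which case your argument closes; or
\item read (2) in the dilated form $\Phi _2(t)\lesssim \Phi _1(ct)$, $t\in [0,t_0]$, for some $c>0$.
\end{enumerate}
The dilated form is precisely what your argument proves. The converse (2)$\Rightarrow$(1) for the dilated form also follows from your argument, because $M^{\Phi _1(c\cdo )}_{(\omega )}=M^{\Phi _1}_{(\omega )}$.

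The paper gives no proof of its own here; it only refers to the literature. So there is no argument in the paper that avoids this obstruction.
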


\par

The next two proposition show some other convenient
properties concerning norm invariance and
duality for Orlicz modulation spaces.
We refer to Section 4 in \cite{FG1} for their proofs.
For an exposition with Orlicz spaces in focus, see
\cite{ToUsNaOz}. In the unweighted case, some of the 
properties also follows from \cite{SchFuh}.
In the first proposition on norm invariance, we
also remark that these properties hold for a significantly
broader family of modulation spaces which also includes
more general quasi-Banach spaces (see
\cite{ToPfTe}). For classical modulation spaces,
the results can be found in Chapters 11 and 12 in
\cite{Gc2}.

\par

\begin{prop}
\label{Prop:NormInvModSp}
Let $\Phi$ be a Young function,
$\omega ,v\in \mascP  _{E}(\rr {2d})$ be such that
$\omega$ is $v$-moderate,
$\phi \in M^1_{(v)}(\rr d)\setminus 0$, and let
$f\in \Sigma _1'(\rr d)$. Then
$f\in M^\Phi _{(\omega )}(\rr d)$, if and only if
\begin{equation}
\label{Eq:ModNorm}
\nmm f \equiv \nm {V_\phi f\cdot \omega}{L^\Phi}
\end{equation}
is finite. Furthermore, $\nmm \cdo$ defines a norm
on $M^\Phi _{(\omega )}(\rr d)$ which is equivalent to
$\nm \cdo {M^\Phi _{(\omega )}}$.
\end{prop}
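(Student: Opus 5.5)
The plan is the standard change-of-window argument, with the Lebesgue norm inequalities replaced by their Orlicz counterparts. Let $\phi _0(x)=\pi ^{-\frac d4}e^{-\frac{|x|^2}2}$ be the Gaussian window of Definition \ref{Def:Orliczmod}, and let $\phi \in M^1_{(v)}(\rr d)\setminus 0$ be the given window. The goal is two pointwise convolution estimates,
\begin{equation*}
|V_{\phi}f| \lesssim |V_{\phi _0}f|*|V_{\phi}\phi _0|
\quad \text{and}\quad
|V_{\phi _0}f| \lesssim |V_{\phi}f|*|V_{\phi _0}\phi | ,
\end{equation*}
which hold for $f\in \Sigma _1'(\rr d)$. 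They follow from the reproducing identity: $V_{\phi _0}^*V_{\phi _0}$ is a multiple of the identity (times $\nm {\phi _0}{L^2}^2$), so $V_\phi f = \nm{\phi _0}{L^2}^{-2}\,V_\phi V_{\phi _0}^*V_{\phi _0}f$. One then uses that $|V_\phi (M_\xi T_x\phi _0)(y,\eta )|=|V_\phi \phi _0(y-x,\eta -\xi )|$. These are the classical estimates of Chapter 11 in \cite{Gc2}. Justifying them for general $f\in \Sigma _1'$ uses the continuity of $V_\phi$ on Gelfand–Shilov spaces quoted from \cite{Toft28}, together with $\phi ,\phi _0\in M^1_{(v)}$.

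Next I check that $V_\phi \phi _0\in L^1_{(v)}(\rr {2d})$ and $V_{\phi _0}\phi \in L^1_{(v)}(\rr {2d})$. Since $\phi _0\in \Sigma _1$, the first holds if $v$ has at most exponential growth, which is automatic for $v\in \mascP _E$ by \eqref{Eq:BoundWeights}; one may also check it directly by Gaussian decay of $V_\phi\phi _0$. For the second, note that $\nm {V_{\phi _0}\phi}{L^1_{(v)}}$ is exactly $\nm \phi{M^1_{(v)}}$ up to the choice of window. Then, because $\omega$ is $v$-moderate,
\begin{equation*}
(|F|*|G|)\,\omega \lesssim (|F|\omega )*(|G|v).
\end{equation*}

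The core step is a weighted Young inequality $\nm {H*G}{L^\Phi}\le \nm H{L^\Phi}\nm G{L^1}$. I would prove it via Minkowski's inequality for the Luxemburg norm: write $H*G=\int H(\cdo -z)G(z)\,dz$ and use convexity of $\Phi$ (Jensen with the probability measure $|G|\,dz/\nm G{L^1}$). This gives
\begin{equation*}
\int \Phi \Bigl( \frac{|H*G|}{\lambda \nm G{L^1}} \Bigr)
\le
\int \Phi \Bigl(\frac{|H|}{\lambda}\Bigr)
\end{equation*}
by translation invariance of Lebesgue measure. This works for any Young function, including ones taking the value $\infty$; Lemma \ref{T} is not even needed.

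Combining the steps, $\nm{V_\phi f\,\omega}{L^\Phi}\lesssim \nm{V_{\phi_0}f\,\omega}{L^\Phi}\nm{V_\phi\phi_0}{L^1_{(v)}}$ and the reverse bound. This yields both the equivalence of norms and the claim that $f\in M^\Phi_{(\omega )}$ if and only if \eqref{Eq:ModNorm} is finite. That $\nmm \cdo$ is a norm follows since $V_\phi$ is injective on $\Sigma _1'$ and the Luxemburg functional is a norm. The main obstacle is the rigorous justification of the pointwise convolution estimate for distributions $f\in\Sigma _1'$ with a window merely in $M^1_{(v)}$. Here I would first treat $\phi\in\Sigma _1$, where everything is a genuine absolutely convergent integral, and then pass to general $\phi$ by density of $\Sigma_1$ in $M^1_{(v)}$ together with the locally uniform bound of $V_\phi f$ in $\phi$.
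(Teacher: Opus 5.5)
\paragraph{Verdict.} Your analytic core is the standard change-of-window proof, and the paper gives no proof of its own (it refers to Section 4 of \cite{FG1} and Chapters 11--12 of \cite{Gc2}). The following steps are correct:
\begin{itemize}
\item the pointwise bounds from the reproducing formula;
\item the estimate $(|F|*|G|)\,\omega \lesssim (|F|\omega )*(|G|v)$;
\item your Jensen proof of $\nm{H*G}{L^\Phi}\le \nm H{L^\Phi}\nm G{L^1}$, which also works for Young functions attaining the value $\infty$.
\end{itemize}

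\paragraph{A wrong justification.} $V_\phi \phi _0\in L^1_{(v)}$ does not follow from $\phi _0\in \Sigma _1$. Moreover, $V_\phi \phi _0$ has no Gaussian decay when $\phi$ is merely in $M^1_{(v)}$. The correct reason is $|V_\phi \phi _0(x,\xi )|=|V_{\phi _0}\phi (-x,-\xi )|$ together with the evenness of $v$. So this is the same fact as $\phi \in M^1_{(v)}$.

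\paragraph{The genuine gap.} It sits in the step you flag as the main obstacle.

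\emph{The claimed bound is false in general.} A ``locally uniform bound of $V_\phi f$ in $\phi$'' over $M^1_{(v)}$ is the duality estimate $|V_\phi f(X)|\lesssim \nm f{M^\infty _{(1/v)}}\nm \phi{M^1_{(v)}}v(X)$. This requires $f\in M^\infty _{(1/v)}=(M^1_{(v)})'$. It fails for general $f\in \Sigma _1'$: such $f$ may grow exponentially (e.g.\ $f(x)=e^{x_1}$), while $\phi \in M^1_{(v)}$ need not decay exponentially. In that case $V_\phi f(x,\xi )$ is not even given by a convergent integral.

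\emph{The ``only if'' direction can be repaired}, in either of two ways.
\begin{itemize}
\item Show first that $M^\Phi _{(\omega )}\hookrightarrow M^\infty _{(\omega )}\hookrightarrow M^\infty _{(1/v)}$. This uses the Gaussian reproducing formula, H{\"o}lder's inequality between $L^\Phi$ and $L^{\Phi ^*}$, the inclusion $L^1\cap L^\infty \subseteq L^{\Phi ^*}$, and $1/v\lesssim \omega$.
\item Alternatively, note that $\phi \mapsto \int V_{\phi _0}f(X)\, V_\phi \big (e^{i\scal{\cdo}{\xi}}\phi _0(\cdo -x)\big )\, dX$ is continuous from $M^1_{(v)}$ into $L^\Phi _{(\omega )}\subseteq \Sigma _1'(\rr {2d})$. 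On $\Sigma _1$ it agrees with $\phi \mapsto V_\phi f$, which is continuous by \cite{Toft28}, so the two maps coincide.
\end{itemize}

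\emph{The ``if'' direction is not repaired by density.} Approximating $\phi$ by $\phi _n\in \Sigma _1$ gives nothing, because the hypothesis controls only $V_\phi f$ and says nothing about $V_{\phi _n}f$. Hence $|V_{\phi _0}f|\lesssim |V_\phi f|*|V_{\phi _0}\phi |$ remains unproved. You have two options:
\begin{itemize}
\item Work in $(M^1_{(v)})'$, as the cited sources do.
\item Change the synthesis window. Pick $\gamma \in \Sigma _1$ with $(\gamma ,\phi )_{L^2}\neq 0$. The identity $(V_\phi f,V_\gamma h)=(\gamma ,\phi )(f,h)$ holds for $f,\phi ,h\in \Sigma _1$. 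It extends to $f,\phi \in \Sigma _1'$ by the continuity from \cite{Toft28} and density.
\end{itemize}
For the second option, take $h$ to be the time-frequency shifts of $\phi _0$. This gives
$|V_{\phi _0}f(Y)|\lesssim |(\gamma ,\phi )|^{-1}\int |V_\phi f(X)|\,|V_\gamma \phi _0(X-Y)|\, dX$,
with $V_\gamma \phi _0\in \Sigma _1(\rr {2d})\subseteq L^1_{(v)}$. Your Orlicz--Young step then concludes the proof.
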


\par

\begin{prop}\label{Prop:BasicPropOrlModSp2}
Let $\Phi$ be a Young function, and let
$\omega \in \mascP  _{E}(\rr {2d})$.
Then the following is true:
\begin{enumerate}
\item[{\rm{(1)}}] the sesqui-linear form $( \cdo ,\cdo )_{L^2}$ on
$\Sigma _1(\rr d)$ extends to a continuous map from
$$
M^{\Phi}_{(\omega )}(\rr d)
\times
M^{\Phi ^*}_{(1/\omega )}(\rr d)
$$
to $\mathbf C$. This extension is unique when $\Phi$ and $\Psi$ fulfill
a local $\Delta _2$-condition. If $\nmm f = \sup |{(f,g)_{L^2}}|$, where
the supremum is
taken over all $b\in M^{\Phi ^*}_{(1/\omega )}(\rr d)$
such that
$\nm b{M^{\Phi ^*}_{(1/\omega )}}\le 1$, then
$\nmm {\cdot}$ and $\nm
\cdot {M^{\Phi}_{(\omega )}}$ are equivalent norms;

\vrum

\item[{\rm{(2)}}] if $\Phi$ and $\Psi$ fulfill
a local $\Delta _2$-condition, then $\Sigma _1(\rr d)$ is 
dense
in $M^{\Phi}_{(\omega )}(\rr d)$, and the dual space of
$M^{\Phi}_{(\omega
)}(\rr d)$ can be identified with
$M^{\Phi ^*}_{(1/\omega )}(\rr
d)$, through the form $(\cdo  ,\cdo )_{L^2}$.
Moreover,
$\Sigma _1(\rr d)$ is weakly dense in
$M^{\Phi ^*}_{(\omega )}(\rr
d)$.
\end{enumerate}
\end{prop}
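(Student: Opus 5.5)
We prove the two parts of Proposition~\ref{Prop:BasicPropOrlModSp2} in order: first the pairing and the norm characterization in (1), then density and duality in (2). The basic tool is to move everything to the short-time Fourier transform side, where the statements reduce to duality for weighted Orlicz spaces on $\rr {2d}$.

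For (1) we use the Gaussian window $\phi$ with $\nm \phi{L^2}=1$. By Moyal's identity,
$$
(f,g)_{L^2}=(V_\phi f,V_\phi g)_{L^2(\rr {2d})},
\qquad f,g\in \Sigma _1(\rr d).
$$
We write the right-hand side as $\int (V_\phi f\cdot \omega)\overline{(V_\phi g/\omega)}$. The Hölder inequality for Orlicz spaces, which follows from Young's inequality $st\le \Phi (s)+\Phi ^*(t)$ and holds with constant $2$ for the Luxemburg norms, then gives
$$
|(f,g)_{L^2}|\le 2\nm f{M^\Phi_{(\omega )}}\nm g{M^{\Phi ^*}_{(1/\omega )}} .
$$
This allows us to \emph{define} the extension on $M^\Phi_{(\omega)}\times M^{\Phi^*}_{(1/\omega)}$ by the integral $\int V_\phi f\,\overline{V_\phi g}$, which converges absolutely and is continuous. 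For the norm equivalence, the inequality $\nmm f\lesssim \nm f{M^\Phi_{(\omega)}}$ is immediate from the Hölder bound. For the reverse inequality we use the Orlicz norming property: $\nm F{L^\Phi}\asymp \sup |\int F\overline G|$, where the supremum is over $\nm G{L^{\Phi^*}}\le1$. We take $F=V_\phi f\cdot\omega$ and obtain an almost extremal $G$. We then set $g=V_\phi^*(G/\omega)$. By Proposition~\ref{Prop:NormInvModSp}, together with the boundedness of $V_\phi^*$ from $L^{\Phi^*}_{(1/\omega)}$ into $M^{\Phi^*}_{(1/\omega)}$ (a consequence of the reproducing kernel estimate $|V_\phi V_\phi^*H|\le |H|*|V_\phi\phi|$, Lemma~\ref{T}, and a Minkowski-type inequality for the Luxemburg norm), we get $\nm g{M^{\Phi^*}_{(1/\omega)}}\lesssim 1$ and $(f,g)_{L^2}=\int F\overline G/\omega\cdot\omega$. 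This shows $\nm f{M^\Phi_{(\omega)}}\lesssim \nmm f$.

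Uniqueness of the extension follows from density, which belongs to (2). Assume $\Phi$ satisfies a local $\Delta_2$-condition. By Proposition~\ref{Prop:OrliczModInvariance} and Remark~\ref{Rem:Delta2Cond} we may replace $\Phi$ by a Young function satisfying the global $\Delta_2$-condition without changing the space. Then $L^\Phi_{(\omega)}(\rr {2d})$ has absolutely continuous norm, so the truncations $F_R=\chi_{B_R}V_\phi f$ converge to $V_\phi f$ in $L^\Phi_{(\omega)}$ by dominated convergence in the modular. It follows that $f_R=V_\phi^*F_R\to f$ in $M^\Phi_{(\omega)}$. Each $f_R$ is approximated by elements of $\Sigma_1$: we discretize the compactly supported integral $V_\phi^*F_R$ into finite linear combinations of time–frequency shifts of $\phi$, using Riemann sums and the continuity of $(x,\xi)\mapsto$ the shifted Gaussian in $M^\Phi_{(\omega)}$. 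For the duality statement, a functional $\ell$ on $M^\Phi_{(\omega)}$ induces $\ell\circ V_\phi^*$ on $L^\Phi_{(\omega)}$. Since $\Phi$ is $\Delta_2$, the Orlicz dual of this space is $L^{\Phi^*}_{(1/\omega)}$, so $\ell\circ V_\phi^*$ is represented by some $H$. Setting $g=V_\phi^*H$ and using $V_\phi^*V_\phi=\mathrm{id}$ gives $\ell(f)=(f,g)$, and (1) gives the norm equivalence. Weak density of $\Sigma_1$ in $M^{\Phi^*}_{(\omega)}$ then follows from this duality and the truncation/discretization argument applied in the weak$^*$ sense.

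The main obstacle is the boundedness of $V_\phi^*$ (equivalently, of twisted convolution with $|V_\phi\phi|$) on weighted Orlicz spaces when $\omega$ is only in $\mascP_E$. Here $v$ may grow exponentially, so one must check that $|V_\phi\phi|v$ is integrable. For the Gaussian this holds because $|V_\phi\phi|$ decays like a Gaussian. The convolution bound on $L^\Phi_{(\omega)}$ then follows from Lemma~\ref{T}, by writing the convolution as a Bochner integral of translates and applying the triangle inequality for the Luxemburg norm.
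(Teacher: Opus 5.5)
Your argument is the explicit Gaussian-window version of the retract (coorbit) argument. The paper does not carry it out; it refers instead to Section~4 of \cite{FG1} and to \cite{ToUsNaOz}. In that argument, $V_\phi$ and $V_\phi^*$ identify $M^\Phi_{(\omega)}(\rr d)$ with the range of the projection $V_\phi V_\phi^*$ on $L^\Phi_{(\omega)}(\rr{2d})$. Hölder's inequality, the norming property and $(L^\Phi)'=L^{\Phi^*}$ are then transported from the Orlicz spaces. The outline is sound, but several steps are wrong as written.

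First, in the norming step you must take $g=V_\phi^*(G\omega)$, not $V_\phi^*(G/\omega)$. The function $G/\omega$ lies in $L^{\Phi^*}_{(1/\omega)}$ only if $G/\omega^2\in L^{\Phi^*}$, and it would give $(f,g)=\int F\overline G\,\omega^{-2}$. With $H=G\omega$ you get $(f,g)=\int V_\phi f\,\overline H=\int F\overline G$. This still requires justifying $\int V_\phi f\,\overline{V_\phi V_\phi^*H}=\int V_\phi f\,\overline H$, by Fubini together with $V_\phi V_\phi^*V_\phi f=V_\phi f$.

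Second, in part (1) $\Phi^*$ is arbitrary; for $\Phi(t)=t$ it is $\Phi_{[\infty]}$. Then $w\mapsto H(\cdot-w)$ is not strongly measurable, so the convolution is not a Bochner integral of translates. Use Jensen's inequality instead: it gives $\nm{|H|*K}{L^\Phi}\le\nm K{L^1}\nm H{L^\Phi}$ for every $K\ge 0$ and every Young function. Then move the weight using the $v$-moderateness of $\omega$ and $1/\omega$.

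Third, uniqueness does not follow from density of $\Sigma_1$ in $M^\Phi_{(\omega)}$ alone. Suppose $\Sigma_1$ is not norm dense in $M^{\Phi^*}_{(1/\omega)}$, as again for $\Phi(t)=t$. Take $\lambda\neq0$ annihilating $\Sigma_1$ and any $\mu\ne 0$ on $M^\Phi_{(\omega)}$. Then $(f,g)_{L^2}+\mu(f)\overline{\lambda(g)}$ is another jointly continuous extension. You need norm density in both factors, which is what the hypothesis on ``$\Phi$ and $\Psi$'' (read $\Psi=\Phi^*$) has to supply.

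Finally, the local $\Delta_2$ condition of Definition~\ref{Def:Delta2Cond} holds trivially when $\Phi$ vanishes on some $[0,t_0]$. In that case $M^\Phi_{(\omega)}=M^\infty_{(\omega)}$ and density fails. Remark~\ref{Rem:Delta2Cond} also cannot produce a globally $\Delta_2$ replacement, since a globally $\Delta_2$ Young function is positive on $(0,\infty)$. So your density, duality and uniqueness arguments must assume $\Phi$ positive near the origin, and also $\Phi^*$ for uniqueness. The paper itself separates this degenerate case in the proof of Theorem~\ref{Thm:Cont1}.
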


\par






\begin{rem}
We notice that the weight classes play important
roles for the sizes of modulation spaces. More
precisely, let $\Phi$ be a Young function and
$\omega$ be a weight on $\rr {2d}$. Then
\begin{alignat*}{4}
\Sigma _s(\rr d)
&\hookrightarrow
M^\Phi _{(\omega )}(\rr d)
\hookrightarrow
\Sigma _s'(\rr d),&
\quad &\text{when}& \quad
\omega &\in \mascP _s(\rr {2d}),&
\quad s&\in (0,1],
\\[1ex]
\maclS _s(\rr d)
&\hookrightarrow
M^\Phi _{(\omega )}(\rr d)
\hookrightarrow
\maclS _s'(\rr d),&
\quad &\text{when}& \quad
\omega &\in \mascP _{0,s}(\rr {2d}),&
\quad s&\in (0,1],
\intertext{and}
\mascS (\rr d)
&\hookrightarrow
M^\Phi _{(\omega )}(\rr d)
\hookrightarrow
\mascS '(\rr d),&
\quad &\text{when}& \quad
\omega &\in \mascP (\rr {2d}).&&
\end{alignat*}
These embeddings are narrow in the sense
\begin{alignat*}{3}
\bigcap _{\omega \in \mascP _s}
M^\Phi _{(\omega )}(\rr d)
&=
\Sigma _s(\rr d),&
\qquad
\bigcup _{\omega \in \mascP _s}
M^\Phi _{(\omega )}(\rr d)
&=
\Sigma _s'(\rr d),&
\quad s&\in (0,1],
\\[1ex]
\bigcap _{\omega \in \mascP _{0,s}}
M^\Phi _{(\omega )}(\rr d)
&=
\maclS _s(\rr d),&
\qquad
\bigcup _{\omega \in \mascP _{0,s}}
M^\Phi _{(\omega )}(\rr d)
&=
\maclS _s'(\rr d),&
\quad s&\in (0,1],
\intertext{and}
\bigcap _{\omega \in \mascP}
M^\Phi _{(\omega )}(\rr d)
&=
\mascS (\rr d),&
\qquad
\bigcup _{\omega \in \mascP}
M^\Phi _{(\omega )}(\rr d)
&=
\mascS '(\rr d).& &
\end{alignat*}
(See e.{\,}g. \cite{ToUsNaOz,ToPfTe}.)
\end{rem}

\par

\begin{example}\label{Ex:OrlModSpace1}
Let $\phi \in \mascS (\rr d)\setminus 0$, and let
$\Phi$ be convex on $[0,\infty )$ such that
$$
\Phi (t)=-t^2\ln t,\quad \text{when}\quad
t\in [0,e^{-\frac 23}].
$$
Then $\Phi$ is a Young function.
The entropy functional
\begin{equation}
\label{Eq:EntropyFunc}
E_\phi (f)
\equiv
-\iint _{\rr {2d}}|V_\phi f(x,\xi )|^2
\ln |V_\phi f(x,\xi )|^2\, dxd\xi 
\end{equation}
is appears when investigating kinetic energy in
statistical physics and quantum physics, see e.{\,}g.
\cite{LieSol}.

\par

It is proved in \cite{GuRaToUs} that the following
\begin{enumerate}
\item The space $M^\Phi (\rr d)$ is close to $M^2(\rr d)$ in
the sense of the continuous inclusions
\begin{equation*}
M^p(\rr d)\subseteq M^\Phi (\rr d) \underset{\text{dense}}
\subseteq M^2(\rr d),
\qquad p<2.
\end{equation*}

\vrum

\item The functional $E_\phi $ is continuous on
$M^\Phi (\rr d)$, but fails to be continuous on
$M^2(\rr d)$.
\end{enumerate}
As a consequence of (1) and (2) one has that
$E_\phi$ is continuous on $M^p(\rr d)$ when $p<2$,
which seems not to be known before \cite{GuRaToUs}. 
\end{example}

\par

\subsection{Kernel operators and Schatten-von Neumann classes}

\par

For any topological vector spaces, $V_1$ and $V_2$, we let
$\maclL (V_1,V_2)$ be the set of all linear and continuous
operators from $V_1$ to $V_2$. Suppose
$V_1=\Sigma _1 (\rr {d_1})$,
$V_2=\Sigma _1 '(\rr {d_2})$ and
$K\in \Sigma _1 '(\rr {d_2}\times \rr {d_1})$.
Then it follows that the map $T_K$, defined by
$$
\scal {T_Kf}g = \scal K{g\otimes f},
\qquad f\in \Sigma _1 (\rr {d_1}),
\ g\in \Sigma _1(\rr {d_2}),
$$
belongs to $\maclL (V_1,V_2)$.
By the kernel theorem
of Schwartz it follows that the map
$K\mapsto T_K$, from
$\Sigma _1'(\rr {d_2}\times \rr {d_1})$ to
$\maclL (V_1,V_2)$ is bijective. For convenience
we let the topology of $\maclL (V_1,V_2)$ 
be inherited from the topology of
$\Sigma _1 '(\rr {d_2}\times \rr {d_1})$.

\par

In what follows we recall some facts on Schatten-von Neumann operators,
given in \cite{Sim}. Let $T$ be a linear and continuous map from the
Hilbert space $\maclH _1$
into the Hilbert space $\maclH _2$, {and let $j\ge 1$ be an integer.
Also let
$\mascI _{0,j}(\maclH _1,\maclH _2)$ be the set of all linear and
continuous operators from $\maclH _1$ to $\maclH _2$ with rank
at most $j-1$.
The \emph{singular value} of $T$ of order $j$
is defined by
$$
\sigma _j(T)=\sigma _j(T;\maclH _1,\maclH_2)
\equiv
\inf _{T_0\in \mascI _{0,j}}\nm {T-T_0}{\maclH _1\to \maclH _2},
\qquad j\in \mathbf Z_+ .
$$
Evidently
$\sigma _j(T;\maclH _1,\maclH_2)$ decreases with $j$, and
$\sigma _1(T;\maclH _1,\maclH_2)$ is equal to the operator
norm $\nm T{\maclH _1\to \maclH _2}$ of $T$.

\par

Throughout the paper, all Hilbert spaces are assumed to be separable,
and observe that this is always the case for Hilbert spaces which
are continuously embedded in $\mascS '(\rr d)$,
in view of \cite[Proposition 1.2]{RaToVi}.
However, we note that most parts of what is described here also
hold when $\maclH _1$ and $\maclH _2$ are allowed to be non-separable.

\par

In the following definition we present a broad family of
Schatten-von Neumann classes.

\par

\begin{defn}\label{Def:GeneralSchattenClasses}
Let $\maclH _1$, $\maclH_2$ be Hilbert spaces,
$T$ be a linear operator from $\maclH _1$ to $\maclH _2$,
and let $\maclB \subseteq \ell _0'(\mathbf Z_+)$ be a Banach space.
\begin{enumerate}
\item The $\maclB$ Schatten-von Neumann norm of $T$ is given by
$$
\nm T{\mascI _\maclB}=\nm T{\mascI _\maclB (\maclH _1,\maclH _2)}
\equiv
\nm { \{ \sigma _j(T;\maclH_1,\maclH_2)\} _{j=1}^\infty}{\maclB}.
$$

\vrum

\item The $\maclB$ Schatten-von Neumann class
$\mascI _\maclB = \mascI _\maclB (\maclH _1,\maclH _2)$
consists of all linear and continuous operators $T$ from $\maclH _1$
to $\maclH _2$ such that
$\nm T{\mascI _{\maclB} (\maclH _1,\maclH _2)}$ is finite. 
The topology
of $\mascI _\maclB = \mascI _\maclB (\maclH _1,\maclH _2)$
is given through the norm
$\nm \cdo {\mascI _\maclB (\maclH _1,\maclH _2)}$.
\end{enumerate}
\end{defn}

\par

\begin{defn}
\label{Def:OrliczSchattenClasses}
Let $\Phi$ be a Young function, $p\in [1,\infty ]$
and let $\maclH _1$ and $\maclH _2$ be Hilbert spaces.
Then let
$$
\mascI _\Phi = \mascI _{\ell ^\Phi},
\quad
\mascI _p = \mascI _{\ell ^p}
\quad \text{and}\quad
\mascI _\sharp = \mascI _{\ell ^\sharp}.
$$
\begin{itemize}
\item
The space $\mascI _\Phi (\maclH _1,\maclH _2)$
is called the Orlicz Schatten-von Neumann class with respect to
$\Phi$, $\maclH _1$ and $\maclH _2$, or the $\Phi$-Schatten class.

\vrum

\item
The  space $\mascI _p (\maclH _1,\maclH _2)$ is called the
(classical) Schatten-von Neumann class with respect to 
$p$, $\maclH _1$ and $\maclH _2$, or the $p$-Schatten class.
\end{itemize}
\end{defn}

\par

We observe that $\mascI _p(\maclH _1,\maclH _2)$ increases with
$p$, and that $\mascI _\Phi (\maclH _1,\maclH _2)$ decreases with $\Phi$.
In fact, for the latter conclusion, it suffices to detect the decreasing property
with respect to $\Phi$
near origin, which is shown in the following proposition.
The result follows from the fact that similar properties hold
true for discrete Orlicz spaces. (See e.{\,}g. \cite{BaFuTo1,SchFuh}.)
The details are left for the reader.

\par

\begin{prop}\label{Prop:SchattenEmbed}
Let $\maclH _1$ and $\maclH _2$ be Hilbert spaces, and let
$\Phi _1$ and $\Phi _2$ be Young functions such that for some
$T>0$ it holds
$$
\Phi _2(t)\lesssim \Phi _1(t),
\quad \text{when}\quad
t\in (0,T].
$$
Then
$\mascI _{\Phi _1}(\maclH _1,\maclH _2)
\hookrightarrow
\mascI _{\Phi _2}(\maclH _1,\maclH _2)$.
\end{prop}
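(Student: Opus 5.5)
Since $\mascI _{\Phi _j}$ is defined through $\ell ^{\Phi _j}$ norms of the singular value sequence $\{\sigma _k(T)\} _{k\ge 1}$, the plan is to reduce the statement to the discrete embedding
$$
\ell ^{\Phi _1}(\mathbf Z_+)\hookrightarrow \ell ^{\Phi _2}(\mathbf Z_+),
\qquad \nm c{\ell ^{\Phi _2}}\le C\nm c{\ell ^{\Phi _1}},
$$
with $C$ independent of $c$. Applying this to $c=\{\sigma _k(T;\maclH _1,\maclH _2)\}_k$ gives $\nm T{\mascI _{\Phi _2}}\le C\nm T{\mascI _{\Phi _1}}$. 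Finiteness of the left side, together with continuity of $T$ (which is assumed in the definition of the classes), shows $T\in \mascI _{\Phi _2}$, and the norm estimate gives continuity of the inclusion.

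To prove the discrete embedding, suppose $\Phi _2(t)\le A\Phi _1(t)$ for $t\in (0,T]$, and we may assume $A\ge 1$. First I claim an $\ell ^\infty$ bound: $\nm c{\ell ^\infty}\le C_0\nm c{\ell ^{\Phi _1}}$. Indeed, if $\sum _k\Phi _1(|c_k|/\lambda )\le 1$, then $\Phi _1(|c_k|/\lambda )\le 1$ for every $k$. Since $\Phi _1$ is increasing and tends to $\infty$, there is $t_1<\infty$ with $\Phi _1(t)>1$ for $t>t_1$, so $|c_k|\le t_1\lambda$. (The degenerate case where $\Phi _1$ vanishes on an interval is fine, since $t_1$ still exists.)

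Now take $\lambda >\nm c{\ell ^{\Phi _1}}$ and set $\mu =M\lambda$ with $M=\max (1,t_1/T)\cdot A$. Then $|c_k|/\mu \le t_1/(MA^{-1}\cdot \dots )\le T$, so every entry lies in the region where the comparison holds. Using convexity and $\Phi (0)=0$, we have $\Phi _2(s/\kappa )\le \kappa ^{-1}\Phi _2(s)$ for $\kappa \ge 1$. Hence
$$
\sum _k\Phi _2(|c_k|/\mu )\le A\sum _k\Phi _1\bigl(|c_k|/\mu \bigr)\le A\cdot \tfrac 1{A}\sum _k\Phi _1(|c_k|/\lambda )\le 1,
$$
after choosing $M$ so that the factor $M\ge A\max(1,t_1/T)$ absorbs $A$ by convexity of $\Phi _1$. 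This gives $\nm c{\ell ^{\Phi _2}}\le M\nm c{\ell ^{\Phi _1}}$.

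The main obstacle is bookkeeping the constants so that one scaling simultaneously puts all entries into $(0,T]$ and absorbs the factor $A$. I would handle this by splitting the scaling as $\mu =\kappa _1\kappa _2\lambda$, where $\kappa _1=\max (1,t_1/T)$ handles the range and $\kappa _2=A$ handles the constant via $\Phi _1(s/A)\le A^{-1}\Phi _1(s)$. A minor point is when $\Phi _j$ takes the value $\infty$: the convexity inequalities remain valid in $[0,\infty ]$, and the comparison is only used at points where $\Phi _1$ is finite.
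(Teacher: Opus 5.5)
Your proposal is correct and follows the same route as the paper, which reduces the statement to the embedding $\ell^{\Phi_1}(\mathbf Z_+)\hookrightarrow\ell^{\Phi_2}(\mathbf Z_+)$ of discrete Orlicz spaces, applied to the singular value sequence; the paper cites the literature for that embedding and leaves the details to the reader, and your argument supplies them correctly via the $\ell^\infty$ bound $|c_k|\le t_1\lambda$ and the rescaling $\mu=A\max(1,t_1/T)\lambda$. Two bookkeeping points need tidying: the range estimate should read $|c_k|/\mu\le t_1/M\le T$, and the convexity inequality you actually use is $\Phi_1(s/\kappa)\le\kappa^{-1}\Phi_1(s)$ with $\kappa=M\ge A$, not the $\Phi_2$ version you stated.
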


%

\par

We notice that
$$
\mascI _1(\maclH _1,\maclH _2),
\quad
\mascI _2(\maclH _1,\maclH _2),
\quad \text{and}\quad
\mascI _\infty(\maclH _1,\maclH _2),
$$
are the spaces of trace-class, Hilbert-Schmidt, and linear
and continuous operators from $\maclH _1$ to $\maclH _2$, respectively,
also in norms.

\par

By straight-forward application of the spectral theorem, it follows
that the definition of $\mascI _p(\maclH )$, $\mascI _\Phi (\maclH )$
and their norms can be reached with other approaches, indicated in
the following.

\par

\begin{prop}
\label{Prop:OrlSchattTop}
Let $\maclH _1$ and $\maclH _2$ be Hilbert spaces,
and $\Phi$ be a Young function.
Then $\mascI _\Phi (\maclH _1,\maclH _2)$ is a Banach, and
\begin{equation}
\label{Eq:AltOrlSchattNorm}
\nm T{\mascI _\Phi}
=
\sup \nm { \{ (Tf_j,g_j)_{\maclH _2}\}_{j=1}^\infty}{\ell ^\Phi (\mathbf Z_+)},
\quad
T\in \mascI _\infty (\maclH _1,\maclH _2).
\end{equation}
Here the supremum is taken over all orthonormal sequences
$\{ f_j\} _{j=1}^\infty \in \ON (\maclH _1)$
and
$\{ g_j\} _{j=1}^\infty \in \ON (\maclH _2)$.
\end{prop}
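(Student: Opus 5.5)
The plan is to prove the norm identity \eqref{Eq:AltOrlSchattNorm} first and then derive the Banach property from it. Write $N_\Phi (T)$ for the right-hand side of \eqref{Eq:AltOrlSchattNorm}. Two facts about $\ell ^\Phi (\mathbf Z_+)$ will be used throughout.
\begin{enumerate}
\item It is rearrangement invariant and depends only on absolute values.
\item It has the Fatou property: $\nm a{\ell ^\Phi}=\lim _{N\to \infty}\nm {a\chi _{[1,N]}}{\ell ^\Phi}$.
\end{enumerate}
Both follow directly from the definition of the Luxemburg norm and monotone convergence.

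For the inequality $N_\Phi (T)\le \nm T{\mascI _\Phi}$, fix orthonormal sequences $\{ f_j\}$ and $\{ g_j\}$ and set $a_j=|(Tf_j,g_j)_{\maclH _2}|$. I will use the Ky Fan inequality, which holds for every bounded $T$:
$$
\sum _{j=1}^N a_j^*\le \sum _{j=1}^N \sigma _j(T),\qquad N\ge 1,
$$
where $a^*$ is the decreasing rearrangement of $a$. It can be proved from the min-max characterization of $\sigma _j$, or by a short duality argument with the rank-$N$ partial isometry $\sum _{j\le N}(\cdo ,f_j)g_j$. Hence $a^*$ is weakly majorized by $\sigma (T)$. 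Since $t\mapsto \Phi (t/\lambda )$ is convex and non-decreasing, the Hardy--Littlewood--P\'olya (Karamata) inequality gives
$$
\sum _{j\le N}\Phi (a_j^*/\lambda )\le \sum _{j\le N}\Phi (\sigma _j/\lambda )
$$
for every $\lambda >0$. Letting $N\to \infty$ gives $\nm a{\ell ^\Phi}\le \nm {\sigma (T)}{\ell ^\Phi}$. Taking the supremum over all orthonormal sequences gives $N_\Phi (T)\le \nm T{\mascI _\Phi}$.

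For the reverse inequality I will use the polar decomposition $T=U|T|$. If $T$ is compact, take the Schmidt decomposition $T=\sum _j\sigma _j(\cdo ,e_j)h_j$ and choose $f_j=e_j$, $g_j=h_j$. Then $(Tf_j,g_j)=\sigma _j$, which gives equality directly. The main obstacle is the non-compact case. Here I would fix $N$ and $\ep >0$ and use the spectral theorem for $|T|$ together with $\sigma _j(T)=\sigma _j(|T|)$ (the min-max values) to find orthonormal $f_1,\dots ,f_N$ in ranges of spectral projections $E_{|T|}$. They should satisfy:
\begin{enumerate}
\item[(a)] the vectors $|T|f_j$ are mutually orthogonal;
\item[(b)] $\nm {|T|f_j}{}\ge \sigma _j-\ep$.
\end{enumerate}
To obtain (a), take each $f_j$ either as an eigenvector for an isolated eigenvalue above the essential spectrum, or from disjoint thin spectral intervals near $\max \sigma _{\mathrm{ess}}(|T|)$, so that the spectral measures of the $f_j$ are disjointly supported.

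With these $f_j$ I set $g_j=U|T|f_j/\nm {|T|f_j}{}$. These $g_j$ are orthonormal because $U$ is isometric on $\overline{\mathrm{Ran}\,|T|}$, and $(Tf_j,g_j)=\nm {|T|f_j}{}$. Extend both families arbitrarily to infinite orthonormal sequences; by monotonicity of the $\ell ^\Phi$ norm the extra terms cannot decrease it. This gives
$$
N_\Phi (T)\ge \nm {\{ (\sigma _j-\ep )_+\} _{j\le N}}{\ell ^\Phi}.
$$
Letting $\ep \to 0$ (by continuity of $\Phi$ where it is finite, or by a direct check of the Luxemburg infimum) and then $N\to \infty$ with the Fatou property proves \eqref{Eq:AltOrlSchattNorm}.

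Completeness is then routine. By \eqref{Eq:AltOrlSchattNorm}, $\nm \cdo {\mascI _\Phi}$ is a supremum of seminorms $T\mapsto \nm {\{ (Tf_j,g_j)\}}{\ell ^\Phi}$, so it satisfies the triangle inequality. Moreover $\nm T{\maclH _1\to \maclH _2}=\sigma _1(T)\le C\nm T{\mascI _\Phi}$, since $\Phi (\sigma _1/\lambda )\le 1$ forces $\sigma _1\lesssim \lambda$. So a Cauchy sequence $\{T_n\}$ in $\mascI _\Phi$ converges in operator norm to some $T$. For fixed orthonormal sequences and fixed $N$, the finite sequences $\{ ((T_n-T)f_j,g_j)\} _{j\le N}$ are limits of $\{ ((T_n-T_m)f_j,g_j)\} _{j\le N}$ as $m\to \infty$. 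Taking first $m\to \infty$, then $N\to \infty$ (Fatou), then the supremum over orthonormal sequences, \eqref{Eq:AltOrlSchattNorm} gives $T\in \mascI _\Phi$ and $\nm {T_n-T}{\mascI _\Phi}\to 0$.
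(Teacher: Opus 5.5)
Your proof is correct. It is also considerably more complete than what the paper offers: the paper gives no argument beyond the remark that the proposition follows by a straightforward application of the spectral theorem.

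That tool is exactly what drives your lower bound: the Schmidt decomposition for compact $T$, and the polar decomposition together with spectral projections of $|T|$ in general. The upper bound over arbitrary orthonormal systems, however, needs an ingredient the paper does not mention. You supply it through Ky Fan's inequality for approximation numbers, combined with the Tomi\'c--Weyl inequality for the convex non-decreasing functions $t\mapsto \Phi (t/\lambda )$. This works even when $\Phi$ takes the value $\infty$: if $\sum _j\Phi (\sigma _j(T)/\lambda )<\infty$, then $|(Tf_j,g_j)|\le \sigma _1(T)$ keeps all arguments in the interval where $\Phi (\cdo /\lambda )$ is finite.

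For compact $T$ there is a route closer in spirit to the paper's remark. Insert $T=\sum _k\sigma _k(\cdo ,e_k)h_k$. Then $M_{jk}=|(f_j,e_k)(h_k,g_j)|$ is doubly substochastic by Cauchy--Schwarz and Bessel, and Jensen's inequality with $\Phi (0)=0$ gives the upper bound. Your majorization argument has the advantage of covering non-compact $T$ too. That is genuinely needed when $\Phi$ vanishes near the origin, since $\mascI _\Phi$ then contains all bounded operators.

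Deriving completeness from the variational formula, via the Fatou property and $\sigma _1(T)\lesssim \Vert T\Vert _{\mascI _\Phi}$, is clean and correct.

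Two details should be tidied.
\begin{enumerate}
\item In the non-compact case, $\mu =\max \sigma _{\mathrm{ess}}(|T|)$ may be an isolated eigenvalue of infinite multiplicity (e.g. when $T$ is an isometry). Then there are no disjoint thin spectral intervals carrying nonzero vectors. It is simpler to handle the indices with $\sigma _j(T)=\mu$ as follows. Pick those $f_j$ in the range of $E_{|T|}([\mu -\ep ,\mu ])$, which is infinite-dimensional in that situation. Diagonalize the compression of $|T|^2$ to a finite-dimensional subspace of that range. This gives orthonormal $f_j$ with the $|T|f_j$ mutually orthogonal, orthogonal to the eigenvectors already chosen, and satisfying $\Vert \, |T|f_j\Vert \ge \mu -\ep$.
\item Since $\{ (Tf_j,g_j)\} _j$ need not tend to zero, it is cleaner to apply Ky Fan's inequality directly to arbitrary index sets of size $N$ than to rely on rearrangement invariance.
\end{enumerate}
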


\par

Since our investigations especially concerns Schatten-von Neumann properties
for operators acting on Hilbert modulation spaces, it is convenient for us
to set
$$
\mascI _\Phi (\omega _1,\omega _2)
=
\mascI _\Phi (M^2_{(\omega _1)}(\rr {d_1}),M^2_{(\omega _2)}(\rr {d_2}))
$$
and
$$
\mascI _p (\omega _1,\omega _2)
=
\mascI _p (M^2_{(\omega _1)}(\rr {d_1}),M^2_{(\omega _2)}(\rr {d_2})).
$$

\section{Continuity for Fourier integral operators}\label{sec2} 

\par

In this section we obtain continuity for Fourier integral
operators when acting on Orlicz modulation spaces.
In the first part (Subsection \ref{subsec2.1})
we deduce continuity properties for operators with
kernels belonging to Orlicz modulation spaces.
Thereafter we consider Fourier integral operators
with amplitudes belonging to $M^{\infty ,1}_{(\omega )}$
and the phase function $\fy$
should satisfy \eqref{Eq:DetphiCond} and
$\fy ''\in M^{\infty ,1}_{(v)}$. Here $\omega$ is
$v$-moderate. We show that such Fourier integral
operators are continuous from $M^\Phi _{(\omega _1)}$
to $M^\Phi _{(\omega _2)}$, provided the weight functions
$\omega$, $\omega _1$ and $\omega _2$
obey suitable estimate conditions.

\par

Thereafter we consider Fourier integral
operators with amplitudes belonging to
the Orlicz modulation space $M^\Phi _{(\omega )}$, and
where the condition \eqref{Eq:DetphiCond} is replaced
by other ones. We show that such operators are continuous
from $M^{\Phi ^*} _{(\omega _1)}$ to
$M^\Phi _{(\omega _2)}$.

\par

\subsection{Mapping properties
for certain kernel operators
on Orlicz modulation spaces}
\label{subsec2.1}

\par

The next result concerns mapping properties for operators with
kernels in Orlicz modulation spaces.

\par

\begin{prop}
\label{Prop:KernelOrlModCont}
Let $\Phi$ be a Young function such that at least one of $\Phi$
and $\Phi ^*$ satisfies the
$\Delta _2$-condition, let 
$\omega \in \mascP _E(\rr {2d})$
and let $\omega _j\in \mascP _E(\rr {d_j})$,
$j=1,2$, be such that
$$
\frac {\omega _2(x_2,\xi _2)}{\omega _1(x_1,-\xi _1)}\lesssim \omega (x_1,x_2,\xi _1,\xi _2),
\qquad
x_j,\xi _j\in \rr {d_j}.
$$
Also let $K\in M^\Phi _{(\omega )}(\rr {d_2+d_1})$. 
Then $T_K$ from $\Sigma _1 (\rr {d_1})$
to $\Sigma _1 '(\rr {d_2})$ extends uniquely to a 
continuous operator from
$M^{\Phi ^*}_{(\omega _1)}(\rr {d_1})$ to 
$M^{\Phi}_{(\omega _2)}(\rr {d_2})$,
\begin{equation}
\label{Eq:KernelOrlModCont}
\begin{aligned}
\nm {T_Kf}{M^\Phi _{(\omega _2)}}
&\le
C\nm K{M^\Phi _{(\omega )}}\nm f{M^{\Phi ^*}_{(\omega _1)}},
\\[1ex]
K&\in M^\Phi _{(\omega )}(\rr {d_2+d_1}),\quad
f\in M^{\Phi ^*}_{(\omega _1)}(\rr {d_1}).
\end{aligned}
\end{equation}
\end{prop}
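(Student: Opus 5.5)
The plan is to argue by duality, writing the matrix element $(T_Kf,g)_{L^2}$ as a pairing of $K$ with a tensor product. For $f\in \Sigma _1(\rr {d_1})$ and $g\in \Sigma _1(\rr {d_2})$ we have
$$
(T_Kf,g)_{L^2} = (K,g\otimes \overline f)_{L^2},
$$
and we first estimate the right-hand side by $\nm K{M^\Phi _{(\omega )}}\nm {g\otimes \overline f}{M^{\Phi ^*}_{(1/\omega )}}$, using Proposition \ref{Prop:BasicPropOrlModSp2}(1) on $\rr {d_2+d_1}$. With Gaussian windows $\phi =\phi _2\otimes \phi _1$, the short-time Fourier transform factorizes:
$$
V_\phi (g\otimes \overline f)(x_2,x_1,\xi _2,\xi _1)=V_{\phi _2}g(x_2,\xi _2)\,\overline{V_{\phi _1}f(x_1,-\xi _1)} .
$$
The weight condition gives $\omega ^{-1}\lesssim \omega _1(x_1,-\xi _1)/\omega _2(x_2,\xi _2)$. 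Hence, up to the reflection $\xi _1\mapsto -\xi _1$, which preserves Orlicz norms, we get the pointwise bound
$$
|V_\phi (g\otimes \overline f)|/\omega \lesssim F_2\otimes F_1,
\qquad
F_2=|V_{\phi _2}g|/\omega _2,\quad F_1=|V_{\phi _1}f|\,\omega _1 .
$$

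The key step, and the main obstacle, is a tensor estimate in mixed Orlicz norms of the form
$$
\nm {F_2\otimes F_1}{L^{\Phi ^*}}\lesssim \nm {F_2}{L^{\Phi ^*}}\nm {F_1}{L^{\Phi ^*}} .
$$
This is false for general Young functions, since for non-power $\Phi$ Orlicz norms do not tensorize. So I will instead use a more robust form of duality. Rather than pairing with $g\otimes \overline f$ in a single Orlicz space, I estimate
$$
|(T_Kf,g)|\le \iint |V_\phi K|\,|V_{\phi _2}g|\,|V_{\phi _1}f|\,dX
$$
directly. This uses Moyal's identity for $V_\phi$ (up to the constant $\nm \phi{L^2}^{-2}$, handled via the reproducing formula). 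Then I integrate first in $(x_1,\xi _1)$. The inner function is
$$
G(x_2,\xi _2)=\int |V_\phi K(x_2,x_1,\xi _2,\xi _1)|\,\omega \,|V_{\phi _1}f(x_1,-\xi _1)|\,\omega _1(x_1,-\xi _1)\,dx_1d\xi _1 .
$$
By Hölder's inequality in Orlicz spaces, $G(x_2,\xi _2)\le 2\nm {|V_\phi K|\omega (x_2,\cdo ,\xi _2,\cdo )}{L^\Phi}\nm f{M^{\Phi ^*}_{(\omega _1)}}$. It then remains to show that
$$
H(x_2,\xi _2)=\nm {|V_\phi K|\omega (x_2,\cdo ,\xi _2,\cdo )}{L^\Phi}
$$
satisfies $\nm H{L^\Phi}\lesssim \nm K{M^\Phi _{(\omega )}}$. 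I would prove this by a Minkowski/Jensen-type argument, using the convexity of $\Phi$ together with the normalization $\iint \Phi (|V_\phi K|\omega /\lambda )\le 1$. If this mixed-norm inequality fails in general, I would fall back on realizing $T_Kf$ through $V_{\phi _2}(T_Kf)$. Specifically, $V_{\phi _2}(T_Kf)(x_2,\xi _2)$ equals an integral of $V_\phi K$ against $V_{\phi _1}f$ over $(x_1,\xi _1)$, by the reproducing formula. The bound then reduces to estimating the $L^\Phi _{(\omega _2)}$ norm of $x_2,\xi _2\mapsto G(x_2,\xi _2)/\omega _2$. This is the obstacle-laden step, and I would try to exploit that the $\Delta _2$ hypothesis on $\Phi$ or $\Phi ^*$ makes the Orlicz modular comparable with the norm.

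Having obtained $|(T_Kf,g)|\lesssim \nm K{M^\Phi _{(\omega )}}\nm f{M^{\Phi ^*}_{(\omega _1)}}\nm g{M^{\Phi ^*}_{(1/\omega _2)}}$ for $f,g\in \Sigma _1$, I apply Proposition \ref{Prop:BasicPropOrlModSp2}(1), the norm characterization by duality. This yields \eqref{Eq:KernelOrlModCont} for $f\in \Sigma _1$.

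For the extension and uniqueness, I use that one of $\Phi ,\Phi ^*$ satisfies $\Delta _2$. If $\Phi ^*$ does, then $\Sigma _1$ is dense in $M^{\Phi ^*}_{(\omega _1)}$ by Proposition \ref{Prop:BasicPropOrlModSp2}(2), and the extension follows by continuity. If instead $\Phi$ does, I define $T_Kf$ for $f\in M^{\Phi ^*}_{(\omega _1)}$ as an element of the dual of $M^{\Phi ^*}_{(1/\omega _2)}$ via $g\mapsto (K,g\otimes \overline f)$. I then identify this dual with $M^\Phi _{(\omega _2)}$, and obtain uniqueness from the weak density of $\Sigma _1$ in $M^{\Phi ^*}_{(\omega _1)}$.
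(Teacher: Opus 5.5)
\textbf{There is a genuine gap, and it is the step you leave open:} $\nm H{L^\Phi}\lesssim \nm K{M^\Phi _{(\omega )}}$. This is the embedding of $L^\Phi$ on the product phase space into the iterated space ($L^\Phi$ in $(x_2,\xi _2)$ of $L^\Phi$ in $(x_1,\xi _1)$). No Minkowski/Jensen argument can give it, because it is false for general Young functions, with or without the $\Delta _2$-condition. It is the same phenomenon you correctly spotted for $\nm {F_2\otimes F_1}{L^{\Phi ^*}}$.

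To see this, use $\nm {\chi _E}{L^\Phi}=1/\Phi ^{-1}(1/|E|)$. The function $F=\chi _A\otimes \chi _B$ with $|A|=|B|=N$ gives
\[
\frac{\nm H{L^\Phi}}{\nm F{L^\Phi}}
=
\frac{\nm {\chi _A}{L^\Phi}\nm {\chi _B}{L^\Phi}}{\nm {\chi _{A\times B}}{L^\Phi}}
=
\frac{\Phi ^{-1}(N^{-2})}{\bigl (\Phi ^{-1}(N^{-1})\bigr )^2}.
\]
Take a Young function with $\Phi (t)=t^2\ln (1/t)$ near the origin, as in Example \ref{Ex:OrlModSpace1}; it can be chosen to satisfy $\Delta _2$. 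Then $\Phi ^{-1}(u)\sim (2u/\ln (1/u))^{1/2}$ as $u\to 0$, so the ratio behaves like $\frac 12\sqrt{\ln N}\to \infty$. Your fallback through $V_{\phi _2}(T_Kf)$ runs into the same inequality. After the pointwise Orlicz--H\"older bound in $(x_1,\xi _1)$, you still have to take the $L^\Phi$ norm of $H$ in $(x_2,\xi _2)$, and $\Delta _2$ does not help.

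\textbf{The paper's proof has the same gap, and the statement itself fails in this generality.} The paper bounds $|V_{\phi _2}(T_Kf)\,\omega _2|(x,\xi )$ by $\nm {\maclK (x,\xi ,\cdo )}{L^\Phi}\nm F{L^{\Phi ^*}}$. It then writes ``this gives'' $\nm {T_Kf}{M^\Phi _{(\omega _2)}}\lesssim \nm {\maclK}{L^\Phi}\nm F{L^{\Phi ^*}}$, which silently uses the same false inequality.

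The gap cannot be closed. Take $\omega =\omega _j=1$ and $K=g\otimes \overline h$ with $g,h\in \Sigma _1(\rr d)$. Then $T_Kf=(f,h)_{L^2}\,g$. By Proposition \ref{Prop:BasicPropOrlModSp2}(1), the asserted estimate therefore forces $\nm g{M^\Phi}\nm h{M^\Phi}\lesssim \nm {g\otimes \overline h}{M^\Phi}$. With a Gaussian window, the short-time Fourier transform of $g\otimes \overline h$ is a tensor product. Choose $g=h$ to be a sum of $N$ widely separated time-frequency shifts of the Gaussian; this reproduces the ratio $\asymp \sqrt{\ln N}$ above.

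What your argument (and the paper's) actually proves is the estimate with $\nm K{M^\Phi _{(\omega )}}$ replaced by the mixed norm $\nm H{L^\Phi}$. For $\Phi (t)=t^p$, Fubini makes the two norms equal, and your proof is then the standard one for $M^p$.
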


\par

The result follows by standard arguments in operator
theory.
In order to assist the reader we here present the arguments.

\par

\begin{proof}
First suppose $\Phi$ satisfies the $\Delta _2$-
condition. 
Let $\phi _j\in \Sigma _1 (\rr {d_j})\setminus 0$ 
and let
$\phi (x,y) =\phi _2(x)\overline {\phi _1(y)}$. 
Since $\mascS (\rr {d_2+d_1})$
is dense in $M^\Phi _{(\omega )}(\rr {d_2+d_1})$,
it suffices to prove
\eqref{Eq:KernelOrlModCont} when $K\in
\Sigma _1 (\rr {d_2+d_1})$.
Let
$$
\maclK (x,\xi ,y,\eta ) = V_\phi K(x,y,\xi ,-\eta )\omega (x,y,\xi ,-\eta )
\quad \text{and}\quad F =V_{\phi _1}f \omega _1.
$$
Then
$$
|V_{\phi _2}(T_Kf)(x,\xi )\omega _2(x,\xi )|
\lesssim
|(\maclK (x,\xi ,\cdo ),F))_{L^2(\rr {2d_2})}|
\lesssim
\nm {\maclK (x,\xi ,\cdo )}{L^\Phi}\nm F{L^{\Phi ^*}}.
$$
This gives
$$
\nm {T_Kf}{M^\Phi _{(\omega _2)}}
\lesssim
\nm {\maclK}{L^\Phi}\nm F{L^{\Phi ^*}}
\asymp
\nm K{M^\Phi _{(\omega )}}\nm f{M^{\Phi ^*}_{(\omega _1)}}.
$$

\par

The
case when $\Phi ^*$ satisfies the $\Delta _2$-condition follows by
straight-forward modification of the arguments, giving the estimate
$$
|(T_Kf,g)_{L^2(\rr {d_2})}|\lesssim
\nm K{M^\Phi _{(\omega )}}\nm f{M^{\Phi ^*}_{(\omega _1)}}
\nm g{M^{\Phi ^*}_{(1/\omega _2)}},
$$
when $f\in \Sigma _1 (\rr {d_1})$
and $g\in \Sigma _1(\rr {d_2})$,
and then using the fact that $\mascS$ is dense in $M^{\Phi ^*}_{(\omega _1)}$
and in $M^{\Phi ^*}_{(1/\omega _2)}$.
The details are left for the reader.
\end{proof}

\par

\subsection{Continuity for Fourier
integral operators, when acting
on modulation spaces}

\par

In order to explain our results on
Fourier integral operators, we first
give the conditions on involved weight
functions.
We usually assume that they satisfy
\begin{equation}
\label{Eq:WeightsIneq}
\begin{aligned}
\frac {\omega _2(x,\xi )}{\omega _1(y,-\eta )}
&\le
C_1\omega _0(x,y,\xi ,\eta )
\\
&\le C_2\omega (X,\xi
-\fy '_x(X) , \eta -\fy '_y(X),-\fy '_\zeta (X)),
\\[1ex]
\omega (X,\xi _1+&\xi _2,\eta _1+\eta _2,z_1+z_2) \le C\omega (X,\xi
_1,\eta _1,z_1)v_0(\xi _2,\eta _2,z_2),
\\[1ex]
\omega &\in \mascP _s(\rr {2(2N+m)}),
\quad \omega _0 \in \mascP _s(\rr {2(2N)}),
\\[1ex]
\omega _j&\in \mascP _s(\rr {2d_j}),\quad 
N=d_1+d_2,\ j=1,2,
\\[1ex]
v_0&\in \mascP _s(\rr {2N+m}),\quad
\sup _{t\in [0,1]}
\nm {v_0(t\cdo )/v_0}{L^\infty}<\infty ,
\\[1ex]
v(X,\xi ,\eta ,z) &=v_0(\xi ,\eta ,z),
\ X=(x,y,\zeta )\in \rr {2N+m},
\end{aligned}
\end{equation}
and that the phase function should satisfy
\begin{equation}\label{Eq:PhaseFuncCond}
\fy \in C(\rr {2d+m})
\quad \text{and}\quad
\fy ^{(\alpha )}\in M^{\infty ,1}_{(v)}
(\rr {2d+m}),\ |\alpha |=2.
\end{equation}

\par

The first result shows that
Fourier integral operators with
amplitudes in $M^{\infty ,1}_{(\omega)}
(\rr {2d+N})$ are well-defined as
continuous mappings from
$\Sigma _1(\rr d)$ to $\Sigma _1'(\rr d)$.
For polynomial weights, the result is 
essentially the same as
\cite[Theorem 2.1]{ToCoGa}.
For general weights, the result
follows by similar arguments as for
\cite[Theorem 2.1]{ToCoGa}.
The proof is therefore omitted.

\par




\begin{thm}\label{Thm:Cont1Prel}
Let $s>1$, $d_1=d_2=d$, 
$\fy \in C(\rr {2d+m})$, and
$\omega ,v\in \mascP _s(\rr {2(2d+m)})$ 
satisfy
\eqref{Eq:DetphiCond},
\eqref{Eq:WeightsIneq} and
\eqref{Eq:PhaseFuncCond}. Then the 
map $a\mapsto \op _\fy (a)$
from $\Sigma _1(\rr {2d+m})$ to
$\maclL (\Sigma _1(\rr d),\Sigma _1'(\rr d))$ extends uniquely to
a continuous map from $M^{\infty ,1}_{(\omega )}(\rr {2d+m})$ to
$\maclL (\Sigma _1(\rr d),\Sigma _1'(\rr d))$.
\end{thm}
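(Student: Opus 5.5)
The plan is to reduce continuity of $a\mapsto \op _\fy (a)$ to a uniform bound on the sesquilinear form
$$
(\op _\fy (a)f,g)_{L^2}
=
(2\pi )^{-\frac 12(d+m)}\iiint a(x,y,\zeta )\,e^{i\fy (x,y,\zeta )}f(y)\overline{g(x)}\,dxdyd\zeta ,
$$
valid for $a,f,g\in \Sigma _1$. Concretely, for fixed $f,g\in \Sigma _1(\rr d)$ we aim at
$$
|(\op _\fy (a)f,g)_{L^2}|\le C_{f,g}\nm a{M^{\infty ,1}_{(\omega )}},
$$
where $C_{f,g}$ is bounded by a continuous seminorm of $f$ and of $g$ in $\Sigma _1$. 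Since $\Sigma _1$ is dense in $M^{\infty ,1}_{(\omega )}$ only in the narrow (weak$^*$) sense, uniqueness of the extension will be obtained from this estimate together with narrow convergence, as in \cite{ToCoGa}.

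\textbf{Step 1 (duality).} Rewrite the form as a pairing $\scal a{\overline{e^{i\fy}}\,(\overline g\otimes f\otimes 1)}$. Since the dual of $M^{\infty ,1}_{(\omega )}$ contains $M^{1,\infty}_{(1/\omega )}$, it suffices to show
$$
e^{i\fy}\,\bigl(\overline g\otimes f\otimes 1\bigr)\in M^{1,\infty}_{(1/\omega )}(\rr {2d+m})
$$
with norm controlled by seminorms of $f$ and $g$. The factor $1$ in the $\zeta$-variable is a problem: it is not integrable in $\zeta$. We therefore follow \cite{ToCoGa} and estimate instead the STFT of $a$ against a localized window.

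\textbf{Step 2 (localization).} Insert a partition of unity $\sum _k \chi (\cdo -k)=1$ with $\chi \in \Sigma _1$, indexed over a lattice in $X=(x,y,\zeta )$. Near each lattice point $X_k$, Taylor expand the phase:
$$
\fy (X)=\fy (X_k)+\scal{\fy '(X_k)}{X-X_k}+\psi _k(X).
$$
The remainder $\psi _k$ is controlled by $\fy ''$. Since $\fy ''\in M^{\infty ,1}_{(v)}$, the functions $e^{i\psi _k}\chi (\cdo -X_k)$ lie in $M^{1}_{(v)}$, uniformly in $k$. This uses the algebra property of $M^{\infty ,1}_{(v)}$ and exponentiation. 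The condition $\sup _t\nm {v_0(t\cdo )/v_0}{L^\infty}<\infty$ and $s>1$ (subexponential weights) are what permit the series for $e^{i\psi _k}$ to converge in the weighted Sjöstrand algebra.

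The linear part $\scal{\fy '(X_k)}{X}$ is a modulation. Pairing $a$ against it produces $V a$ evaluated at frequencies shifted by $\fy '(X_k)$. This is exactly where the weight condition \eqref{Eq:WeightsIneq} with $\omega (X,\xi -\fy '_x,\eta -\fy '_y,-\fy '_\zeta )$ enters, transferring $\omega$ onto $\omega _0$ and hence onto the weights of $f$ and $g$.

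\textbf{Step 3 (summation).} Summing over $k$, the $x,y$ directions are summable thanks to the decay of $f$ and $g$ in $\Sigma _1$ (Proposition \ref{stftGelfand2}, with exponential decay dominating $\mascP _s$ weights). The $\zeta$ direction is summable because of the non-degeneracy \eqref{Eq:DetphiCond}. The map $(y,\zeta )\mapsto (\fy '_x,\fy '_\zeta )$ has Jacobian bounded from below, and $\fy ''$ is bounded, so it is globally bi-Lipschitz (Hadamard-type argument). Hence, as $\zeta _k$ runs over the lattice, the frequencies $\fy '_x(X_k)$ are well-separated. The decay of $\widehat g$ then sums over $k$, while the remaining frequency is integrated against the $L^1$-in-frequency part of $\nm a{M^{\infty ,1}_{(\omega )}}$.

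We expect this last step, turning \eqref{Eq:DetphiCond} into a uniform counting bound in $\zeta$ with subexponential weights in place of polynomial ones, to be the main obstacle. We would first try to mirror the change-of-variables argument of \cite[Theorem 2.1]{ToCoGa}, using $v_0$-moderateness to absorb the weight factors.
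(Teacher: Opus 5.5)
Your overall architecture matches the paper's. The paper omits the proof and defers to \cite[Theorem 2.1]{ToCoGa}: the representation in Lemma \ref{Lem:LemmaThmCont2} (local Taylor expansion, remainder in the weighted Sj\"ostrand algebra, frequency shift by $\fy'$, weight transfer via \eqref{Eq:WeightsIneq}), followed by a change of variables driven by \eqref{Eq:DetphiCond}, as in the proof of Theorem \ref{Thm:Cont1}.

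\textbf{The gap is in Step 3.} Global bi-Lipschitzness of $(y,\zeta)\mapsto(\fy'_x,\fy'_\zeta)$ is correct (your Hadamard remark is right). It does \emph{not} imply that the points $\fy'_x(X_k)$ are separated as $\zeta_k$ runs through the lattice with $(x_k,y_k)$ fixed.
\begin{itemize}
\item Take $d=m=1$ and $\fy(x,y,\zeta)=xy+\zeta^2/2$. The determinant in \eqref{Eq:DetphiCond} is $1$ and $\fy''$ is constant, yet $\fy'_x=y$ does not depend on $\zeta$ at all.
\item For $m>d$ such separation is impossible by counting: a Lipschitz map sends the $\asymp R^m$ lattice points of a ball of radius $R$ into a ball of radius $\lesssim R$ in $\rr d$.
\end{itemize}
So the decay of $\widehat g$ cannot absorb the $\zeta_k$-sum. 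Nor can you ``integrate the remaining frequency'' in a discrete sum unless the sample points are separated in that direction too.

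What non-degeneracy does give, for fixed $(x_k,y_k)$, is that the \emph{pairs} $(\fy'_x(X_k),\fy'_\zeta(X_k))$ are uniformly separated in $\rr{d+m}$. The summation should therefore run as follows. Bound the $f$-factor by its supremum in $\eta$ and integrate $\eta$ out. Then absorb the $\zeta_k$-sum into the $L^1$ norm of $\sup_X|V_\phi a(X,\cdo)\omega(X,\cdo)|$ taken jointly in the frequency directions $(\xi,z)$, uniformly in the shift by $\xi$; since you sample at points, you need its Wiener amalgam form. Only then integrate $\xi$ against $V g(x_k,\cdo)$.

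The paper's continuous version avoids sampling altogether. Bound the $g$-factor $F_2$ by its supremum and integrate out $\xi$. For fixed $y$, substitute $(x,\zeta)\mapsto(\fy'_y(X),\fy'_\zeta(X))$, whose Jacobian determinant is exactly the one in \eqref{Eq:DetphiCond}. This yields $|(\op_\fy(a)f_1,f_2)|\lesssim\dbar^{\,-1}\nm G{L^1}\nm a{M^{\infty,1}_{(\omega)}}\nm{f_1}{M^1_{(\omega_1)}}\nm{f_2}{M^\infty_{(1/\omega_2)}}$, and needs no decay of $f_2$ at all.

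\textbf{A smaller problem is in Step 2.} With $\chi\in\Sigma_1$, the remainder $\psi_k$ grows quadratically on the non-compact support of $\chi(\cdo-X_k)$. Then $\psi_k\notin M^{\infty,1}_{(v)}$, and the algebra/exponentiation argument does not apply. As in \cite[Lemma 2.3]{ToCoGa} (see the remark after Lemma \ref{Lem:LemmaThmCont2}), the remainder must be multiplied by a compactly supported cut-off equal to $1$ on the support of the localizing function. That forces $\chi$ to be compactly supported; $\chi\in\maclD_{0,s}$ works.

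This is where $s>1$ genuinely enters: $\maclD_{0,s}$ is nontrivial only for $s>1$, and its elements lie in $M^1_{(v)}$ because $v$ is subexponential. By contrast, the exponential series converges in $M^{\infty,1}_{(v)}$ for any submultiplicative $v$. The condition $\sup_{t\in[0,1]}\nm{v_0(t\cdo)/v_0}{L^\infty}<\infty$ is what controls the dilates $\fy''(X_k+t\,\cdo)$ in the integral form of the Taylor remainder, uniformly in $t$.
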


\par

The next two theorems assert continuity
and compactness for Fourier integral
operators in Theorem \ref{Thm:Cont1Prel},
when they are acting on Orlicz modulation
spaces.

\par

\begin{thm}\label{Thm:Cont1}
Let $s>1$, $d_1=d_2=d$, $\Phi$
be a Young function such that
$1<q_\Phi \le p_\Phi <\infty$,
$\omega _1,\omega _2\in
\mascP _s(\rr {2d})$, and let
$\fy$, $\omega$, $v$ be the same
as in Theorem \ref{Thm:Cont1Prel}.
Also let $a\in M^{\infty ,1}_{(\omega )}
(\rr {2d+m})$.
Then $\op _{\fy }(a)$ from
$\Sigma _1(\rr d)$
to $\Sigma _1'(\rr d)$ extends
uniquely to a continuous operator from
$M^\Phi _{(\omega _1)}(\rr d)$ to
$M^\Phi _{(\omega _2)}(\rr d)$.
Moreover, for some constant $C$ it holds
\begin{equation}\label{normuppsk2}
\nm {\op _{\fy }(a)}
{M^\Phi _{(\omega _1)}\to M^\Phi _{(\omega _2)}}
\le
C\dbar ^{\, -1}\nm a{M^{\infty ,1}_{(\omega )}}
\exp (C\nm {\fy ''}{M^{\infty,1}_{(v)}}).
\end{equation}
\end{thm}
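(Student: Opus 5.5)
We follow the strategy behind \cite[Theorem 2.2]{ToCoGa}: first reduce the problem to a weighted estimate on the phase space, and then replace the Lebesgue-space step there by an Orlicz-space argument that relies on $1<q_\Phi\le p_\Phi<\infty$.

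\emph{Kernel of the phase-space operator.} Fix $\phi\in\Sigma_1(\rr d)\setminus 0$, say a Gaussian, and let $f\in\Sigma_1(\rr d)$, $g\in\Sigma_1(\rr d)$. We write
$$
(\op_\fy(a)f,g)_{L^2}=\iint \mathcal K(x,\xi ,y,\eta )\,V_\phi f(y,\eta)\overline{V_\phi g(x,\xi)}\,dyd\eta dxd\xi ,
$$
where $\mathcal K(x,\xi,y,\eta)=(\op_\fy(a)\phi_{y,\eta},\phi_{x,\xi})$ is the phase-space kernel of $\op_\fy(a)$ and $\phi_{y,\eta}$ denotes the time-frequency shift of $\phi$.

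\emph{Pointwise bound on $\mathcal K$.} Expanding $a$ through its short-time Fourier transform and Taylor expanding $\fy$ to second order around grid points, as in \cite{Bu1,ToCoGa}, we expect an estimate of the form
$$
|\mathcal K(x,\xi,y,\eta)|\,\frac{\omega_2(x,\xi)}{\omega_1(y,\eta)}\lesssim \int G(X,x,\xi,y,\eta)\,dX .
$$
Here $G$ is obtained from $\sup_X|V a(X,\cdot)\omega|$ composed with the canonical map $(x,\xi)\mapsto(y,\eta)$ generated by $\fy$. The remainder term from $\fy''\in M^{\infty,1}_{(v)}$ produces the factor $\exp(C\nm{\fy''}{M^{\infty,1}_{(v)}})$, through the algebra and multiplier properties of $M^{\infty,1}_{(v)}$. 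These properties still hold for $v\in\mascP_s$ with $s>1$, provided every window is chosen in $\Sigma_1$ and the condition $\sup_t\nm{v_0(t\cdot)/v_0}{L^\infty}<\infty$ is used. The weight conditions \eqref{Eq:WeightsIneq} move $\omega_2/\omega_1$ onto $\omega$.

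\emph{Schur-type bound.} The non-degeneracy \eqref{Eq:DetphiCond} should give that the majorant $H(x,\xi,y,\eta)$ of the weighted kernel satisfies
$$
\sup_{x,\xi}\int H\,dyd\eta\lesssim \dbar^{-1}\nm a{M^{\infty,1}_{(\omega)}}e^{C\nm{\fy''}{}},\qquad \sup_{y,\eta}\int H\,dxd\xi\lesssim \dbar^{-1}\nm a{M^{\infty,1}_{(\omega)}}e^{C\nm{\fy''}{}} ,
$$
because changing variables along the canonical relation costs the Jacobian factor $\dbar^{-1}$. In \cite{ToCoGa} this Schur bound gives $L^p$-boundedness for every $p$. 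Here we instead need the integral operator with kernel $H$ to be bounded on $L^\Phi(\rr{2d})$.

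\emph{Passing to Orlicz spaces.} This is the main obstacle. Our first approach is interpolation. The condition $1<q_\Phi\le p_\Phi<\infty$ places $L^\Phi$ between $L^{p}$ and $L^{q}$ for $q_\Phi>p>1$ and $q>p_\Phi$ within Boyd-type interpolation, so an operator bounded on $L^1$ and $L^\infty$ with norm $\le M$ is bounded on $L^\Phi$ with norm $\lesssim M$. If one wants to avoid interpolation theory, a direct route is available: apply Jensen's inequality to the probability measure $H(x,\xi,\cdot)\,dyd\eta/M$ to obtain
$$
\int\Phi\bigl(|\textstyle\int HF|/(M\lambda)\bigr)\,dxd\xi\le M^{-1}\iint H\,\Phi(|F|/\lambda)\le\int\Phi(|F|/\lambda),
$$
which yields $\nm{\int HF}{L^\Phi}\le M\nm F{L^\Phi}$ directly. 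This second argument is simpler and I would use it. The condition on $q_\Phi,p_\Phi$ then serves only to ensure the $\Delta_2$-condition and the duality in Proposition \ref{Prop:BasicPropOrlModSp2}.

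\emph{Conclusion.} With $F=V_\phi f\,\omega_1$, Proposition \ref{Prop:NormInvModSp} and the duality in Proposition \ref{Prop:BasicPropOrlModSp2} give
$$
\nm{\op_\fy(a)f}{M^\Phi_{(\omega_2)}}\lesssim \dbar^{-1}\nm a{M^{\infty,1}_{(\omega)}}e^{C\nm{\fy''}{}}\nm f{M^\Phi_{(\omega_1)}}
$$
for $f\in\Sigma_1(\rr d)$. Since $\Sigma_1$ is dense in $M^\Phi_{(\omega_1)}$ by the $\Delta_2$-condition, this yields the unique continuous extension and \eqref{normuppsk2}. Consistency with the operator defined in Theorem \ref{Thm:Cont1Prel} holds because both agree on $\Sigma_1(\rr d)$.
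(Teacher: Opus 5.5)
Your proposal follows the paper's proof. The phase-space kernel bound you anticipate is Lemma \ref{Lem:LemmaThmCont2}, obtained from \cite{ToCoGa}. Your row and column bounds come from the same changes of variables $(x,\zeta)\mapsto(\fy'_y,\fy'_\zeta)$ and $(y,\zeta)\mapsto(\fy'_x,\fy'_\zeta)$, both controlled by \eqref{Eq:DetphiCond}. The conclusion uses the same duality and density. The one variation is in the Orlicz Schur step: the paper inserts $t_1t_2\le\Phi(t_1)+\Phi^*(t_2)$ into the bilinear form with normalized $f_1,f_2$, while you apply Jensen to the positive integral operator. The two arguments are equivalent and equally general.

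One small correction: the cutoffs used for the second-order Taylor localization of $\fy$ must be compactly supported. The paper therefore takes them in $\maclD_{0,s}$, which is nontrivial precisely because $s>1$. They cannot be in $\Sigma_1$, which contains no nonzero compactly supported functions.
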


\par


\par

\begin{thm}\label{Thm:Cont1Comp}
Let $s>1$, $d_1=d_2=d$, $\Phi$
be a Young function such that
$1<q_\Phi \le p_\Phi <\infty$,
$\omega _1,\omega _2\in
\mascP _s(\rr {2d})$, and let
$\fy$, $\omega$, $v$ be the same
as in Theorem \ref{Thm:Cont1Prel}.
Also let $a\in M^{\sharp ,1}_{(\omega )}(\rr 
{2d+m})$. Then $\op _{\fy }(a)$ 
from
$M^\Phi _{(\omega _1)}(\rr d)$ to
$M^\Phi _{(\omega _2)}(\rr d)$
is compact.
\end{thm}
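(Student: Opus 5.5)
We prove compactness by approximation. The norm estimate \eqref{normuppsk2} of Theorem \ref{Thm:Cont1} is linear in $a$, so compact operators can be obtained as limits of compact ones.

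\textbf{Step 1: approximation of the amplitude.} By definition, $M^{\sharp ,1}_{(\omega )}(\rr {2d+m})$ is the completion of $M^{1,1}_{(\omega )}$ (equivalently, by Lemma \ref{Lemma:ComplModSpace}, of $\Sigma _1(\rr {2d+m})$) in the norm of $M^{\infty ,1}_{(\omega )}$. Hence we choose $a_k\in \Sigma _1(\rr {2d+m})$ with $\nm {a-a_k}{M^{\infty ,1}_{(\omega )}}\to 0$. With $\fy$ fixed, \eqref{normuppsk2} applied to $a-a_k$ gives
$$
\nm {\op _\fy (a)-\op _\fy (a_k)}{M^\Phi _{(\omega _1)}\to M^\Phi _{(\omega _2)}}
\le C\dbar ^{\, -1}e^{C\nm {\fy ''}{M^{\infty ,1}_{(v)}}}\nm {a-a_k}{M^{\infty ,1}_{(\omega )}}\to 0 .
$$
Compact operators between Banach spaces form a closed subspace in operator norm. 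It therefore suffices to show that $\op _\fy (a_k)$ is compact for each $k$.

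\textbf{Step 2: compactness for nice amplitudes.} For $b=a_k\in \Sigma _1$ we localize. Take $\chi _R\in \maclD _{0,s}(\rr {2d+m})$ with $\chi _R=1$ on the ball of radius $R$. Since $s>1$, this class is nontrivial, and $\chi _R$ can be taken as dilations of a fixed cutoff. Multiplication by such $\chi_R$ is uniformly bounded on $M^{\infty,1}_{(\omega)}$, and $b\chi _R\to b$ in $M^{\infty ,1}_{(\omega )}$, indeed in $M^{1,1}_{(\omega)}$. By Step 1 again, it is enough to treat amplitudes $b$ with compact support in $(x,y,\zeta )$.

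For such $b$, the kernel
$$
K(x,y)=(2\pi )^{-\frac 12(d+m)}\int b(x,y,\zeta )e^{i\fy (x,y,\zeta )}\, d\zeta
$$
is compactly supported. Since $\fy ''\in M^{\infty ,1}_{(v)}$, the functions $e^{i\fy}$ restricted to compact sets lie locally in $M^{\infty,1}$; this is the standard fact used in \cite{ToCoGa}, from which we also get local membership of $\fy'$. Hence $b\,e^{i\fy}$ lies in $M^{1,1}_{(v_1)}$ for every $v_1\in \mascP _s$, because compact support in $M^{\infty ,1}$ upgrades to $M^{1,1}$ with any weight moderate of subexponential type. Integrating out $\zeta$ then gives $K\in M^{1,1}_{(\vartheta )}(\rr {2d})$ for any weight $\vartheta \in \mascP _s(\rr {4d})$. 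Choose $\vartheta$ so that
$$
\frac{\omega _2(x,\xi )}{\omega _1(y,-\eta )}\cdot \vartheta _0
\lesssim \vartheta ,
$$
where $\vartheta _0$ is an extra weight growing to infinity in all variables (e.g. $e^{|\cdot |^{1/(2s)}}$).

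\textbf{Step 3: the kernel operator is compact.} With this kernel, $T_K$ maps $M^\Phi _{(\omega _1)}\subseteq M^\infty _{(\omega _1)}$ into $M^{1}_{(\omega _2\vartheta _0')}$ for some weight $\vartheta _0'\to\infty$. This follows from the Schur-type estimate in the proof of Proposition \ref{Prop:KernelOrlModCont}, with $L^\infty$ and $L^1$ in place of $\Phi ^*$, $\Phi$. The embedding $M^\Phi _{(\omega _1)}\hookrightarrow M^\infty _{(\omega _1)}$ holds because $q_\Phi>1$ and the Young function is dominated near $0$ appropriately (Proposition \ref{Prop:OrliczModInvariance}). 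Further, $M^1_{(\omega _2\vartheta _0')}\hookrightarrow M^\Phi _{(\omega _2)}$ is compact, since the weight ratio tends to infinity. To verify this, cut $V_\phi f$ into a finite-dimensional-like piece on a compact set plus a small tail, or use the compactness of weighted $\ell^1\to\ell^\Phi$ embeddings via Gabor frames. Composing the bounded map $T_K$ with this compact embedding yields compactness of $\op _\fy (b)$.

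\textbf{Main obstacle.} The delicate point is Step 2. We must justify that a compactly supported $M^{1,1}$ amplitude times $e^{i\fy}$, with $\fy$ only having $\fy ''\in M^{\infty,1}_{(v)}$, produces a kernel with enough weighted decay in both phase-space variables. The linear part of $\fy$ shifts frequencies. Over a compact set, however, $\fy '$ is bounded, so the shift is bounded; the moderateness in \eqref{Eq:WeightsIneq} should absorb it. If this step resists, the alternative is to use the Gabor-matrix representation from the proof of Theorem \ref{Thm:Cont1}. There, $\op _\fy (a)$ acts as a matrix operator dominated by a convolution-type kernel in $\ell ^1$ along the frequency shifts. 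The $\sharp$-condition says this kernel's coefficients tend to zero at spatial infinity, so it is the norm limit of finite-band truncations. Each truncation is compact, because its range is essentially supported on a finite set of lattice points.
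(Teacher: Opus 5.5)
Your Step 1 is exactly the paper's argument for Theorem \ref{Thm:Cont1Comp}. You approximate $a$ by $a_j\in \Sigma _1$ via Lemma \ref{Lemma:ComplModSpace}, use the linear bound \eqref{normuppsk2}, and use closedness of the compact operators.

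The gap is in Steps 2--3, which are your substitute for the paper's Proposition \ref{Prop:Cont1Comp}. You claim that $b\,e^{i\fy}\in M^{1,1}_{(v_1)}$ for \emph{every} $v_1\in \mascP _s$, and hence that $K\in M^{1,1}_{(\vartheta )}$ for every $\vartheta \in \mascP _s(\rr {4d})$. This is false in general. Compact support lets you replace $L^\infty$ by $L^1$ and insert arbitrary weights in the \emph{space} variables. Decay in the \emph{frequency} variables, however, is governed by the smoothness of $e^{i\fy}$, and only $\fy ''\in M^{\infty ,1}_{(v)}$ is assumed.

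Here is a counterexample. Take all weights equal to $1$, $m=d$, and $\fy (x,y,\zeta )=\scal {x-y}\zeta +h(x)$, where $h''\in M^{\infty ,1}$ but $h\notin C^\infty$. This $\fy$ satisfies \eqref{Eq:DetphiCond} with $\dbar =1$. Then $\op _\fy (b)=e^{ih}\op (b)$, and its kernel $e^{ih(x)}k_b(x,y)$ has in general only polynomial decay in $\xi$. So a factor such as $e^{|\xi |^{1/(2s)}}$ in $\vartheta _0$ is unattainable. Consequently the mapping into $M^1_{(\omega _2\vartheta _0')}$ with $\vartheta _0'\to \infty$ in the frequency directions is unproved. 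Whatever frequency gain exists must come from the structure of $\fy$, not from localization. Your ``main obstacle'' paragraph focuses on the linear part of $\fy$, which is harmless. The real limitation is the finite smoothness of the Taylor remainder $\fy (X+Y)-\fy (X)-\scal {\fy '(X)}Y$, which is what restricts $G$ in Lemma \ref{Lem:LemmaThmCont2}.

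The paper's Proposition \ref{Prop:Cont1Comp} uses only a polynomial gain. For $a\in \Sigma _1$ it applies Theorem \ref{Thm:Cont1} with $\omega$ enlarged by a polynomial factor and target weight $\vartheta _2=\omega _2(x,\xi )(1+|x|)(1+|\xi |)$, using $|\fy '_x|\lesssim 1+|x|+|y|+|\zeta |$. It then uses compactness of the embedding $M^\Phi _{(\vartheta _2)}\hookrightarrow M^\Phi _{(\omega _2)}$.

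Your own route can be repaired without any extra weight. By Lemma \ref{Lem:LemmaThmCont2}, \eqref{Eq:KernelRel} and the translation $(\xi ,\eta )\mapsto (\xi +\fy '_x,\eta +\fy '_y)$,
$$
\int _{\rr {4d}}|V_\psi K_{b,\fy}|\,\frac {\omega _2(x,\xi )}{\omega _1(y,-\eta )}\, dxdyd\xi d\eta \lesssim \nm G{L^1}\int _{\rr {4d+m}}\sup _{z\in \rr m}|V_\phi b\cdot \omega |(X,\xi ,\eta ,z)\, dXd\xi d\eta ,
$$
which is finite for $b\in \Sigma _1$. Thus $K_{b,\fy}$ lies in $M^1$ with exactly the matched weight. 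Truncated Gabor expansions of $K_{b,\fy}$ give finite-rank operators. By Proposition \ref{Prop:KernelOrlModCont} with $\Phi (t)=t$, these converge to $\op _\fy (b)$ in $\maclL (M^\infty _{(\omega _1)},M^1_{(\omega _2)})$, and hence in $\maclL (M^\Phi _{(\omega _1)},M^\Phi _{(\omega _2)})$.

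Your fallback Gabor-matrix sketch points in this direction. However, spatial truncation alone does not produce finite rank. What does is the $\ell ^1$-summability in frequency supplied by the bound above.
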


\par

Theorem \ref{Thm:Cont1Comp} will follow
by combining Theorem \ref{Thm:Cont1},
with certain density arguments and the 
following proposition.

\par

\begin{prop}\label{Prop:Cont1Comp}
Let $s>1$, $\Phi$,
$\fy$, $\omega$, $v$, $\omega
_1$, $\omega _2$ be the same
as in Theorem \ref{Thm:Cont1}, and
let $a\in \Sigma _1(\rr 
{2d+m})$. Then $\op _{\fy }(a)$ 
from
$M^\Phi _{(\omega _1)}(\rr d)$ to
$M^\Phi _{(\omega _2)}(\rr d)$
is compact.
\end{prop}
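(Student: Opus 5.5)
The plan is to obtain compactness from a \emph{gain of weight} and a compact embedding. Since $a\in \Sigma _1(\rr {2d+m})$, Proposition \ref{stftGelfand2} gives $|V_\Phi a(X,\Xi )|\lesssim e^{-r(|X|+|\Xi |)}$ for every $r>0$. Hence $a\in M^{\infty ,1}_{(\omega \vartheta _0)}(\rr {2d+m})$ for every weight $\vartheta _0$ of the form $\vartheta _0(X,\Xi )=e^{c(|X|+|\Xi |)^{1/s}}$, $c>0$, because $s>1$.

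\textbf{Step 1: boundedness into a heavier weight.} Let $\vartheta (x,\xi )=e^{\varepsilon (|x|+|\xi |)^{1/s}}$ with $\varepsilon >0$ small. I would show that $\op _\fy (a)$ is continuous from $M^\Phi _{(\omega _1)}$ to $M^\Phi _{(\omega _2\vartheta )}$ by applying Theorem \ref{Thm:Cont1} with $(\omega _2,\omega )$ replaced by $(\omega _2\vartheta ,\omega \vartheta _0)$. This requires checking \eqref{Eq:WeightsIneq} for the new weights, and the main point is the inequality
$$
\vartheta (x,\xi )\lesssim \vartheta _0(X,\xi -\fy '_x(X),\eta -\fy '_y(X),-\fy '_\zeta (X)).
$$
It follows from $\fy ''\in M^{\infty ,1}_{(v)}\subseteq L^\infty$, which gives $|\fy '(X)|\lesssim 1+|X|$. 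Therefore $|\xi |\le |\xi -\fy '_x(X)|+C(1+|X|)$, and the subadditivity of $t\mapsto t^{1/s}$ gives the inequality once $c$ is large compared with $\varepsilon$.

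\textbf{Main obstacle.} The moderateness condition in \eqref{Eq:WeightsIneq} for $\omega \vartheta _0$ with respect to the $\Xi$-variables forces $v_0$ to be replaced by $v_0e^{c|\cdo |^{1/s}}$. That enlarged weight would then be required in \eqref{Eq:PhaseFuncCond}, which we do not want to assume for $\fy ''$. I would handle this in one of two ways.
\begin{enumerate}
\item Let $\vartheta _0$ depend only on $X$, so that $\Xi$-moderation is unaffected, and absorb the growth in $\xi$ through $|\xi |\le |\xi -\fy '_x(X)|+C(1+|X|)$. Here the term $|\xi -\fy '_x|$ still has to be controlled, which I would do by the decay of $V_\Phi a$ in $\Xi$ directly inside the proof of Theorem \ref{Thm:Cont1}.
\item Alternatively, rerun the estimate behind \eqref{normuppsk2} keeping track of the extra decay in $\Xi$ of $V_\Phi a$. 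Since that decay is faster than any $e^{-c|\Xi |}$, it absorbs the factor $\vartheta$ without enlarging $v$.
\end{enumerate}
I expect this bookkeeping to be the hardest part.

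\textbf{Step 2: compact embedding.} Next I would prove that $M^\Phi _{(\omega _2\vartheta )}(\rr d)\hookrightarrow M^\Phi _{(\omega _2)}(\rr d)$ is compact. Fix $R>0$, let $\chi _R$ be the indicator of the ball of radius $R$ in $\rr {2d}$, and set $P_Rf=V_\phi ^*(\chi _RV_\phi f)$. For the remainder,
$$
\nm {(1-\chi _R)V_\phi f\,\omega _2}{L^\Phi}\le \sup _{|(x,\xi )|\ge R}\vartheta (x,\xi )^{-1}\,\nm f{M^\Phi _{(\omega _2\vartheta )}},
$$
which tends to zero uniformly on bounded sets as $R\to \infty$. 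By Proposition \ref{Prop:NormInvModSp}, together with the continuity of $V_\phi ^*$ on $L^\Phi _{(\omega _2)}$, this yields $\nm {f-P_Rf}{M^\Phi _{(\omega _2)}}\to 0$ uniformly on bounded subsets of $M^\Phi _{(\omega _2\vartheta )}$. Each $P_R$ is compact: its kernel $\iint _{|(x,\xi )|\le R}\phi _{x,\xi }\otimes \overline{\phi _{x,\xi }}\,dxd\xi$ is a kernel of the type in Proposition \ref{Prop:KernelOrlModCont} with arbitrary weights. Alternatively, $P_R$ is a norm limit of finite-rank operators obtained by discretizing the compact integration domain, using the continuity of $(x,\xi )\mapsto \phi _{x,\xi }$ in $M^{\Phi ^*}$ and $M^{\Phi}$. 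Here the assumption $1<q_\Phi \le p_\Phi <\infty$ ensures the $\Delta _2$-conditions needed for Proposition \ref{Prop:BasicPropOrlModSp2} and for the density of $\Sigma _1$. Being a norm limit of compact operators, the embedding is compact.

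Combining Steps 1 and 2, $\op _\fy (a)$ is the composition of a continuous map from $M^\Phi _{(\omega _1)}$ into $M^\Phi _{(\omega _2\vartheta )}$ with the compact embedding into $M^\Phi _{(\omega _2)}$, so it is compact.
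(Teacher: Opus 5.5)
\textbf{Gap in Step 1.} Your architecture is the paper's: apply Theorem \ref{Thm:Cont1} with enlarged weights to gain a weight on the target side, then compose with a compact embedding. The paper takes the polynomial gain $\vartheta _2=\omega _2(1+|x|)(1+|\xi |)$ and quotes \cite[Proposition 4.8]{ToPfTe} for compactness of $M^\Phi _{(\vartheta _2)}\hookrightarrow M^\Phi _{(\omega _2)}$. Your Step 2 (finite-rank approximation of $P_R$ plus the tail estimate) is an acceptable substitute for that citation.

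The gap is Step 1. With the subexponential gain $\vartheta =e^{\varepsilon (|x|+|\xi |)^{1/s}}$, the claimed continuity $M^\Phi _{(\omega _1)}\to M^\Phi _{(\omega _2\vartheta )}$ is false in general, so neither of your two remedies can succeed. Here is a counterexample:
\begin{enumerate}
\item Take $d=m=1$, $\Phi (t)=t^2$, all weights and $v_0$ equal to $1$, and $\fy (x,y,\zeta )=(x-y)\zeta +h(x)$ with $h''(x)=\max (0,1-|x|)$.
\item Then $\fy ''\in M^{\infty ,1}$, since the hat function is compactly supported with integrable Fourier transform, and the determinant in \eqref{Eq:DetphiCond} equals $1$.
\item For $a(x,y,\zeta )=e^{-x^2-y^2-\zeta ^2}\in \Sigma _1$ and $f(y)=e^{-y^2}$ one gets $\op _\fy (a)f=e^{ih}g$, with $g$ a positive Gaussian.
\item The third derivative of $e^{ih}g$ jumps at $x=0$, because $h'''$ does.
\item But $\vartheta \gtrsim (1+|\xi |)^N$ for every $N$, so $M^2_{(\vartheta )}\subseteq \bigcap _N H^N\subseteq C^\infty$ (Sobolev spaces $H^N$), and $e^{ih}g\notin M^2_{(\vartheta )}$.
\end{enumerate}

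\textbf{Why the decay of $V_\phi a$ cannot help.} By Lemma \ref{Lem:LemmaThmCont2}, the kernel is controlled by $G*|V_\phi a\cdot \omega |$ evaluated at $\Theta$. A convolution decays no faster than its slower factor. The factor $G=|\mascF (e^{i\fy _{2,X}}\phi )|$ decays only as far as the regularity of $\fy$ permits, however fast $V_\phi a$ decays. So the gain in $\xi$ must be paid for by the phase, and it can only be polynomial.

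\textbf{How to repair it.} With a polynomial gain one has
\[
1+|\xi |\le (1+|\xi -\fy '_x(X)|)(1+|\fy '_x(X)|)\lesssim (1+|\xi -\fy '_x(X)|)(1+|X|).
\]
Hence the enlarged amplitude weight is moderate only with respect to $v_0(1+|\cdo |)$, not $v_0$. This is exactly the moderateness issue you spotted; the paper passes over it silently. Here the issue is harmless, because one derivative of $e^{i\fy _{2,X}}\phi$ involves only $\fy '(X+\cdo )-\fy '(X)$, hence only $\fy ''$. This can be estimated as in \cite[Lemma 2.3]{ToCoGa} and gives $G\in L^1$ with weight $v_0(1+|\cdo |)$. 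Replace your $\vartheta$ by such a mild gain, e.g.\ $(1+|x|+|\xi |)^{\varepsilon}$ with $\varepsilon \le 1$ or the paper's $(1+|x|)(1+|\xi |)$. The compact-embedding step still works, since it only needs $\omega _2/\vartheta _2\to 0$ at infinity.
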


\par

Before proving
Theorems \ref{Thm:Cont1}, 
\ref{Thm:Cont1Comp}, and
Proposition \ref{Prop:Cont1Comp},
we present some further continuity 
properties for Fourier integral
operators.
The amplitudes should
fulfill norm estimates, with
norm $\nmm \cdo$ either defined by
\begin{equation}
\label{Eq:NormSymb1}
\begin{aligned}
\nmm a &= 
\NM{\int _{\rr m}H_{a,\omega }(\cdo ,\zeta)\, d\zeta}{L^\Phi},
\\[1ex]
H_{a,\omega} (X)
&=
\sup _{z \in \rr m}\left (
\nm {V_\phi a(X,\cdo ,z)\omega (X,\cdo ,z)}{L^\Phi}
\right ),
\quad
X=(x,y,\zeta),
\end{aligned}
\end{equation}
or
\begin{equation}
\label{Eq:NormSymb2}
\begin{aligned}
\nmm a &= 
\NM{\int _{\rr m}H_{a,\omega }(\cdo ,z)\, dz}{L^\Phi},
\\[1ex]
H_{a,\omega} (x,y,z)
&=
\sup _{\zeta \in \rr m}\left (
\nm {V_\phi a(X,\cdo ,z)\omega (X,\cdo ,z)}{L^\Phi}
\right ),
\quad
X=(x,y,\zeta),
\end{aligned}
\end{equation}
or by
\begin{equation}
\label{Eq:NormSymb3}
\begin{aligned}
\nmm a &= \int _{\rr m}
\nm{H_{a,\omega }(\cdo ,z)}{L^\Phi}
\, dz,
\\[1ex]
H_{a,\omega}
(x,y,\xi ,\eta ,z)
&=
\sup _{\zeta \in \rr m}\left (
|V_\phi a(X,\xi ,\eta ,z)\omega (X,\xi ,\eta ,z)|
\right ),
\quad
X=(x,y,\zeta).
\end{aligned}
\end{equation}
We search estimates of the form
\begin{align}
\nm {K_{a,\fy}}{M^\Phi _{(\omega _0)}}
\le C{\dbar}^{-1}\exp \left ( \nm {\fy ''}{M^{\infty ,1}_{(v)}}\right )
\nmm a ,
\label{Eq:KernelEst}
\intertext{and}
\nm {\op _\fy (a)}{M^{\Phi ^*}_{(\omega _1)}\to M^{\Phi}_{(\omega _2)}}
\le
C{\dbar}^{-1}\exp \left ( \nm {\fy ''}{M^{\infty ,1}_{(v)}}\right )\nmm a .
\label{Eq:OpEst}
\end{align}

\par

\begin{thm}\label{Thm:Cont2}
Let $s>1$, $N=d_2+d_1$, $\Phi$ be a Young function such 
that $1<q_\Phi \le
p_\Phi <\infty$, $q_\Phi =p_\Phi =1$ or $q_\Phi 
=p_\Phi =\infty$, let
$\phi \in \Sigma _s(\rr {N+m})\setminus 0$,
$\omega$, $\omega _j$, $j=0,1,2$, $v$ and 
$\fy$ be as in
\eqref{Eq:WeightsIneq} and 
\eqref{Eq:PhaseFuncCond}.
Also let $a\in \Sigma _s'(\rr {N+m})$
be such that one of the
following conditions holds:
\begin{itemize}
\item[{\rm{(i)}}] $\nmm a$
in \eqref{Eq:NormSymb1} is finite;

\vrum

\item[{\rm{(ii)}}] $\nmm a$
in \eqref{Eq:NormSymb2} is finite
and that $|\det (\fy ''_{\zeta ,\zeta})|
\ge \dbar$ for some $\dbar >0$.
\end{itemize}
Then the following is true:
\begin{enumerate}
\item the kernel $K_{a,\fy}$ of $\op _\fy (a)$ belongs to
$M^\Phi _{(\omega _0)}(\rr d)$, and \eqref{Eq:KernelEst} holds
for some constant $C$ which is
independent of $a\in \Sigma _s '(\rr {N+m})$
and $\fy \in C (\rr {N+m})$;

\vrum

\item the definition of $\op _\fy (a)$ extends uniquely to a continuous operator from
$M^{\Phi ^*}_{(\omega _1)}(\rr {d_1})$ to $M^{\Phi}_{(\omega _2)}(\rr {d_2})$, and
\eqref{Eq:OpEst} holds.
\end{enumerate}
\end{thm}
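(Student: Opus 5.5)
Part (2) follows from part (1) and Proposition \ref{Prop:KernelOrlModCont}. By the first inequality in \eqref{Eq:WeightsIneq}, the weights $\omega _0$, $\omega _1$, $\omega _2$ satisfy the hypothesis of that proposition. Hence $\op _\fy (a)=T_{K_{a,\fy}}$ maps $M^{\Phi ^*}_{(\omega _1)}$ continuously into $M^{\Phi}_{(\omega _2)}$ with operator norm $\lesssim \nm {K_{a,\fy}}{M^\Phi _{(\omega _0)}}$, and \eqref{Eq:OpEst} follows from \eqref{Eq:KernelEst}. The case $q_\Phi =p_\Phi =\infty$, where neither $\Phi$ nor $\Phi ^*$ satisfies $\Delta _2$, needs a separate duality argument with $M^1$ and $M^\infty$ (the classical case), and I will treat it that way. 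So the real work is part (1).

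For part (1) I follow the strategy of \cite{ToCoGa}. The kernel is
$$
K_{a,\fy}(x,y)=(2\pi )^{-\frac 12(d+m)}\int a(x,y,\zeta )e^{i\fy (x,y,\zeta )}\, d\zeta ,
$$
so it is the pullback under the projection $(x,y,\zeta )\mapsto (x,y)$ of $a\, e^{i\fy}$. First, take a window $\phi =\phi _1\otimes \phi _2$ on $\rr {N+m}$ with $\phi _2\in \Sigma _s(\rr m)$ and $\int \phi _2=1$. Writing $\Psi (X)=e^{i\fy (X)}$ and localizing around $X$, the partial $\zeta$-integration gives
$$
|V_{\phi _1}K_{a,\fy}(x,y,\xi ,\eta )|\le \int |V_\phi (a\Psi )(x,y,\zeta ,\xi ,\eta ,0)|\, d\zeta .
$$
Second, expand $\fy$ at each point $X$ by Taylor's formula: $\fy (X+Y)=\fy (X)+\scal {\fy '(X)}Y+R_X(Y)$. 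Then $V_\phi (a\Psi )(X,\cdo )$ is a convolution in the frequency variable of $V_\phi a(X,\cdo )$, shifted by $\fy '(X)$, with $V_{\phi}(e^{iR_X}\phi )$. The remainder $R_X$ is controlled by the second derivatives $\fy ''\in M^{\infty ,1}_{(v)}$. The key lemma, imported from \cite{ToCoGa} and adapted to $\mascP _s$ weights via the condition $\sup _t\nm {v_0(t\cdo )/v_0}{L^\infty}<\infty$, is
$$
\sup _X \nm {V_\phi (e^{iR_X}\phi )v_0}{L^1}\lesssim \exp (C\nm {\fy ''}{M^{\infty ,1}_{(v)}}).
$$
Third, use the $v_0$-moderateness in \eqref{Eq:WeightsIneq} to absorb the convolution, and the second inequality in \eqref{Eq:WeightsIneq} to replace $\omega _0(x,y,\xi ,\eta )$ by $\omega$ evaluated at the shifted point $(\xi -\fy '_x,\eta -\fy '_y,-\fy '_\zeta )$. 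This yields
$$
|V_{\phi _1}K_{a,\fy}(x,y,\xi ,\eta )\,\omega _0|\lesssim e^{C\nm {\fy ''}{}}\int (|V_\phi a\,\omega |*v_0^{-1}G)(x,y,\zeta ,\xi -\fy '_x,\eta -\fy '_y,-\fy '_\zeta )\, d\zeta ,
$$
where $G$ is an $L^1$ kernel.

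The main obstacle is the last step: taking the $L^\Phi$ norm in $(x,y,\xi ,\eta )$ of this $\zeta$-integral with the $X$-dependent shifts.
\begin{itemize}
\item In case (i), I first bound the integrand by $\sup _z$ of $V_\phi a\,\omega$ at $z=-\fy '_\zeta$, which removes the third frequency argument. For fixed $(x,y,\zeta )$ the map $(\xi ,\eta )\mapsto (\xi -\fy '_x,\eta -\fy '_y)$ is a translation, so Lemma \ref{T} (Lebesgue measure is preserved) gives a bound by $H_{a,\omega}(x,y,\zeta )$ in \eqref{Eq:NormSymb1}. A Minkowski-type inequality for the Orlicz norm then moves the $\zeta$-integral inside, giving $\nmm a$. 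The convolution with $G$ is handled by Young's inequality in $L^\Phi$, valid for Young functions.
\item In case (ii), I instead change variables $\zeta \mapsto z=-\fy '_\zeta (x,y,\zeta )$ for fixed $(x,y)$. Since $\fy ''_{\zeta \zeta}$ is continuous and $|\det \fy ''_{\zeta ,\zeta}|\ge \dbar$, this map is a local diffeomorphism with Jacobian at least $\dbar$. Then $d\zeta \le \dbar ^{-1}dz$, which produces the factor $\dbar ^{-1}$, and after taking $\sup _\zeta$ we arrive at \eqref{Eq:NormSymb2}.
\end{itemize}
The delicate point I expect is injectivity of this map, which is needed to bound the $\zeta$-integral by the $z$-integral. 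I would first try to obtain it from the global lower bound on $\det \fy ''_{\zeta ,\zeta}$ together with the boundedness of $\fy ''$ (Hadamard's global inverse theorem). Failing that, I would absorb the multiplicity into the constant by localizing in $\zeta$.

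Throughout, the constants must depend only on $\nm {\fy ''}{M^{\infty ,1}_{(v)}}$ and $\dbar$, as claimed in the statement. Finally, the identity is extended from $a\in \Sigma _s$ to $a\in \Sigma _s'$ by duality and approximation.
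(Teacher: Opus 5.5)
\textbf{Overall route.} Your route is the paper's. You deduce (2) from (1) through Proposition \ref{Prop:KernelOrlModCont}. For (1) you use the pointwise bound of Lemma \ref{Lem:LemmaThmCont2}, the marginal identity of Remark \ref{Rem:LemmaThmCont2} and Minkowski's inequality. You then take either the supremum over $z$ (case (i)) or the substitution $z=-\fy '_\zeta (X)$ (case (ii)).

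Your Hadamard--L\'evy argument for injectivity of $\zeta \mapsto -\fy '_\zeta (x,y,\zeta )$ fills a point the paper passes over silently: $|\det \fy ''_{\zeta ,\zeta}|\ge \dbar$ together with boundedness of $\fy ''$ gives a uniformly bounded inverse Jacobian.

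One small correction: when $q_\Phi =p_\Phi =\infty$ one still has $q_\Phi >1$. So $\Phi ^*$ satisfies the $\Delta _2$-condition (e.g.\ $\Phi ^*(t)=t$ for $\Phi =\Phi _{[\infty ]}$), and Proposition \ref{Prop:KernelOrlModCont} applies directly.

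\textbf{The gap.} It is the sentence ``a Minkowski-type inequality for the Orlicz norm then moves the $\zeta$-integral inside, giving $\nmm a$''. The paper makes exactly the same leap in \eqref{Eq:KernelU2Est}.

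The kernel norm is the \emph{joint} $L^\Phi$ norm on $\rr {2N}$ in $(x,y,\xi ,\eta )$. In contrast, $\nmm a$ in \eqref{Eq:NormSymb1} is \emph{iterated}: $L^\Phi$ in $(\xi ,\eta )$, then $L^1$ in $\zeta$, then $L^\Phi$ in $(x,y)$. Minkowski's inequality only moves the $\zeta$-integral out of the inner norm. You additionally need
\begin{equation*}
\nm F{L^\Phi (\rr {2N})}\lesssim \NM {\nm {F(x,y,\cdo )}{L^\Phi (\rr N)}}{L^\Phi (\rr N)} .
\end{equation*}
This is Fubini when $\Phi (t)=t^p$, and it holds if $\Phi (ab)\lesssim \Phi (a)\Phi (b)$. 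It is false for general Young functions.

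Nor can it be avoided. Take $\fy \equiv 0$, trivial weights, a tensor window and $a=K\otimes \chi$ with $\chi \in \Sigma _s(\rr m)$, $\int \chi \neq 0$. Then estimate (1) under (i) is precisely this inequality for $F=V_\phi K$.

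Now let $K$ be a sum of well-separated Gaussian Gabor atoms located at the points $(t_j,\omega _k)$, $1\le j\le n_1$, $1\le k\le n_2$, a product set in time--frequency. The joint norm of $V_\phi K$ is $\gtrsim 1/\Phi ^{-1}(1/(n_1n_2))$, while the iterated one is $\lesssim 1/(\Phi ^{-1}(1/n_1)\Phi ^{-1}(1/n_2))$.

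Take $\Phi (t)=t^p/\log (1/t)$ near the origin with $p>1$, so that $1<q_\Phi \le p_\Phi <\infty$. Then $\Phi ^{-1}(u)\asymp (u\log (1/u))^{1/p}$, and the ratio grows like $(\log n)^{1/p}$ for $n_1=n_2=n$. The same example with $\fy (x,y,\zeta )=|\zeta |^2/2$ defeats case (ii), since \eqref{Eq:NormSymb2} is iterated in the same way.

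So neither your argument nor the paper's establishes (1) for all $\Phi$ with $1<q_\Phi \le p_\Phi <\infty$. You need either an extra hypothesis on $\Phi$ (power type, or submultiplicativity near the origin), or a version of $\nmm a$ in which $(x,y,\xi ,\eta )$ are measured jointly in $L^\Phi$.
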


\par

For the proofs of these results
we need the following lemma. Here we 
formulate
the action of $\op _\fy (a)$ as
\begin{align}
(\op _\fy (a)f_1,f_2)
&=
\iiint _{\rr {4N+m}}\maclK _{a,\fy}(X,\xi ,\eta )F_1(y,\eta )\overline{F_2(x,\xi )}
e^{-i(\scal x\xi +\scal y\eta )}\, dX d\xi d\eta ,
\label{Eq:FIOAction1}
\intertext{with}
F_1(y,\eta )
&=
V_{\phi _1}f_1(y,-\eta )\omega _1(y,-\eta ),
\ F_2(x,\xi )
=
V_{\phi _2}f_2(x,\xi )/\omega _2(x,\xi )
\label{Eq:FIOAction2}
\end{align}

\par

\begin{lemma}\label{Lem:LemmaThmCont2}
Let $s>1$, $\Phi$ and $\Psi$ be Young functions, 
$N=d_1+d_2$, 
$\omega ,v \in \mascP _s(\rr {2(N+m)})$ be as in
\eqref{Eq:WeightsIneq}, $f_j\in \Sigma _1
(\rr {d_j})$ 
and $\phi \in C_0^\infty (\rr {N+m})\setminus 0$.
Also let $a\in M^{\Phi ,\Psi}_{(\omega )}(\rr {N+m})$
or $a\in M^{\Phi}_{(\omega )}(\rr {N+m})$, and
$f_j\in \mascS (\rr {d_j})$,
$j=1,2$. Then \eqref{Eq:FIOAction1} and 
\eqref{Eq:FIOAction2} hold for some
$\maclK _{a,\fy}$, which satisfies
\begin{multline*}
|\maclK _{a,\fy} (X,\xi ,\eta )|
\le
(G*|V_\phi a(X ,\cdo )\omega (X,\cdo )|)
(\xi -\fy '_x(X),\eta -\fy '_y(X),-\fy '_\zeta (X)),
\end{multline*}
for some non-negative $G\in L^1(\rr {N+m})$
which satisfies
\begin{equation}\label{Eq:GL1Est}
\nm G{L^1}\le C\exp (C\nm {\fy ''}{M^{\infty ,1}_{(v)}}).
\end{equation}
\end{lemma}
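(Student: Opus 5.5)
We follow the strategy of \cite{ToCoGa} and \cite{Bu1}: localize the amplitude with the window, Taylor expand the phase to second order, and separate the quadratic remainder as a multiplier in the frequency variables. Let $\phi _j\in \Sigma _1(\rr {d_j})$ be the windows defining $F_1,F_2$, and take $\phi =\phi _2\otimes \overline{\phi _1}\otimes \chi \in C_0^\infty (\rr {N+m})$ adapted to these windows; this tensor choice is an assumption at this point. Since $f_j\in \Sigma _1$, the inversion formula $f_j=(2\pi )^{-d_j/2}\nm{\phi_j}{L^2}^{-2}\iint V_{\phi _j}f_j(y,\eta )\phi _j(\cdo -y)e^{i\scal \cdo \eta}\,dyd\eta$ is available. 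We also write $a$ via the STFT reconstruction $a=c\iint V_\phi a(X,\Xi )\,\phi (\cdo -X)e^{i\scal \cdo \Xi}\,dXd\Xi$, with $\Xi=(\xi ,\eta ,z)$. Inserting both into $(\op _\fy (a)f_1,f_2)$ and rearranging gives \eqref{Eq:FIOAction1}. The integrand is then determined by the localized phase integral
$$
\int \phi (Y-X)\,\overline{\phi _2(x'-x)}\,\phi _1(y'-y)\, e^{i(\fy (Y)+\scal{Y-X}{\Xi '})}\cdots \,dY .
$$
The oscillating factors are absorbed into $e^{-i(\scal x\xi +\scal y\eta )}$, which is why the $T_\phi$ versus $V_\phi$ normalization matters.

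For the phase, we write
$$
\fy (X+Y)=\fy (X)+\scal{\fy '(X)}{Y}+\psi _X(Y),
\qquad
\psi _X(Y)=\int _0^1(1-t)\scal{\fy ''(X+tY)Y}{Y}\,dt .
$$
The linear term translates the frequency variables to $(\xi -\fy '_x(X),\eta -\fy '_y(X),-\fy '_\zeta (X))$, and the constant $\fy (X)$ contributes a unimodular factor. The kernel $\maclK _{a,\fy}(X,\xi ,\eta )$ thus takes the form $\int V_\phi a(X,\Xi -\Theta )\,\widehat{\Psi _X}(\Theta )\,d\Theta$, evaluated at the shifted point. Here $\Psi _X(Y)=\chi _0(Y)e^{i\psi _X(Y)}$, with $\chi _0$ a product of the compactly supported windows. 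We therefore set
$$
G(\Theta )=\sup _X|\widehat{\Psi _X}(\Theta )|,
$$
which yields the pointwise bound in the lemma after taking absolute values and inserting $\omega$ through \eqref{Eq:WeightsIneq}, i.e. $\omega (X,\Xi )\lesssim \omega (X,\Xi -\Theta )v_0(\Theta )$. This forces us to work with $Gv_0$ rather than $G$, or equivalently to require the weighted bound $\nm {Gv_0}{L^1}$; we absorb this into $G$. The hypotheses that $a$ lies in $M^{\Phi ,\Psi}_{(\omega )}$ or $M^\Phi _{(\omega )}$ are only needed to justify that the integrals converge absolutely for $f_j\in \Sigma _1$. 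This follows from Proposition \ref{stftGelfand2}, since the weights are in $\mascP _s$ with $s>1$ and $V_{\phi _j}f_j$ decays like $e^{-r(|x|+|\xi |)}$ for every $r$.

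The main obstacle is \eqref{Eq:GL1Est}: an $L^1$ bound, uniform in $X$ and with weight $v_0$, on the Fourier transforms of $\chi _0e^{i\psi _X}$. We plan to show that $\{\chi _0 e^{i\psi _X}\}_X$ is bounded in $M^{1,1}_{(v)}$-type (Feichtinger algebra) norms, or more precisely in $\mascF L^1_{(v_0)}$ after localization. The key facts we will use are:
\begin{enumerate}
\item $\fy ''\in M^{\infty ,1}_{(v)}$ implies $\sup _X\nm{\chi _1 \fy ''(X+\cdo )}{\mascF L^1_{(v_0)}}\lesssim \nm{\fy ''}{M^{\infty ,1}_{(v)}}$ for a cutoff $\chi _1$;
\item $\mascF L^1_{(v_0)}$ is a Banach algebra because $v_0$ is submultiplicative;
\item the averaging $\int _0^1(1-t)g(X+tY)\,dt$ and the multiplication by $Y_jY_k$ preserve localized $\mascF L^1_{(v_0)}$ bounds. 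The dilation $t$ is handled by the condition $\sup _t\nm{v_0(t\cdo )/v_0}{L^\infty}<\infty$ in \eqref{Eq:WeightsIneq}.
\end{enumerate}
Hence $\nm{\chi _0\psi _X}{\mascF L^1_{(v_0)}}\le C\nm{\fy ''}{M^{\infty ,1}_{(v)}}$. Expanding $e^{i\psi _X}$ as a power series in the algebra gives
$$
\nm{\chi _0 e^{i\chi _1\psi _X}}{\mascF L^1_{(v_0)}}\le C e^{C\nm{\fy ''}{M^{\infty ,1}_{(v)}}},
$$
uniformly in $X$. Because $v_0\in \mascP _s$ with $s>1$ is subexponential, $e^{i\psi}$ still belongs to the weighted algebra; we must verify this by the power series estimate rather than by a Wiener-type argument. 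Taking the supremum in $X$ inside $\Theta$ requires a little care, and we will handle it by dominating $|\widehat{\Psi _X}|$ by a single $L^1_{(v_0)}$ function. This uses the uniform Gevrey/compact-support control to get decay in $\Theta$ independent of $X$, and is the most delicate step.
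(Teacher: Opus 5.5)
Your overall route is the paper's. The paper imports from \cite{ToCoGa} (Lemmas 2.2 and 2.3 there) the unweighted bound $|\maclK ^0_{a,\fy}(X,\xi ,\eta )|\le (G_0*|V_\phi a(X,\cdo )|)(\Theta )$, obtained by localizing, Taylor expanding $\fy$ at $X$ and treating $e^{i\fy _{2,X}}$ as a frequency multiplier. It then inserts the weights through \eqref{Eq:WeightsIneq} with $G=G_0v_0$, just as you do. The gap is in the step you flag as delicate, which the paper itself delegates to \cite[Lemma 2.3]{ToCoGa}. The lemma needs one majorant $G$ \emph{independent of $X$}. This is used later, e.g. for $J_1,J_2$ in the proof of Theorem \ref{Thm:Cont1} and in Proposition \ref{Prop:OrliczKernelsFIO}, where $X$ is changed together with the frequency variables. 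Your Banach algebra argument in $\mascF L^1_{(v_0)}$ only gives $\sup _X\nm {\widehat{\Psi _X}}{L^1_{(v_0)}}\le Ce^{C\nm {\fy ''}{M^{\infty ,1}_{(v)}}}$. A family bounded in $L^1$ need not have an integrable supremum; translates of a fixed function already fail. Your proposed remedy, decay in $\Theta$ from Gevrey regularity and compact support, does not work. The function $\psi _X(Y)=\fy (X+Y)-\fy (X)-\scal {\fy '(X)}Y$ is only as regular as $\fy$, i.e. $C^2$ with second derivatives in $M^{\infty ,1}_{(v)}$. Smoothness alone therefore gives only the uniform bound $|\widehat{\Psi _X}(\Theta )|\lesssim (1+|\Theta |)^{-2}$, which is not even integrable on $\rr {N+m}$, let alone against $v_0$. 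The Gevrey regularity of the cutoff controls $\widehat{\chi _0}$ only, and this is lost after convolving with the transform of the rough factor.

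The missing idea is to run the algebra argument on pointwise majorants rather than on norms. This is exactly what the $M^{\infty ,1}$ structure permits, since it takes the supremum in $X$ before integrating in $\Xi$.

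Take $\chi \in \maclD _{0,s}$ equal to $1$ on $\supp \chi _0$, with $\supp \chi$ a ball centred at $0$, so that $\Psi _X=\chi _0e^{i\chi \psi _X}$. Take also $\chi _1\in \maclD _{0,s}$ equal to $1$ on $\supp \chi$. Put $H(\Xi )=\sup _X|\mascF (\chi _1\fy ''(X+\cdo ))(\Xi )|=\sup _X|V_{\overline{\chi _1}}\fy ''(X,\Xi )|$, entrywise. Since $v(X,\Xi )=v_0(\Xi )$, the definition of the norm gives $\nm H{L^1_{(v_0)}}\lesssim \nm {\fy ''}{M^{\infty ,1}_{(v)}}$. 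So your fact (1) should be used in this pointwise form.

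Since $|\widehat{uw}|\lesssim |\widehat u|*|\widehat w|$, majorants of products are convolutions of majorants, and $L^1_{(v_0)}$ is a convolution algebra. For $t\in (0,1]$ one has $\chi (Y)\fy ''(X+tY)=\chi (Y)\chi _1(tY)\fy ''(X+tY)$. The function $Y\mapsto \chi _1(tY)\fy ''(X+tY)$ has the $X$-independent majorant $t^{-n}H(\cdo /t)$, $n=N+m$, whose $L^1_{(v_0)}$ norm is at most $\sup _t\nm {v_0(t\cdo )/v_0}{L^\infty}\nm H{L^1_{(v_0)}}$. Convolving with $|\mascF (Y_jY_k\chi )|$, summing over $j,k$ and integrating against $(1-t)\, dt$ gives an $X$-independent majorant $H_1\in L^1_{(v_0)}$ of $|\mascF (\chi \psi _X)|$. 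The exponential series then gives the $X$-independent majorant $|\widehat{\chi _0}|*\sum _k(cH_1)^{*k}/k!$ of $|\widehat{\Psi _X}|$ (with $(cH_1)^{*0}=\delta _0$), whose $L^1_{(v_0)}$ norm is at most $Ce^{C\nm {\fy ''}{M^{\infty ,1}_{(v)}}}$.

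Two smaller corrections. First, the windows $\phi _j$ and all cutoffs must be compactly supported Gevrey functions in $\maclD _{0,s}$, as in the paper. This is needed both for the localization and so that their Fourier transforms lie in $L^1_{(v_0)}$ for subexponential $v_0$. Nonzero elements of $\Sigma _1$ are real analytic, so your choice $\phi _j\in \Sigma _1$ cannot give a compactly supported $\phi$. Second, reconstructing $a$, $f_1$ and $f_2$ independently produces different base points. To get \eqref{Eq:FIOAction1} with $V_\phi a$ evaluated at the same $X=(x,y,\zeta )$ as $F_1(y,\eta )$ and $F_2(x,\xi )$, insert instead the continuous partition of unity $\int \phi (Y-X)\phi _1(y'-y)\phi _2(x'-x)\, dX=1$, $Y=(x',y',\zeta ')$. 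This is what the normalization \eqref{Eq:CouplWindFunc} provides.
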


\par

\begin{proof}
Let
$$
\Theta =(\xi -\fy '_x(X),\eta -\fy '_y(X),-\fy '_\zeta (X)),
$$
and choose $\phi _j\in \maclD _{0,s} (\rr {d_j})$
such that
\begin{equation}
\label{Eq:CouplWindFunc}
\begin{alignedat}{2}
0 &\le \phi _j ,\phi , &
\quad
\int _{\rr {d_j}}\phi _j(x_j)\, dx_j
&=
\int _{\rr {N+m}} \phi (X)\phi _1(y)
\phi _2(x)\, dX =1,
\\[1ex]
j&=1,2, &\qquad  X &= (x,y,\zeta )\in \rr {N+m}.
\end{alignedat}
\end{equation}
By (2.5), (2.15), Lemma 2.2, Lemma 2.3
in \cite{ToCoGa},
and straight-forward computations, it follows that \eqref{Eq:FIOAction1}
holds with $\maclK _{a,\fy}^0$,
$$
F_{0,1}(y,\eta )
=
V_{\phi _1}f_1(y,-\eta )
\quad \text{and}\quad F_{0,2}(x,\xi )
=
V_{\phi _2}f_2(x,\xi )
$$
in place of $\maclK _{a,\fy}$, $F_1$ and $F_2$, where $\maclK _{a,\fy}^0$
satisfies
$$
|\maclK _{a,\fy} ^0(X,\xi ,\eta )|
\le
(G_0*|V_\phi a(X ,\cdo )|) (\Theta ),
$$
with $G_0\in L^1_{(v_0)}(\rr {d+m})$ satisfying
$$
\nm {G_0}{L^1_{(v_0)}}\le C\exp (C\nm {\fy ''}{M^{\infty ,1}_{(v)}}).
$$
The details are left for the reader. Then \eqref{Eq:FIOAction1}
holds if
$$
\maclK _{a,\fy} (X,\xi ,\eta )
=
\maclK _{a,\fy} ^0(X,\xi ,\eta )\omega _2(x,\xi )/ \omega _1(y,-\eta).
$$

\par

A combination of the latter relationships with \eqref{Eq:WeightsIneq} gives
\begin{align*}
|\maclK _{a,\fy} (X,\xi ,\eta )|
&\le
(G_0*|V_\phi a(X ,\cdo )|) (\Theta )\omega _2(x,\xi )/ \omega _1(y,-\eta)
\\[1ex]
&\lesssim
(G_0*|V_\phi a(X ,\cdo )|) (\Theta )\omega (X,\Theta)
\\[1ex]
&\lesssim
(G*|V_\phi a(X ,\cdo )\omega (X,\cdo )|) (\Theta ),
\end{align*}
where $G=G_0v_0$ satisfies \eqref{Eq:GL1Est}. This gives the
result.
\end{proof}

\par

\begin{rem}
Let $a$, $\phi$, $\phi _j$, $\fy$ and $\Theta$
be the same as in Lemma \ref{Lem:LemmaThmCont2}
and its proof, let $v$ and $v_0$ be as in
\eqref{Eq:WeightsIneq} and let
$$
v_1(x,y,\zeta ,\Xi )=v_0(x,y,\zeta ).
$$
Then \cite[Lemma 2.2]{ToCoGa} shows that
\begin{align}
\maclK _{a,\fy}(X,\xi ,\eta )\cdot \frac {\omega _1(y,-\eta )}{\omega _2(x,\xi )}
&=
(2\pi )^{\frac m2}
((\mascF (e^{i\fy _{2,X}}\phi ))*(T_\phi a))
(\Theta )
\\[1ex]
\fy _{2,X}
&=
\psi (X)\int _0^1(1-t)
\scal {\fy ''(X+tY)Y}Y\, dt,
\\[1ex]
\psi
&\in
C_0^\infty (\rr {d+m}),
\quad
\psi =1\ \text{on}\ \supp \phi .
\end{align}
Here $T_\phi$ is given by \eqref{Eq:TTransfDef}.
The function $G_0$ in the proof of Lemma 
\ref{Lem:LemmaThmCont2} can then be chosen
as
$$
G_0 =
|\mascF (e^{i\fy _{2,X}}\phi )|
$$

\par

By \cite[Lemma 2.3]{ToCoGa} one has
$$
|\mascF (e^{i\fy _{2,X}})|
\le
(2\pi )^{\frac 12(d+m)}
\delta _0+\hat b,
$$
where $b$ satisfies
$$
\nm b{M^1_{(v)}} \le \exp
\left (
C\nm {\fy ''}{M^{\infty ,1}_{(v)}}
\right )
$$
It follows that
$$
G_0 =
|\mascF (e^{i\fy _{2,X}}\phi )|
=(2\pi )^{-\frac 12(d+m)}
|\mascF (e^{i\fy _{2,X}})*\widehat \phi |
\lesssim
|\widehat \phi |+|\widehat b*\widehat \phi |.
$$
Since
$$
\widehat b\in M^1_{(v_1)}(\rr {d+m}) \subseteq
L^1_{(v_0)}(\rr {d+m})
\quad \text{and}\quad
\widehat \phi \in \mascS (\rr {d+m})
\subseteq L^1_{(v_0)}(\rr {d+m}),
$$
it follows that $G_0\in L^1_{(v_0)}(\rr {d+m})$,
as it was used in the proof of Lemma
\ref{Lem:LemmaThmCont2}.
\end{rem}

\par

\begin{rem}
\label{Rem:LemmaThmCont2}
Let $\phi$ and $\phi _j$ be as in Lemma
\ref{Lem:LemmaThmCont2}, and choose $\psi _j\in \maclD _{0,s}(\rr {d_j})$
such that $(\phi _j,\psi _j)=1$, $j=1,2$. Also let $\psi =\psi _2\otimes \psi _1$.
Then it follows from \eqref{Eq:FIOAction1} and an application
of Fourier's inversion formula that
\begin{multline}
\label{Eq:KernelRel}
V_\psi K_{a,\fy}(x,y,\xi ,\eta )\cdot \frac {\omega _2(x,\xi )}{\omega _1(y,-\eta )}
\\[1ex]
=
Ce^{-i(\scal x\xi +\scal y\eta )}\int _{\rr m}\maclK _{a,\fy}(X,\xi ,\eta )\, d\zeta ,
\quad X=(x,y,\zeta ).
\end{multline}
\end{rem}

\par

\begin{proof}[Proof of Theorem \ref{Thm:Cont1}]
We shall follow the proof of \cite[Theorem 2.1]{ToCoGa} when
proving (2). If $\Phi (t_0)=0$ for some $t_0>0$, then
$M^\Phi _{(\omega _j)}(\rr d)=
M^\infty _{(\omega _j)}(\rr d)$, and the result follows 
from \cite[Theorem 2.1]{ToCoGa}. Therefore suppose that
$\Phi (t)>0$ when $t>0$. Then $\mascS (\rr d)$ is dense
in $M^\Phi _{(\omega _j)}(\rr d)$.

\par

Suppose that $f_1,f_2\in \mascS (\rr d)$ satisfy
\begin{equation}
\label{Eq:Normalizedfj}
\nm {f_1}{M^\Phi _{(\omega _1)}}
=
\nm {f_2}{M^\Psi _{(\omega _2)}}
=1
\end{equation}
Then Lemma \ref{Lem:LemmaThmCont2} gives
\begin{align*}
|(\op _\fy (a)f_1,f_2)|
&\le
\iiint _{\rr {4d+m}}(G*H_{a,\omega})(\Theta )
|F_1(y,\eta )|\, |F_2(x,\xi )|\, dXd\xi d\eta ,
\intertext{where}
H_{a,\omega}
&=
\sup _{X\in \rr {4d+m}}|V_\phi a(X,\cdo ) \omega (X,\cdo )|
\intertext{and}
\Theta
&=
(\xi -\fy '_x(X),\eta -\fy '_y(X),-\fy '_\zeta (X)),
\quad
X=(x,y,\zeta ).
\end{align*}

\par

By letting $t_1=|F_1(y,\eta )|$ and $t_2=|F_2(x,\xi )|$
in $t_1t_2\le \Phi (t_1)+\Psi (t_2)$, we obtain
\begin{align}
|(\op _\fy (a)f_1,f_2)|
&\le
J_1+J_2,
\label{Eq:OpActionSplit}
\intertext{where}
J_1
&=
\iiint _{\rr {4d+m}}(G*H_{a,\omega})(\Theta )
\Phi (|F_1(y,\eta )|)\, dXd\xi d\eta 
\notag
\intertext{and}
J_2
&=
\iiint _{\rr {4d+m}}(G*H_{a,\omega})(\Theta )
\Psi (|F_2(x,\xi )|)\, dXd\xi d\eta 
\notag
\end{align}

\par

We need to estimate $J_1$ and $J_2$. By taking
$z=\fy '_\zeta (X)$, $\eta _0$, $y$, $\xi$ and $\eta$
as new variables of integrations,
and using \eqref{Eq:DetphiCond}, it follows that
\begin{align*}
J_1
&\le
\dbar ^{\, -1}
\iiint _{\rr {4d+m}}(G*H_{a,\omega})
(\xi -\kappa (y,z,\eta _0),\eta -\eta _0,z)
\Phi (|F_1(y,\eta )|)
\, dy\,  dz\,  d\xi\,  d\eta\,  d\eta _0 
\\[1ex]
&=
\dbar ^{\, -1}
\iiint _{\rr {4d+m}}(G*H_{a,\omega})
(\xi ,\eta -\eta _0,z)
\Phi (|F_1(y,\eta )|)\, dy\, dz\, d\xi\, d\eta\, d\eta _0 
\\[1ex]
&=
\dbar ^{\, -1}
\nm {G*H_{a,\omega}}{L^1}
\iint \Phi (|F_1(y,\eta )|)\, dy d\eta 
\le
C\dbar ^{\, -1}
\nm {G*H_{a,\omega}}{L^1}
\end{align*}
for some continuous function $\kappa$ and constant
$C>0$.
In the last inequality we have used the fact that
$$
\iint \Phi (|F_1(y,\eta )|)\, dy d\eta \le 1
$$
when $\nm {f_1}{M^\Phi _{(\omega _1)}}=1$.
It follows by Young's
inequality, \eqref{Eq:WeightsIneq} and Lemma
\ref{Lem:LemmaThmCont2}
that
$$
\nm {G*H_{a,\omega}}{L^1}
\le 
\nm G{L^1}\nm {H_{a,\omega}}{L^1}
\le
C\nm a{M^{\infty ,1}_{(\omega )}}
\exp (C\nm {\fy ''}{M^{\infty ,1}_{(v)}}),
$$
for some constant $C>0$. Hence
\begin{align}
J_1 &\le C\dbar ^{\, -1}\nm a{M^{\infty ,1}_{(\omega )}}
\exp (C\nm {\fy ''}{M^{\infty ,1}_{(v)}}),
\label{Eq:J1EstThm1}
\intertext{for some constant $C>0$.
\newline
\indent
If we instead take $x$, $z=\fy '_\zeta (X)$,
$\xi$, $\eta$ and $\xi _0=\fy '_x(X)$ as new
variables of integrations, it follows by
similar arguments that}
J_2 &\le C\dbar ^{\, -1}\nm a{M^{\infty ,1}_{(\omega )}}
\exp (C\nm {\fy ''}{M^{\infty ,1}_{(v)}}),
\label{Eq:J2EstThm1}
\end{align}


A combination of \eqref{Eq:OpActionSplit},
\eqref{Eq:J1EstThm1} and \eqref{Eq:J2EstThm1}
now gives
$$
|(\op _\fy (a)f_1,f_2)|
\le
C\dbar ^{\, -1}\nm a{M^{\infty ,1}_{(\omega )}}
\exp (C\nm {\fy ''}{M^{\infty ,1}_{(v)}})
$$
when \eqref{Eq:Normalizedfj} holds.
The result now follows from this estimate,
by homogeneity, duality and the fact that
$\mascS (\rr d)$ is dense in
$M^\Phi _{(\omega _1)}(\rr d)$.
\end{proof}

\par

\begin{proof}[Proof of Proposition
\ref{Prop:Cont1Comp}]
Let $X=(x,y,\zeta )$,
\begin{align*}
\vartheta (X,\xi ,\eta ,z)
&=
\omega (X,\xi ,\eta ,z)
(1+|x|+|y|+|z|+|\xi |)^3
\intertext{and}
\vartheta _2(x,\xi )
&=
\omega _2(x,\xi )(1+|x|)(1+|\xi |).
\end{align*}
Since $|\fy ''(x,y,\zeta )|$ is a
bounded function, it follows that
$$
|\fy '_x|=|\fy '_x(X)|
\lesssim
(1+|x|+|y|+|\zeta |).
$$
This gives
\begin{align*}
&\omega
(X,\xi -\fy '_x,\eta -\fy '_y,
-\fy '_\zeta )(1+|x|)(1+|\xi |)
\\[1ex]
&\le
\omega
(X,\xi -\fy '_x,\eta -\fy '_y,
-\fy '_\zeta )(1+|x|)(1+|\xi -\fy '_x|)
(1+|\fy '_x |)
\\[1ex]
&\lesssim
\omega
(X,\xi -\fy '_x,\eta -\fy '_y,
-\fy '_\zeta )(1+|x|+|y|+|\zeta |+
|\xi -\fy '_x|)^3
\\[1ex]
&=
\vartheta
(X,\xi -\fy '_x,\eta -\fy '_y,
-\fy '_\zeta ).
\end{align*}
A combination of these estimates and
\eqref{Eq:WeightsIneq} shows that
\eqref{Eq:WeightsIneq} holds with
$\vartheta$ and $\vartheta _2$ in place
of $\vartheta$ and $\vartheta _2$.

\par

Since $a\in \Sigma _1(\rr {2d+N})$,
it follows that $a\in M^{\infty 
,1}_{(\vartheta )}(\rr {2d+N})$.
Hence Theorem \ref{Thm:Cont1}
shows that 
\begin{equation}
\label{Eq:ContOfFIO1}
\op _\fy (a) :
M^\Phi _{(\omega _1)}(\rr d)
\to
M^\Phi _{(\vartheta _2)}(\rr d)
\end{equation}
is continuous.

\par

Since the inclusion map 
\begin{equation}
\label{Eq:CompOfFIO1}
\boldsymbol \iota
: M^\Phi _{(\vartheta _2)}(\rr d)
\to
M^\Phi _{(\omega _2)}(\rr d),
\end{equation}
is compact, in view of
\cite[Proposition 4.8]{ToPfTe},
it follows that
$$
\op _\fy (a) =\boldsymbol \iota
\circ
\op _\fy (a) :
M^\Phi _{(\omega _1)}(\rr d)
\to
M^\Phi _{(\omega _2)}(\rr d)
$$
is compact, giving the result.



\end{proof}

\par

\begin{proof}[Proof of Theorem
\ref{Thm:Cont1Comp}]
By Theorem \ref{Thm:Cont1} we have
$$
\nm {\op _\fy (a)}{M^\Phi _{(\omega _1)}
\to M^\Phi _{(\omega _2)}}
\lesssim
\nm {a}{M^{\infty ,1}_{(\omega )}}.
$$
Since $a\in M^{\sharp ,1}_{(\omega )}
(\rr {2d+N})$, it follows from
Lemma \ref{Lemma:ComplModSpace}
and Proposition
\ref{Prop:Cont1Comp},
that for some $a_j\in \Sigma _1(\rr 
{2d+N})$, one has
$$
\nm {a-a_j}{M^{\infty ,1}_{(\omega )}}
\to 0,
\quad \text{as}\quad j\to \infty .
$$
A combination of these estimates gives
\begin{align*}
\nm {\op _\fy (a)-\op _\fy (a)}
{M^\Phi _{(\omega _1)}
\to M^\Phi _{(\omega _2)}}
\lesssim
\nm {a-a_j}{M^{\infty ,1}_{(\omega )}}
\to 0,
\quad \text{as}\quad j\to \infty .
\end{align*}
Hence, $\op _\fy (a)$ can be approximated
in norm by $\op _\fy (a_j)$. Since
$a_j\in \Sigma _1(\rr {2d+N})$,
it follows that $\op _\fy (a_j)$. 
A combination of these properties shows
that $\op _\fy (a)$ is compact,
and the result follows.
\end{proof}

\par


\par

%

\begin{proof}[Proof of Theorem \ref{Thm:Cont2}]
The result follows from \cite[Theorem 2.7]{ToCoGa}
when $p_\Phi =q_\Phi$. Therefore we assume that
$1<q_\Phi \le p_\Phi <\infty$.

First we suppose that (i) holds. Let $\phi$, $\phi _j$ and $G$ be as in
Lemma \ref{Lem:LemmaThmCont2}, and let
$$
U=|V_\phi a \cdot \omega | .
$$
By Lemma \ref{Lem:LemmaThmCont2},
Remark \ref{Rem:LemmaThmCont2} and Minkowski's inequality we obtain
\begin{align}
\nm {K_{a,\fy}}{M^\Phi}
&\lesssim
\NM {\int _{\rr m}|\maclK _{a,\fy} (X,\xi ,\eta )|\, d\zeta }{L^\Phi}
\notag
\\[1ex]
&\lesssim
\NM {\int _{\rr m}(G*U(X,\cdo ))(\Theta )\, d\zeta }{L^\Phi}
\notag
\\[1ex]
&\le
\NM {\int _{\rr m}U_2(\cdo ,\zeta )\, d\zeta}{L^\Phi},
\label{Eq:KernelU2Est}
\intertext{where}
U_2(X)
&=
U_2(x,y,\zeta )
=
\nm {U_1(X,\cdo -(\fy '_x(X),\fy '_y(X)),-\fy '_\zeta (X) )}{L^\Phi}
\notag
\\[1ex]
&=
\nm {U_1(X,\cdo ,-\fy '_\zeta (X) )}{L^\Phi},
\label{Eq:U2RelU_1}
\intertext{with}
U_1(X,\xi ,\eta ,z)
&=
(G*U(X,\cdo ))(\xi ,\eta ,z).
\label{Eq:U1Def}
\end{align}
By using Minkowski's inequality again, we obtain
\begin{align*}
\int _{\rr m}U_2(X)\, d\zeta
&\le
\nm G{L^1}\int _{\rr m}\nm {U(X,\cdo ,-\fy '_\zeta (X) )}{L^\Phi}\, d\zeta
\\[1ex]
&\le
\nm G{L^1}\int _{\rr m}\sup _{z\in \rr m}\left (
\nm {U(X,\cdo ,z)}{L^\Phi}
\right )
\, d\zeta .
\end{align*}

\par

A combination of these estimates now gives \eqref{Eq:KernelEst},
and thereby (1). The assertion (2) now follows from (1) and Proposition
\ref{Prop:KernelOrlModCont}, and we have proved the result when (i)
is fulfilled.

\par

Next suppose that (ii) holds. Then for $U_2$ in \eqref{Eq:U2RelU_1}
and \eqref{Eq:U1Def} we have
\begin{align*}
\int _{\rr m}U_2(x,y,\zeta )\, d\zeta
&=
\int _{\rr m}
\nm {U_1(x,y,\zeta ,\cdo ,-\fy '_\zeta (X) )}{L^\Phi}
\, d\zeta
\\[1ex]
&\le
\frac 1{\dbar}
\int _{\rr m}
\nm {U_1(x,y,\zeta (x,y,z),\cdo ,z)}{L^\Phi} \, dz
\\[1ex]
&\le
\frac 1{\dbar}
\int _{\rr m}
\sup _{\zeta \in \rr m}\left (
\nm {U_1(x,y,\zeta ,\cdo ,z)}{L^\Phi} \right )\, dz .
\end{align*}
Here in, the first inequality we have taken $z=-\fy '_\zeta (X) $
as new variable of integration, and used that
$|\det (\fy ''_{\zeta ,\zeta})| \ge \dbar$.

\par

A combination of these estimates Proposition
\ref{Prop:KernelOrlModCont} both (1) and (2) in the case when (ii) holds,
giving the result.
\end{proof}

\par



\par

\begin{rem}
Let $s>1$, $d=d_2+d_1$, $\Phi$ be a Young
function such that $1<q_\Phi \le
p_\Phi <\infty$, $q_\Phi =p_\Phi =1$ or $q_\Phi =p_\Phi =\infty$, let
$\phi \in \maclD _{0,s}(\rr {d+m})\setminus 0$,
$\omega$, $\omega _j$, $j=0,1,2$, $v$ and 
$\fy$ be as in \eqref{Eq:WeightsIneq} and 
\eqref{Eq:PhaseFuncCond}.
Also let $a\in \Sigma _s'(\rr {d+m})$ be such that 
$\nmm a$
in \eqref{Eq:NormSymb3} is finite,
and that $|\det (\fy ''_{\zeta ,\zeta})|
\ge \dbar$. Then the following is true:
\begin{enumerate}
\item the kernel $K_{a,\fy}$ of $\op _\fy (a)$ 
belongs to
$M^\Phi _{(\omega _0)}(\rr d)$, and 
\eqref{Eq:KernelEst} holds
for some constant $C$ which is independent of $a\in 
\mascS '(\rr {d+m})$ and
$\fy \in C (\rr {d+m})$;

\vrum

\item the definition of $\op _\fy (a)$ extends uniquely to a continuous operator from
$M^{\Phi ^*}_{(\omega _1)}(\rr {d_1})$ to $M^{\Phi}_{(\omega _2)}(\rr {d_2})$, and
\eqref{Eq:OpEst} holds.
\end{enumerate}
\end{rem}

\par

\section{Schatten-von Neumann
properties for Fourier
integral operators}\label{sec3} 

\par

In this section we discuss Orlicz Schatten-von Neumann
properties for Fourier integral operators with amplitudes,
essentially only depending on two variables,
belonging to Orlicz modulation spaces. In the first part we
consider Fourier integral operators of the form
\begin{align}
\op _\fy (a)f(x)
&=
(2\pi )^{-d}
\iint _{\rr {2d}}a(x,\zeta
)f(y)e^{i\fy (x,y,\zeta )}\, dyd\zeta .
\label{Eq:FIntOp2Again}
\intertext{Thereafter we perform some
extensions to Fourier integral operators of
the form}
\op _{A,\fy} (a)f(x)
&=
(2\pi )^{-d}
\iint _{\rr {2d}}a(x-A(x-y),\zeta
)f(y)e^{i\fy (x,y,\zeta )}\, dyd\zeta .
\tag*{(\ref{Eq:FIntOp2Again})$'$}
\label{Eq:FIntOp3Again}
\intertext{Here $A$ can be any real $d\times d$ matrix. (See also
\eqref{Eq:FIntOp2} and \eqref{Eq:FIntOp3} from the introduction.)
\linebreak
\indent
In the passage from the operators in \eqref{Eq:FIntOp2Again}
to \ref{Eq:FIntOp3Again} it is sometimes convenient
to also let the phase function $\fy$ in \ref{Eq:FIntOp3Again}
depend on $A$ as}
\op _{A,\fy} (a)f(x)
&=
(2\pi )^{-d}
\iint _{\rr {2d}}a(x-A(x-y),\zeta
)f(y)e^{i\fy _A(x,y,\zeta )}\, dyd\zeta ,
\tag*{(\ref{Eq:FIntOp2Again})$''$}
\label{Eq:FIntOp3AgainB}
\intertext{with}
\fy _A(x,y,\zeta )
&=
\fy (x-A(x-y),y-A(x-y),\zeta ).
\label{Eq:fyAFaseFunc}
\end{align}

%

\par

We observe that the map which takes amplitudes to the kernels of
these operators are formally given by
\begin{align}
a &\mapsto K_{a,\fy}(x,y)
\equiv
(2\pi )^{-d}
\int _{\rr {d}}a(x,\zeta
)e^{i\fy (x,y,\zeta )}\, d\zeta ,
\label{Eq:FIntOp2Kernel}
\\[1ex]
a &\mapsto K_{a,\fy ,A}(x,y)
\equiv
(2\pi )^{-d}
\int _{\rr {d}}a(x-A(x-y),\zeta
)e^{i\fy (x,y,\zeta )}\, d\zeta ,
\tag*{(\ref{Eq:FIntOp2Kernel})$'$}
\label{Eq:FIntOp3Kernel}
\intertext{and}
a &\mapsto K_{a,\fy ,A}(x,y)
\equiv
(2\pi )^{-d}
\int _{\rr {d}}a(x-A(x-y),\zeta
)e^{i\fy _A(x,y,\zeta )}\, d\zeta ,
\tag*{(\ref{Eq:FIntOp2Kernel})$''$}
\label{Eq:FIntOp4Kernel}
\end{align}
provided we may interpret the integrals in some sense.

\par

Our first result on this is the following. Here we assume that
the involved weight and phase functions satisfies
%
%
\begin{align}
\omega _0(x,y,\xi +\fy '_x(X),\fy '_y(X))
&\le
C\omega (x,\zeta ,\xi ,-\fy '_\zeta (X)),
\label{Eq:SchattWeightCond0A}
\\[1ex]
\frac {\omega _2(x,\xi )}{\omega _1(y,\eta )}
&\le
C\omega _0(x,y,\xi ,-\eta )
\label{Eq:SchattWeightCond1A}
\\[1ex]
\omega _0(x,y,\xi ,\eta _1+\eta _2)
&\le
C\omega _0(x,y,\xi ,\eta _1)v_1(\eta _2)
\label{Eq:SchattWeightCond2A}
\\[1ex]
\omega (x,\zeta ,\xi _1+\xi _2,z_1+z_2)
&\le
\omega (x,\zeta ,\xi _1,z_1)v_2(\xi _2,z_2),
\label{Eq:SchattWeightCond3A}
\\[1ex]
v(X,\xi ,\eta ,z)
&=
v_1(\eta )v_2(\xi ,z),
\label{Eq:SchattWeightCond4A}
\\[1ex]
x,y,z,z_j,\xi ,\xi _j,\eta ,\eta _j,\zeta &\in \rr d,
\quad X= (x,y,\zeta ),
\quad  j=1,2.
\notag
\end{align}

\par

\begin{prop}
\label{Prop:OrliczKernelsFIO}
Suppose $\Phi$ is a Young function which satisfies a local $\Delta _2$ condition,
$s>1$, $\dbar >0$, $\omega ,\omega _0\in \mascP _s(\rr {4d})$ and
$v\in \mascP _s(\rr {6d})$
satisfy \eqref{Eq:SchattWeightCond0A},
\eqref{Eq:SchattWeightCond2A}--\eqref{Eq:SchattWeightCond4A},
and let $\fy \in C^2(\rr {3d})$ be such that $\fy '' \in M^{\infty ,1}_{(v)}(\rr {3d})$
and
\begin{equation}
\label{Eq:DetphiCond3}
|\det (\fy ''_{y,\zeta })|\ge \dbar .
\end{equation}
Then the map in \eqref{Eq:FIntOp2Kernel}
from $\Sigma _1(\rr{2d})$ to $\Sigma _1'(\rr {2d})$ extends uniquely to
a continuous map from $M^\Phi _{(\omega )}(\rr {2d})$ to
$M^\Phi _{(\omega _0)}(\rr {2d})$, and
\begin{equation}
\label{Eq:KerSymbOrlEst}
\nm {K_{a,\fy}}{M^\Phi _{(\omega _0)}}
\le
C\dbar ^{-1}\exp (\nm \fy{M^{\infty ,1}_{(v)}})\nm a{M^\Phi _{(\omega )}},
\qquad
a\in M^\Phi _{(\omega )}(\rr {2d}),
\end{equation}
for some constant $C$ which is independent of $a$, $\fy$ and $\Phi$.
\end{prop}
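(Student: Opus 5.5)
We reduce the estimate to a pointwise bound on the short-time Fourier transform of $K_{a,\fy}$, and then perform a change of variables whose Jacobian is controlled by \eqref{Eq:DetphiCond3}. This is the strategy of Lemma \ref{Lem:LemmaThmCont2} and the proof of Theorem \ref{Thm:Cont2}.

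By the local $\Delta _2$ condition and Proposition \ref{Prop:BasicPropOrlModSp2}, $\Sigma _1(\rr {2d})$ is dense in $M^\Phi _{(\omega )}(\rr {2d})$. It therefore suffices to prove \eqref{Eq:KerSymbOrlEst} for $a\in \Sigma _1(\rr {2d})$; uniqueness and the continuous extension then follow.

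We fix windows as in \eqref{Eq:CouplWindFunc}, that is, $\phi \in \maclD _{0,s}(\rr {2d})$ for $a$ and $\psi =\psi _2\otimes \psi _1$ for the kernel. Following Remark \ref{Rem:LemmaThmCont2} and the local Taylor expansion of $\fy$ from \cite[Lemmas 2.2, 2.3]{ToCoGa}, we expect a representation
\begin{equation*}
|V_\psi K_{a,\fy}(x,y,\xi ,\eta )|
\lesssim
\int _{\rr d}\bigl (G*|V_\phi a(x,\zeta ,\cdo )|\bigr )
\bigl (\xi -\fy '_x(X),-\fy '_\zeta (X)\bigr )\, d\zeta ,
\end{equation*}
valid on the set where $\eta$ is coupled to $\fy '_y(X)$ up to a convolution in the $\eta$ variable, with $X=(x,y,\zeta )$. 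The remainder in $\eta$ is absorbed by an $L^1$ factor $G_1(\eta +\fy '_y(X))$. Here $G,G_1\ge 0$ have $L^1_{(v_2)}$ and $L^1_{(v_1)}$ norms bounded by $C\exp (C\nm {\fy ''}{M^{\infty ,1}_{(v)}})$, and the weight splitting \eqref{Eq:SchattWeightCond4A} is what makes this factorization possible. Multiplying by $\omega _0$ and using \eqref{Eq:SchattWeightCond0A}, \eqref{Eq:SchattWeightCond2A} and \eqref{Eq:SchattWeightCond3A}, we move the weight inside the convolutions. This gives
\begin{equation*}
|V_\psi K_{a,\fy}\,\omega _0|(x,y,\xi ,\eta )
\lesssim
\int _{\rr d}G_1\bigl (\eta +\fy '_y(X)\bigr )\,
U_1\bigl (x,\zeta ,\xi -\fy '_x(X),-\fy '_\zeta (X)\bigr )\, d\zeta ,
\end{equation*}
where $U_1=G*U$ and $U=|V_\phi a\cdot \omega |$.

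The key step is to take the $L^\Phi$ norm in $(x,y,\xi ,\eta )$. For fixed $x$ we view $(y,\zeta )\mapsto (\fy '_y(X),\fy '_\zeta (X))$ locally as a change of variables. Condition \eqref{Eq:DetphiCond3} says that for fixed $(x,\zeta )$ the map $y\mapsto w=\fy '_\zeta (X)$ has Jacobian of modulus at least $\dbar$. We therefore trade $y$ for $z=-\fy '_\zeta (X)$ at the cost of a factor $\dbar ^{-1}$, and treat the integral in $\zeta$ together with $G_1$ as a convolution-type operator in $\eta$ with an $L^1$ kernel. Minkowski's inequality and the $L^1$-boundedness of convolutions on $L^\Phi$ then bound the $L^\Phi$ norm by $\dbar ^{-1}\nm {G_1}{L^1}\nm {G}{L^1}\nm {U(x,\zeta ,\xi ,z)}{L^\Phi (dx\,d\zeta \,d\xi \,dz)}$, which is $\dbar ^{-1}\nm {G_1}{L^1}\nm {G}{L^1}\nm a{M^\Phi _{(\omega )}}$.

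The shift $\xi \mapsto \xi -\fy '_x(X)$ is harmless because $L^\Phi$ norms are translation invariant in $\xi$ for each fixed $(x,y,\zeta )$. For the same reason, shifting $\eta$ by $\fy '_y(X)$ only produces the convolution with $G_1$.

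The main obstacle is making the change of variables $(y,\zeta )\to (z,\eta )$ rigorous inside an Orlicz norm rather than an $L^1$ norm, since modular and norm do not commute with integration in $\zeta$. I would work at the modular level, estimating $\iint \Phi (|V_\psi K\,\omega _0|/\lambda )$. I would apply Jensen's inequality to the averaged $\zeta$-integral against the probability density $G_1/\nm {G_1}{L^1}$ and the $\eta$-coupling, then substitute $z=-\fy '_\zeta (X)$ in $y$ using \eqref{Eq:DetphiCond3}. Choosing $\lambda \asymp \dbar ^{-1}\nm {G}{L^1}\nm {G_1}{L^1}\nm a{M^\Phi _{(\omega )}}$ should make the modular at most one, which is the estimate \eqref{Eq:KerSymbOrlEst}.
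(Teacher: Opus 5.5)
Your outline has the same architecture as the paper's proof. It uses the pointwise bound for $V_\psi K_{a,\fy}\cdot \omega _0$ from Lemma \ref{Lem:LemmaThmCont2} and Remark \ref{Rem:LemmaThmCont2}, then Jensen's inequality on the modular, then the substitution $y\mapsto -\fy '_\zeta (X)$ justified by \eqref{Eq:DetphiCond3}. The one real difference is how you close the modular estimate. The paper substitutes $\zeta \mapsto \fy '_y(X)$ to normalize the Jensen measure, substitutes back using the upper bound in \eqref{Eq:SpecCondFy}, and then invokes the $\Delta _2$ condition to pull $C_0/\dbar$ out of $\Phi$. That last step makes the constant depend on $\Phi$ (through its $\Delta _2$ constant) and on $\dbar$ worse than linearly. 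You instead scale $\lambda$. Done correctly, this gives exactly one factor $\dbar ^{-1}$ with a constant independent of $\Phi$, and it uses $\Delta _2$ only for density.

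Three points in your sketch need fixing for this to go through.

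First, $G_1(\eta +\fy '_y(X))\, d\zeta /\nm {G_1}{L^1}$ is not a probability measure. Its mass is only bounded by $\dbar ^{-1}$, and that bound comes from the substitution $\zeta \mapsto \fy '_y(X)$ at fixed $(x,y)$, which is a second, separate use of \eqref{Eq:DetphiCond3}. The hypothesis controls the two partial maps $\zeta \mapsto \fy '_y$ and $y\mapsto \fy '_\zeta$; these are global diffeomorphisms by Hadamard's theorem, since $\fy ''$ is bounded. It does not control the joint map $(y,\zeta )\mapsto (\fy '_y,\fy '_\zeta )$ that you mention. To justify your $\lambda$, put $M=\dbar ^{-1}\nm G{L^1}\nm {\widehat \psi _2}{L^1}$ and use that $t\mapsto \Phi (tu)/t$ is nondecreasing. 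Then $\Phi (\int f\, d\mu )\le M^{-1}\int \Phi (Mf)\, d\mu$ whenever $\mu$ has mass at most $M$. After integrating in $\eta$, $\xi$ and $\Xi _1$ and substituting $y\mapsto -\fy '_\zeta (X)-z_1$, the prefactor $M^{-1}\dbar ^{-1}\nm G{L^1}\nm {\widehat \psi _2}{L^1}$ equals $1$. Hence $\lambda =CM\nm a{M^\Phi _{(\omega )}}$ makes the modular at most one.

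Second, the bound does not factor as $G_1(\eta \pm \fy '_y)\cdot (G*U)(\dots )$. Here $G$ is a joint function of $\Xi _1=(\xi _1,\eta _1,z_1)$, and \eqref{Eq:SchattWeightCond4A} splits only the weight, not $G$. Work with the joint measure $|\widehat \psi _2(\eta -\fy '_y-\eta _1)|G(\Xi _1)\, d\Xi _1d\zeta$ from \eqref{Eq:STFTKernEst}; nothing else in the argument changes.

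Third, if $\Phi$ vanishes near the origin, then $M^\Phi _{(\omega )}=M^\infty _{(\omega )}$, in which $\Sigma _1$ is not norm dense. Your reduction to $a\in \Sigma _1$ therefore fails in that case. The paper handles it separately with a direct $L^\infty$ estimate.
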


\par

For the proof we observe that $M^{\infty ,1}_{(v)}(\rr {3d})
\subseteq C(\rr {3d})\cap L^\infty (\rr {3d})$. Hence the conditions on
$\fy$ in Proposition \ref{Prop:OrliczKernelsFIO} imply that
\begin{equation}
\label{Eq:SpecCondFy}
\dbar \le |\det (\fy ''_{y,\zeta }(x,y,\zeta ))|\le C,
\end{equation}
for some constant $C>0$ which is independent of $x,y,\zeta \in \rr d$.

\par

\begin{proof}[Proof of Proposition \ref{Prop:OrliczKernelsFIO}]
Let $\phi$ and $\phi _j$ be the same as in \eqref{Eq:CouplWindFunc},
where we additionally assume that $\phi$ is given by
$$
\phi (x,y,\zeta ) = \psi _1(x,\zeta )\psi _2(y),
\qquad
x,y,\zeta \in \rr d.
$$
Also let $\psi$ be as in Remark \ref{Rem:LemmaThmCont2}, and let
$$
H=|V_{\psi _1}a \cdot \omega |.
$$
Then $\nm a{M^\Phi _{(\omega )}}=\nm H{L^\Phi}$.

\par

If $\Xi _1=(\xi _1,\eta _1,z_1)\in \rr {3d}$, then
Lemma \ref{Lem:LemmaThmCont2} and
Remark \ref{Rem:LemmaThmCont2} give
\begin{equation}
\label{Eq:STFTKernEst}
\begin{aligned}
&|V_\psi K_{a,\fy}(x,y,\xi ,\eta )\cdot \omega _0(x,y,\xi ,\eta )|
\asymp
\left |
\int _{\rr d} \maclK _{a,\fy}(X,\xi ,\eta )\, d\zeta 
\right |
\\[1ex]
&\le
\iint \limits _{\rr {4d}}H(x,\zeta ,\xi -\fy '_x-\xi _1,-\fy '_\zeta -z_1)
|\widehat \psi _2(\eta -\fy '_y-\eta _1)| G(\Xi _1)\, d\Xi _1d\zeta .
\end{aligned}
\end{equation}
Here observe that
$$
(\fy '_x,\fy '_y,\fy '_\zeta )= (\fy '_x(X),\fy '_y(X),\fy '_\zeta (X) )
$$
depends on $X=(x,y,\zeta )\in \rr {3d}$. In order to deduce the
result we shall consider two cases. In the first case we assume
that $\Phi$ is positive, and in the second case we assume that
$\Phi$ fails to be positive.

\par

Therefore, suppose that $\Phi$ additionally is positive. Then by
replacing $\Phi$ with another Young function which agree with
$\Phi$ near origin, we may assume that $\Phi$ satisfies a global
$\Delta _2$-condition.

\par

By taking $(\Xi _1,\fy '_y)$ as new variables of integrations, 
and using \eqref{Eq:SpecCondFy} and
\eqref{Eq:STFTKernEst}, we obtain
\begin{equation*}
\begin{aligned}
&|V_\psi K_{a,\fy}(x,y,\xi ,\eta )\cdot \omega _0(x,y,\xi ,\eta )|
\\[1ex]
&\le
\frac 1{\dbar}
\iint \limits _{\rr {4d}}H(x,\zeta ,\xi -\fy '_x-\xi _1,-\fy '_\zeta -z_1)
|\widehat \psi _2(\eta -\zeta _1-\eta _1)| G(\Xi _1)\, d\Xi _1d\zeta _1.
\end{aligned}
\end{equation*}
Let $C_0=\nm G{L^1}\nm {\widehat \psi _2}{L^1}$.
Then Jensen's inequality gives
\begin{equation*}
\begin{aligned}
&\Phi ( |V_\psi K_{a,\fy}(x,y,\xi ,\eta )\cdot \omega _0(x,y,\xi ,\eta )| )
\\[1ex]
&\le
\Phi \left (
\frac 1{\dbar}
\iint \limits _{\rr {4d}}H(x,\zeta ,\xi -\fy '_x-\xi _1,-\fy '_\zeta -z_1)
|\widehat \psi _2(\eta -\zeta _1-\eta _1)| G(\Xi _1)\, d\Xi _1d\zeta _1
\right )
\\[1ex]
&\le
\frac 1{C_0}
\iint \limits _{\rr {4d}}
\Phi \left (
\frac {C_0}{\dbar}
H(x,\zeta ,\xi -\fy '_x-\xi _1,-\fy '_\zeta -z_1) \right )
|\widehat \psi _2(\eta -\zeta _1-\eta _1)| G(\Xi _1)\, d\Xi _1d\zeta _1
\\[1ex]
&\le
\frac C{C_0}
\iint \limits _{\rr {4d}}
\Phi \left (
\frac {C_0}{\dbar}
H(x,\zeta ,\xi -\fy '_x-\xi _1,-\fy '_\zeta -z_1) \right )
|\widehat \psi _2(\eta -\fy '_y -\eta _1)| G(\Xi _1)\, d\Xi _1d\zeta .
\end{aligned}
\end{equation*}
In the last step we have taken back the original variables of integration,
and used the second inequality in \eqref{Eq:SpecCondFy}. Since
$\Phi$ satisfies a $\Delta _2$ condition we get
\begin{equation*}
\begin{aligned}
&\Phi ( |V_\psi K_{a,\fy}(x,y,\xi ,\eta )\cdot \omega _0(x,y,\xi ,\eta )| )
\\[1ex]
&\le
C
\iint \limits _{\rr {4d}}
\Phi \left (
H(x,\zeta ,\xi -\fy '_x-\xi _1,-\fy '_\zeta -z_1) \right )
|\widehat \psi _2(\eta -\fy '_y -\eta _1)| G(\Xi _1)\, d\Xi _1d\zeta ,
\end{aligned}
\end{equation*}
for some constant $C>0$.

\par

By integration we get
\begin{equation*}
\begin{aligned}
&
\iiiint \limits _{\rr {4d}}
\Phi ( |V_\psi K_{a,\fy}(x,y,\xi ,\eta )\cdot \omega _0(x,y,\xi ,\eta )| )\, dxdyd\xi d\eta 
\\[1ex]
&\le
C
\iiiint \limits _{\rr {8d}}
\Phi \left (
H(x,\zeta ,\xi -\fy '_x-\xi _1,-\fy '_\zeta -z_1) \right )
|\widehat \psi _2(\eta -\fy '_y -\eta _1)| G(\Xi _1)\, dX d\xi d\eta  d\Xi _1.
\end{aligned}
\end{equation*}
Here recall that $X=(x,y,\zeta )$, giving that the integration variables in the
last integral are given by
$$
(x,y,\zeta ,\xi ,\eta ,\xi _1,\eta _1,z_1)\in \rr {8d}.
$$

\par

By taking
$$
(x,\fy '_\zeta ,\zeta ,\xi ,\eta ,\xi _1,\eta _1,z_1)
$$
as new variables of integrations, and using the first inequality in
\eqref{Eq:SpecCondFy}, we obtain
\begin{equation*}
\begin{aligned}
&
\iiiint \limits _{\rr {4d}}
\Phi ( |V_\psi K_{a,\fy}(x,y,\xi ,\eta )\cdot \omega _0(x,y,\xi ,\eta )| )\, dxdyd\xi d\eta 
\\[1ex]
&\le
\frac C{\dbar}
\iiiint \limits _{\rr {8d}}
\Phi \left (
H(x,\zeta ,\xi -\fy '_x-\xi _1,-y -z_1) \right )
|\widehat \psi _2(\eta -\fy '_y -\eta _1)| G(\Xi _1)\, dX d\xi d\eta  d\Xi _1
\\[1ex]
&=
\frac C{\dbar}
\iiiint \limits _{\rr {8d}}
\Phi \left (
H(x,\zeta ,\xi ,y) \right )
|\widehat \psi _2(\eta )| G(\Xi _1)\, dX d\xi d\eta  d\Xi _1
\\[1ex]
&=
\frac C{\dbar}
\nm G{L^1}\nm {\psi _2}{L^1}\nm {\Phi (H)}{L^1}.
\end{aligned}
\end{equation*}
Here in the first equality we have performed a straight-forward
substitution of integration variables.

\par

The estimate \eqref{Eq:KerSymbOrlEst} now follows from the
last estimates, homogeneity and the fact that $\Phi$ satisfies
a $\Delta _2$ condition. This gives the result when $\Phi$ is positive.

\par

Next suppose that $\Phi$ is not positive. Then $\Phi$ equals to zero near origin,
which implies that $M^\Phi _{(\omega )}=M^\infty _{(\omega )}$ and
$M^\Phi _{(\omega _0)}=M^\infty _{(\omega _0)}$. By
\eqref{Eq:KerSymbOrlEst} we obtain
\begin{equation*}
\begin{aligned}
\nm {K_{a,\fy}}{M^\infty _{(\omega _0)}}
&=
\sup _{x,y,\xi ,\eta}\big (|V_\psi K_{a,\fy}(x,y,\xi ,\eta )\cdot \omega _0(x,y,\xi ,\eta )| \big )
\\[1ex]
&\lesssim
\nm H{L^\infty}
\sup _{x,y,\xi ,\eta} \left (
\iint \limits _{\rr {4d}}
|\widehat \psi _2(\eta -\fy '_y-\eta _1)| G(\Xi _1)\, d\Xi _1d\zeta \right )
\\[1ex]
&\le
\nm H{L^\infty}
\sup _{x,y,\xi ,\eta} \left (
\frac 1{\dbar}
\iint \limits _{\rr {4d}}
|\widehat \psi _2(\eta -\zeta _1-\eta _1)| G(\Xi _1)\, d\Xi _1d\zeta _1 \right )
\\[1ex]
&\le
\nm H{L^\infty}
\asymp
\nm a{M^\infty _{(\omega )}}.
\end{aligned}
\end{equation*}
In the second inequality we have again taking $(\Xi _1,\fy '_y)$ as new
variables of integration. The result now follows in this case from the latter
estimates.
\end{proof}

\par

We shall combine the previous proposition with the following one.

\par

\begin{prop}
\label{Prop:KernelOrlSchatt}
Let $\Phi$ be a quasi-Young function which satisfies
$$
q_\Phi \le p_\Phi < 2
\quad \text{or}\quad
q_\Phi =p_\Phi =2.
$$
Also let
$\omega _0\in \mascP _E(\rr {2d_2+2d_1})$ and $\omega _j\in \mascP _E(\rr {2d_j})$,
$j=1,2$, be such that \eqref{Eq:SchattWeightCond1A} holds. If
$K\in M^\Phi _{(\omega )}(\rr {d_2+d_1})$, then
$T_K\in \mascI _\Phi (\omega _1,\omega _2)$, and
$$
\nm {T_K}{\mascI _\Phi (\omega _1,\omega _2)}
\le
C\nm K{M^\Phi _{(\omega _0)}},
\qquad
K\in M^\Phi _{(\omega )}(\rr {d_2+d_1}),
$$
for some constant $C>0$ which is independent of $K$ and $\Phi$.
\end{prop}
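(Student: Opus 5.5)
The plan is to reduce the Orlicz Schatten estimate to a discrete statement about Gabor coefficients, and then interpolate between the endpoint cases $\mascI_1$ (or rather $\mascI_p$ with $p<2$) and $\mascI_2$.

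\emph{Reduction to the unweighted case.} Let $\Phi_{[p]}$ be as in Remark \ref{Rem:PhiLeb}. Choose a Gaussian or Gelfand--Shilov window $\phi_j$ and the associated Gabor frames on $\rr{d_j}$. By Proposition \ref{Prop:NormInvModSp}, $M^2_{(\omega_j)}(\rr{d_j})$ is isomorphic to a weighted $\ell^2$ space via the Gabor analysis and synthesis operators, with norms comparable. An operator $T_K$ from $M^2_{(\omega_1)}$ to $M^2_{(\omega_2)}$ is then equivalent, up to bounded isomorphisms, to the matrix
$$
A=\big(K_{k,l}\big),\qquad K_{k,l}=(T_K\phi_{1,l},\phi_{2,k})\,\omega_2(\Lambda_k)/\omega_1(\Lambda_l),
$$
acting on unweighted $\ell^2$. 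The quantity $(T_K\phi_{1,l},\phi_{2,k})$ equals, up to a unimodular factor, $V_{\phi_2\otimes\overline{\phi_1}}K$ evaluated at the lattice point $(x_k,y_l,\xi_k,-\eta_l)$. Then \eqref{Eq:SchattWeightCond1A} gives $|K_{k,l}|\lesssim |V_\psi K\cdot\omega_0|$ at lattice points. Since $\mascI_\Phi$ is stable under composition with bounded operators, as Proposition \ref{Prop:OrlSchattTop} shows, it suffices to show
$$
\nm{A}{\mascI_\Phi(\ell^2)}\lesssim \nm{\{K_{k,l}\}}{\ell^\Phi}.
$$
Sampling then gives $\nm{\{K_{k,l}\}}{\ell^\Phi}\lesssim \nm K{M^\Phi_{(\omega_0)}}$. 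This is the standard discretization of Orlicz modulation spaces, available from \cite{ToUsNaOz,ToPfTe}.

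\emph{Key discrete estimate.} For $p\in[1,2]$, a matrix with entries in $\ell^p$ belongs to $\mascI_p$, with $\nm A{\mascI_p}\le\nm A{\ell^p}$.
\begin{itemize}
\item For $p=2$ this is an identity, since the Hilbert--Schmidt norm is the $\ell^2$ norm of the entries.
\item For $p=1$ it holds because $A=\sum a_{k,l}e_k\otimes e_l$ is a sum of rank-one operators of norm $|a_{k,l}|$.
\item For general $p$ one interpolates, or uses Proposition \ref{Prop:OrlSchattTop} together with $\sum_j|(Af_j,g_j)|^p\le\sum|a_{k,l}|^p$ for orthonormal systems when $p\le2$ (a Schur/Hölder argument).
\end{itemize}

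\emph{Passage to Orlicz norms (main obstacle).} We need an Orlicz version of the preceding estimate. My approach uses the characterization \eqref{Eq:AltOrlSchattNorm}. Fix orthonormal sequences $f_j,g_j$ and set $c_j=(Af_j,g_j)=\sum_{k,l}a_{k,l}\overline{g_j(k)}f_j(l)$. I want $\sum_j\Phi(|c_j|/\lambda)\le\sum_{k,l}\Phi(|a_{k,l}|/\lambda)$. The hypothesis $q_\Phi\le p_\Phi<2$ means $\Phi(t)/t^{p}$ is monotone for suitable exponents near the origin. I would write $\Phi$ locally as a superposition $\Phi(t)\asymp\int t^{p}\,d\mu(p)$, or more realistically apply interpolation for Orlicz sequence spaces between $\ell^{q}$ and $\ell^{p}$ with $q\le q_\Phi$ and $p_\Phi\le p<2$ (Boyd-index type interpolation, using both the $\mascI_p$ and the $\mascI_q$ bounds). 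Orlicz Schatten classes are obtained from Orlicz sequence spaces by the same interpolation functor, as in \cite{BaFuTo1,SchFuh}.

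This interpolation step is the delicate point. It requires that $\Phi$ can be replaced near the origin by a Young function with global $\Delta_2$ and the stated indices, which is justified by Remark \ref{Rem:Delta2Cond} and Proposition \ref{Prop:SchattenEmbed}. The case $q_\Phi=p_\Phi=2$ reduces to the Hilbert--Schmidt identity after the same comparison near the origin. Throughout, the constants depend only on the index bounds, which gives independence of $\Phi$ in the stated sense.
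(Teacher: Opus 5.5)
Your proposal is correct and follows essentially the same route as the paper: discretize with Gabor frames, obtain $\ell^p$ bounds ($p\le 2$), uniform over orthonormal systems, for the linear map sending coefficients to the diagonal matrix elements $\{(Af_j,g_j)\}_j$, interpolate this scalar map into $\ell^\Phi$ (the paper invokes Liu's Marcinkiewicz theorem for Orlicz spaces), and take the supremum via \eqref{Eq:AltOrlSchattNorm}. The differences are minor: you discretize $T_K$ by conjugating with frame operators on each factor and prove the $\ell^1$ and $\ell^2$ endpoint bounds directly (Bessel's inequality and Cauchy--Schwarz), whereas the paper synthesizes $K$ from a frame on $\rr{d_2+d_1}$ and quotes the estimate $M^p_{(\omega_0)}\to\mascI_p$ from \cite{Toft32}.
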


\par

\begin{proof}
Let $d=d_2+d_1$, $\phi ,\psi \in \Sigma _1(\rr {d})$ and $\ep >0$ be chosen such that
\begin{alignat*}{2}
&\{ \phi _{j,\iota} \} _{j,\iota \in \ep \zz d} &
\qquad \text{and}\qquad
&\{ \psi _{j,\iota} \} _{j,\iota \in \ep \zz d} ,
\\[1ex]
&\phi _{j,\iota} = e^{i\scal \cdo \iota}\phi (\cdo -j), &
\qquad
&\psi _{j,\iota} = e^{i\scal \cdo \iota}\psi (\cdo -j) ,
\end{alignat*}
are dual Gabor frames. The existence of such frames follows from e.{\,}g.
\cite{GroLyu}.

\par

For any sequence $c=\{ c(j,\iota )\} _{j,\iota \in \ep \zz {2d}}$, let $T_c$ be
the kernel operator
\begin{equation}
\label{Eq:KernelSynthesis}
T_c =T_K = \sum _{j,\iota} c(j,\iota )T_{\phi _{j,\iota}}
\quad \text{when}\quad
K=K_c=\sum _{j,\iota} c(j,\iota )\phi _{j,\iota}
\end{equation}
(i.{\,}e. $K_c$ is the synthesis of $c$).
Also let $\ON (\omega _j)$ denote  the set of all orthonormal sequences in
$M^2_{(\omega _j)}(\rr {d_j})$, $\{ f_{j,k}\} _{k=1}^\infty \in \ON (\omega _j)$,
$j=1,2$, and let $S$ be the operator from $\ell _0(\ep \zz {2d})$ to
$\ell _0'(\mathbf Z_+)$, given by
$$
S(\{ c(j,\iota )\} _{j,\iota \in \ep \zz {2d}})
=
\{ (T_cf_{1,k},f_{2,k})_{M^2_{(\omega _2)}} \} _{k=1}^\infty .
$$
We have
\begin{align}
\nm {\{ c(j,\iota )\} _{j,\iota \in \ep \zz {2d}}}{\ell ^\Phi _{(\omega )}}
&\asymp
\nm {K_c}{M^\Phi _{(\omega )}}
\label{Eq:GabModNormEquiv}
\intertext{when $c\in \ell _0(\ep \zz {2d})$, and}
\nm {\{ (T_cf_{1,k},f_{2,k})_{M^2_{(\omega _2)}} \} _{k=1}^\infty}{\ell ^\Psi}
&\le
\nm {T_c}{\mascI _\Psi (\omega _1,\omega _2)}.
\label{Eq:SchattNormEst}
\end{align}

\par

We have
$$
\nm {T_K}{\mascI _p(\omega _1,\omega _2)} \le C\nm K{M^p_{(\omega )}},
$$
for some constant $C>0$ which is independent of $K\in M^p_{(\omega )}(\rr d)$ and
$0<p\le 2$ (see e.{\,}g. \cite{Toft32}). Hence it follows by combining these relations that
\begin{equation}
\label{Eq:SOpCont}
\nm {S(c)}{\ell ^p} \le C\nm c{\ell ^p_{(\omega )}},
\end{equation}
for some constant $C>0$ which is independent of $c\in \ell _0(\ep \zz {2d})$,
$\{ f_{j,k}\} _{k=1}^\infty \in \ON (\omega _j)$, $j=1,2$, and $0<p\le 2$. Since
$\ell _0$ is dense in $\ell ^p$ when $0<p\le 2$, it follows that $S$ extends
uniquely to a continuous map from $\ell ^p_{(\omega )}(\ep \zz {2d})$ to
$\ell ^p(\mathbf Z_+)$, and that \eqref{Eq:SOpCont} holds for any
$c\in \ell ^p_{(\omega )}(\ep \zz {2d})$.

\par

By Marcinkiewicz interpolation theorem for Orlicz spaces, given in
\cite{Liu}, it follows that $S$ restricts
to a continuous map from $\ell ^\Phi _{(\omega )}(\ep \zz {2d})$ to
$\ell ^\Phi (\mathbf Z_+)$, and that 
\begin{equation}
\tag*{(\ref{Eq:SOpCont})$'$}
\label{Eq:SOpContB}
\nm {S(c)}{\ell ^\Phi} \le C\nm c{\ell ^\Phi_{(\omega )}},
\qquad
c\in \ell ^\Phi _{(\omega )}(\ep \zz {2d}).
\end{equation}

\par

A combination of \eqref{Eq:GabModNormEquiv},
\eqref{Eq:SchattNormEst} and \ref{Eq:SOpContB} gives
$$
\nm {\{ (T_cf_{1,k},f_{2,k})_{M^2_{(\omega _2)}} \} _{k=1}^\infty}{\ell ^\Psi}
\le
C\nm {K_c}{M^p_{(\omega )}},
\qquad
c\in \ell ^\Phi _{(\omega )}(\ep \zz {2d}).
$$
Since any $K\in M^\Phi _{(\omega )}(\rr {d})$ is given by $K_c$, for some
$c\in \ell ^\Phi _{(\omega )}(\ep \zz {2d})$, the last estimate gives
$$
\nm {\{ (T_Kf_{1,k},f_{2,k})_{M^2_{(\omega _2)}} \} _{k=1}^\infty}{\ell ^\Psi}
\le
C\nm {K_c}{M^p_{(\omega )}},
\qquad
K\in M^\Phi _{(\omega )}(\rr {d}),
$$
where we recall that the constant $C$ is independent of the choices of
$\{ f_{j,k}\} _{k=1}^\infty \in \ON (\omega _j)$, $j=1,2$. By taking the supremum
over all such orthonormal sequences we obtain
$$
\nm {T_K}{\mascI _\Phi (\omega _1,\omega _2)}
\le
C\nm {K_c}{M^p_{(\omega )}},
\qquad
K\in M^\Phi _{(\omega )}(\rr {d}),
$$
which gives the result.
\end{proof}

\par

A straight-forward combination of Propositions \ref{Prop:OrliczKernelsFIO}
and \ref{Prop:KernelOrlSchatt} gives the following. The details are left for the reader.

\par

\begin{thm}
\label{Thm:OrliczSchattenFIO}
Suppose $\Phi$ is a Young function which satisfies a local $\Delta _2$ condition,
either $1< q_\Phi \le p_\Phi <2$ or $1\le q_\Phi = p_\Phi \le 2$,
$s>1$, $\dbar >0$, $\omega ,\omega _0\in \mascP _s(\rr {4d})$ and
$v\in \mascP _s(\rr {6d})$
satisfy \eqref{Eq:SchattWeightCond0A},
\eqref{Eq:SchattWeightCond1A}--\eqref{Eq:SchattWeightCond4A},
$\fy \in C^2(\rr {3d})$ satisfies $\fy '' \in M^{\infty ,1}_{(v)}(\rr {3d})$ and
\eqref{Eq:DetphiCond3}, and let
$a\in M^\Phi _{(\omega )}(\rr {2d})$. Then
$\op _\fy (a)\in \mascI _\Phi (\omega _1,\omega _2)$,
and
\begin{equation}
\label{Eq:FIOOrlSchattEst}
\begin{aligned}
\nm {\op _\fy (a)}{\mascI _\Phi (\omega _1,\omega _2)}
&\le
C\dbar ^{-1}\exp (\nm \fy{M^{\infty ,1}_{(v)}})\nm a{M^\Phi _{(\omega )}},
\\
a&\in M^\Phi _{(\omega )}(\rr {2d}),
\end{aligned}
\end{equation}
for some constant $C$ which is independent of $a$, $\fy$ and $\Phi$.
\end{thm}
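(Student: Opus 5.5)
The plan is to factor $a\mapsto \op _\fy (a)$ as a composition of two bounded maps. The first takes the amplitude to its kernel, $a\mapsto K_{a,\fy}$ from \eqref{Eq:FIntOp2Kernel}, and the second takes a kernel to its operator, $K\mapsto T_K$. By construction $\op _\fy (a)=T_{K_{a,\fy}}$, first for $a\in \Sigma _1(\rr {2d})$, where everything is given by absolutely convergent integrals.

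The argument then proceeds in three steps.

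\begin{enumerate}
\item The hypotheses on $\Phi$ ($\Phi$ is a Young function with a local $\Delta _2$ condition), on $\omega ,\omega _0,v$ (namely \eqref{Eq:SchattWeightCond0A} and \eqref{Eq:SchattWeightCond2A}--\eqref{Eq:SchattWeightCond4A}) and on $\fy$ (namely $\fy ''\in M^{\infty ,1}_{(v)}$ and \eqref{Eq:DetphiCond3}) are exactly those of Proposition \ref{Prop:OrliczKernelsFIO}. That proposition therefore gives
$$
\nm {K_{a,\fy}}{M^\Phi _{(\omega _0)}}\le C\dbar ^{-1}\exp (\nm \fy{M^{\infty ,1}_{(v)}})\nm a{M^\Phi _{(\omega )}} .
$$
It also shows that $a\mapsto K_{a,\fy}$ extends uniquely and continuously to $M^\Phi _{(\omega )}(\rr {2d})$.

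\item The remaining weight hypothesis \eqref{Eq:SchattWeightCond1A} links $\omega _0$ with $\omega _1,\omega _2$. The Young function $\Phi$ is in particular a quasi-Young function. The conditions $1<q_\Phi \le p_\Phi <2$ or $1\le q_\Phi =p_\Phi \le 2$ fall into the range $q_\Phi\le p_\Phi<2$ or $q_\Phi=p_\Phi=2$ of Proposition \ref{Prop:KernelOrlSchatt}. Applying that proposition with $K=K_{a,\fy}$ and weight $\omega _0$ yields $T_{K_{a,\fy}}\in \mascI _\Phi (\omega _1,\omega _2)$ with
$$
\nm {T_{K_{a,\fy}}}{\mascI _\Phi (\omega _1,\omega _2)}\le C\nm {K_{a,\fy}}{M^\Phi _{(\omega _0)}} .
$$
The constant is independent of $K$ and $\Phi$. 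Chaining this with the estimate from step 1 gives \eqref{Eq:FIOOrlSchattEst}. Since both constants are independent of $a$, $\fy$ and $\Phi$, so is the product.

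\item The estimate must be justified for general $a\in M^\Phi _{(\omega )}(\rr {2d})$, which requires identifying $\op _\fy (a)$ with $T_{K_{a,\fy}}$ there. Because $\Phi$ satisfies a local $\Delta _2$ condition, Proposition \ref{Prop:BasicPropOrlModSp2}(2) applies when $\Phi$ is positive near the origin, and $\Sigma _1$ is then dense in $M^\Phi _{(\omega )}$. In that case I define $\op _\fy (a)$ as $T_{K_{a,\fy}}$, using the continuous extension from Proposition \ref{Prop:OrliczKernelsFIO}. This definition agrees with the original one on $\Sigma _1$, and it is the unique continuous extension there, because $\mascI _\Phi$ is a Banach space by Proposition \ref{Prop:OrlSchattTop}. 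If instead $\Phi$ vanishes near the origin, then $M^\Phi =M^\infty$ and $q_\Phi =p_\Phi =\infty$, which is excluded by the hypotheses. So only the positive case needs treatment.
\end{enumerate}

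The main obstacle I expect is bookkeeping rather than analysis. One point is the mismatch in Proposition \ref{Prop:KernelOrlSchatt}, where the kernel is written as lying in $M^\Phi _{(\omega )}$ but the estimate uses $M^\Phi _{(\omega _0)}$. I will read it with $\omega _0$ throughout and check that \eqref{Eq:SchattWeightCond1A} is exactly the weight relation used there. The other point is that its proof interpolates between $p\le 2$ via Marcinkiewicz, which needs the index conditions on $\Phi$ stated in the theorem.
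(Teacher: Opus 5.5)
Your proposal is correct and is the paper's route: the paper obtains the theorem as a "straight-forward combination" of Proposition \ref{Prop:OrliczKernelsFIO} ($a\mapsto K_{a,\fy}$ bounded from $M^\Phi_{(\omega)}$ to $M^\Phi_{(\omega_0)}$) and Proposition \ref{Prop:KernelOrlSchatt} (kernels in $M^\Phi_{(\omega_0)}$ give operators in $\mascI_\Phi(\omega_1,\omega_2)$ under \eqref{Eq:SchattWeightCond1A}), and leaves the details to the reader. Your step 3 supplies those omitted details; the one correction is that uniqueness of the extension rests on the density of $\Sigma_1$ in $M^\Phi_{(\omega)}$, not on completeness of $\mascI_\Phi$.
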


\par

Next we extend the previous results and investigations to include
Fourier integral operators of the form \ref{Eq:FIntOp3Again}. The following
proposition is important for this transition. Here for any weight function
$\omega _0$ on $\rr {2d}$ and real $d\times d$ matrix $A$, we let
\begin{align}
&\omega _A(x,y,\xi ,\eta )
\notag
\\[1ex]
&\equiv
\omega _0(x-A(x-y),y-A(x-y),\xi + A^*(\xi +\eta ),\eta - A^*(\xi +\eta )).
\label{Eq:WeightFuncMatrixA}
\intertext{It follows that}
&\omega _0(x,y,\xi ,\eta )
\notag
\\[1ex]
&=
\omega _A(x+A(x-y),y+A(x-y),\xi - A^*(\xi +\eta ),\eta + A^*(\xi +\eta )).
\notag
\end{align}

\par

Here also recall that for a linear bijective map $T$ on $\rr d$, the pullback $T^*f$
of $T$ on a distribution $f$ on $\rr d$ is defined by the formula
$$
\scal {T^*f}\phi = |\det (T)|^{-1}\scal f{\phi (T^{-1}\cdo )},
$$
for any test function $\phi$ in corresponding test function space.

\par

\begin{prop}
\label{Prop:STFTNormEstPullback}
Let $A$ be a real $d\times d$ matrix, $\Phi$ be a Young function, and let
$\omega _0,\omega _A\in \mascP _E(\rr {2d})$
be such that \eqref{Eq:WeightFuncMatrixA} holds. Also let
$T_A: \rr {2d}\to \rr {2d}$ be given by
$$
T_A(x,y)\equiv (x-A(x-y),y-A(x-y)).
$$
Then $T_A^*$ on $\Sigma _1(\rr {2d})$ extends uniquely to a
homoeomorphism from $M^\Phi _{(\omega _0)}(\rr {2d})$ to
$M^\Phi _{(\omega _A)}(\rr {2d})$.
\end{prop}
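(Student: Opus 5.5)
The plan is to show that $T_A$ is a volume-preserving linear bijection and that its pullback acts on short-time Fourier transforms by a measure-preserving change of variables. The weight $\omega _A$ in \eqref{Eq:WeightFuncMatrixA} is set up exactly to absorb this change of variables.

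First we record the linear algebra. With $u=x-y$ one has $T_A(x,y)=(x-Au,\,y-Au)$, so the difference of the two components is again $u$. Hence $T_A$ is bijective with inverse
$$
T_A^{-1}(x,y)=(x+A(x-y),\,y+A(x-y)),
$$
and in the coordinates $(u,y)$ the map is triangular with identity diagonal blocks, so $|\det T_A|=1$. Thus $T_A^*f=f\circ T_A$, this map is well defined on $\Sigma _1'(\rr {2d})$ by duality, and it is a homeomorphism on $\Sigma _1(\rr {2d})$ and on $\Sigma _1'(\rr {2d})$ with inverse $(T_A^{-1})^*$. In particular no density argument is needed to define the extension. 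This matters because $\Phi$ is not assumed to satisfy any $\Delta _2$-condition. Transposing the block matrix of $T_A^{-1}$ shows that
$$
(T_A^{-1})^{t}(\xi ,\eta )=(\xi +A^*(\xi +\eta ),\,\eta -A^*(\xi +\eta )),
$$
which is exactly the frequency part of \eqref{Eq:WeightFuncMatrixA}; this map also has determinant of modulus one.

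Next we compute the short-time Fourier transform of the pullback. For $\phi \in \Sigma _1(\rr {2d})\setminus 0$, a change of variables in the defining integral, using $|\det T_A|=1$, gives
$$
|V_\phi (f\circ T_A)(X,\Xi )|=|V_{\phi \circ T_A^{-1}}f(T_AX,(T_A^{-1})^t\Xi )|,
$$
where only a unimodular phase factor is lost. Multiplying by $\omega _A(X,\Xi )=\omega _0(T_AX,(T_A^{-1})^t\Xi )$ and substituting $(X,\Xi )\mapsto (T_AX,(T_A^{-1})^t\Xi )$, which preserves Lebesgue measure, we get that
$$
\int \Phi \big ( |V_\phi (f\circ T_A)\,\omega _A|/\lambda \big )\, dXd\Xi
=
\int \Phi \big ( |V_{\tilde \phi}f\,\omega _0|/\lambda \big )\, dXd\Xi ,
\qquad
\tilde \phi =\phi \circ T_A^{-1},
$$
for every $\lambda >0$. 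It follows that $\nm {V_\phi (T_A^*f)\,\omega _A}{L^\Phi}=\nm {V_{\tilde \phi}f\,\omega _0}{L^\Phi}$.

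The step needing some care is the change of window from $\phi$ to $\tilde \phi$. We invoke Proposition \ref{Prop:NormInvModSp}. The weights lie in $\mascP _E$, so by \eqref{Eq:weight0} they are moderate with respect to some $v(X)=e^{r|X|}$. The functions $\phi$ and $\tilde \phi$, being Gaussians composed with invertible linear maps, belong to $\Sigma _1\subseteq M^1_{(v)}$ for every such $v$. Proposition \ref{Prop:NormInvModSp} therefore gives $\nm {V_{\tilde \phi}f\,\omega _0}{L^\Phi}\asymp \nm f{M^\Phi _{(\omega _0)}}$, and similarly $\nm {V_\phi (T_A^*f)\,\omega _A}{L^\Phi}\asymp \nm {T_A^*f}{M^\Phi _{(\omega _A)}}$. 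Combining these yields
$$
\nm {T_A^*f}{M^\Phi _{(\omega _A)}}\asymp \nm f{M^\Phi _{(\omega _0)}},
\qquad f\in \Sigma _1'(\rr {2d}).
$$

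Hence $T_A^*$ maps $M^\Phi _{(\omega _0)}$ continuously into $M^\Phi _{(\omega _A)}$. Applying the same argument to $T_A^{-1}$, which corresponds to replacing $A$ by $-A$ and interchanging the roles of $\omega _0$ and $\omega _A$ via the second identity after \eqref{Eq:WeightFuncMatrixA}, gives continuity of $(T_A^{-1})^*$ in the reverse direction. Therefore $T_A^*$ is a homeomorphism between the two spaces. It is the unique extension from $\Sigma _1$ because it is the restriction of the continuous map $T_A^*$ on $\Sigma _1'$.
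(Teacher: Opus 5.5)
Your proposal is correct and follows essentially the same route as the paper. You compute the short-time Fourier transform of the pullback with a linearly transformed window, use that $T_A$ and $(T_A^{-1})^t$ have determinant one to change variables in the $L^\Phi$ modular, and read off the norm identity. The paper pulls the window back as $\phi _A=T_A^*\phi$ rather than using $\phi \circ T_A^{-1}$ on the other side, which is the same thing.

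Your proposal also makes explicit several points the paper dismisses as a straight-forward consequence. These are the appeal to Proposition \ref{Prop:NormInvModSp} for the change of window, and defining the extension as the restriction of $T_A^*$ on $\Sigma _1'$. That restriction is the natural meaning of uniqueness here, since $\Sigma _1$ need not be norm dense when $\Phi$ lacks a local $\Delta _2$-condition.
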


\par

\begin{proof}
Let $K\in \Sigma _1'(\rr {2d})$, $\phi \in \Sigma _1(\rr {2d})\setminus 0$,
$K_A=T_A^*K$ and $\phi _A=T_A^*\phi $. Then
$\phi _A\in \Sigma _1(\rr {2d})\setminus 0$. By straight-forward computations
it follows that
$$
V_{\phi _A}K_A(x,y,\xi ,\eta )
=
V_\phi K(x-A(x-y),x-A(x-y),\xi + A^*(\xi +\eta ),\eta - A^*(\xi +\eta )).
$$
By multiplying the equality by $\omega _A$
in \eqref{Eq:WeightFuncMatrixA} and applying the $L^\Phi$ norm
we get
\begin{equation}
\label{Eq:STFTNormEstPullback}
\nm {V_{\phi _A}K_A\cdot \omega _A}{L^\Phi}
=
\nm {V_{\phi _A}K\cdot \omega _0}{L^\Phi}.
\end{equation}
In the last equality we have used the fact that
$$
\det 
\left (
\begin{matrix}
I-A & A\\
-A & I+A
\end{matrix}
\right )
=
\det 
\left (
\begin{matrix}
I+A^* & A^*\\
-A^* & I-A^*
\end{matrix}
\right )
=1.
$$
The assertion is now a straight-forward consequence of
\eqref{Eq:STFTNormEstPullback}.
\end{proof}

\par

\begin{rem}
In \cite{ToPfTe}, a broad family of modulation spaces are presented, where
each modulation space $M(\omega ,\mascB )$ is parameterized by a weight
$\omega \in \mascP _E(\rr {2d})$ and normal invariant quasi-Banach function
space $\mascB$ on $\rr {2d}$. If $\omega _0 \in \mascP _E(\rr {4d})$
and $\mascB$ hosts functions defined on $\rr {4d}$, then the previous proof
shows that $T^*A$ in Proposition \ref{Prop:STFTNormEstPullback} is
a homeomorphism from $M(\omega _0,\mascB )$
to $M(\omega _A,\mascB )$. This extends Proposition
\ref{Prop:STFTNormEstPullback} to more general modulation spaces.
\end{rem}

Next we extend Proposition \ref{Prop:OrliczKernelsFIO} to
Fourier integral operators of the form \ref{Eq:FIntOp3Again}.
Here the conditions of involved weight functions need to
be modified into
\begin{multline}
\tag*{(\ref{Eq:SchattWeightCond0A})$'$}
\label{Eq:SchattWeightCond0AB}
\omega _0(x+A(x-y),y+A(x-y),(I-A^*)\xi +\fy '_x(X),A^*\xi + \fy '_y(X))
\\[1ex]
\le
C\omega (x,\zeta ,\xi ,-\fy '_\zeta (X)),
\quad X= (x,y,\zeta ).
\end{multline}

\par

\renewcommand{\rubrik}{Proposition \ref{Prop:OrliczKernelsFIO}$'$\!}

\par

\begin{tom}
Suppose $\Phi$ is a Young function which satisfies a local $\Delta _2$ condition,
$s>1$, $\dbar >0$, $\omega ,\omega _0\in \mascP _s(\rr {4d})$ and
$v\in \mascP _s(\rr {6d})$
satisfy {\rm{\ref{Eq:SchattWeightCond0AB}}},
\eqref{Eq:SchattWeightCond2A}--\eqref{Eq:SchattWeightCond4A},
and let $\fy \in C^2(\rr {3d})$ be such that $\fy '' \in M^{\infty ,1}_{(v)}(\rr {3d})$
and
\begin{equation}
\tag*{(\ref{Eq:DetphiCond3})$'$}
\label{Eq:DetphiCond3A}
\left |
\det
\left (
\fy ''_{y,\zeta}  -A^*(\fy ''_{x,\zeta} + \fy ''_{y,\zeta} )
\right )
\right |
\ge
\dbar .
\end{equation}
Then the map in {\rm{\ref{Eq:FIntOp3Kernel}}}
from $\Sigma _1(\rr{2d})$ to $\Sigma _1'(\rr {2d})$ extends uniquely to
a continuous map from $M^\Phi _{(\omega )}(\rr {2d})$ to
$M^\Phi _{(\omega _0)}(\rr {2d})$, and
\begin{equation}
\tag*{(\ref{Eq:KerSymbOrlEst})$'$}
\label{Eq:KerSymbOrlEstA}
\nm {K_{a,\fy ,A}}{M^\Phi _{(\omega _0)}}
\le
C\dbar ^{-1}\exp (\nm \fy{M^{\infty ,1}_{(v)}})\nm a{M^\Phi _{(\omega )}},
\qquad
a\in M^\Phi _{(\omega )}(\rr {2d}),
\end{equation}
for some constant $C$ which is independent of $a$, $\fy$ and $\Phi$.
\end{tom}

\par

\begin{proof}
We shall reduce ourselves to the case when $A=0$, and then
Proposition \ref{Prop:OrliczKernelsFIO} will give the result.

\par

Let
\begin{align*}
\psi (x,y,\zeta )
&\equiv
\fy (x+A(x-y),y+A(x-y),\zeta )
\intertext{and}
\vartheta (x,y,\xi ,\eta )
&\equiv
\omega _0(x+A(x-y),y+A(x-y),\xi -A^*(\xi +\eta ),\eta +A^*(\xi +\eta )).
\intertext{Then}
\fy (x,y,\zeta )
&=
\psi (x-A(x-y),y-A(x-y),\zeta )
\intertext{and}
\omega _0(x,y,\xi ,\eta )
&=
\vartheta (x-A(x-y),y-A(x-y),\xi +A^*(\xi +\eta ),\eta -A^*(\xi +\eta )).
\end{align*}
Furthermore, the change rule gives
\begin{alignat}{1}
\psi '_x &= \fy '_x+A^*(\fy '_x+\fy '_y),
\quad
\psi '_y = \fy '_y+A^*(\fy '_x+\fy '_y),
\quad
\psi ' _\zeta = \fy '_\zeta
\label{Eq:PhaseFuncPullbackTransf1}
\\[1ex]
\fy '_x &= \psi '_x-A^*(\fy '_x+\fy '_y),
\quad
\fy '_y = \psi '_y-A^*(\fy '_x+\fy '_y),
\label{Eq:PhaseFuncPullbackTransf2}
\intertext{and}
\psi _{y,\zeta}'' &= \fy _{y,\zeta}'' -A^*(\fy _{x,\zeta}'' + \fy _{y,\zeta}'').
\label{Eq:PhaseFuncPullbackTransf3}
\end{alignat}

\par

By \ref{Eq:DetphiCond3A} and \eqref{Eq:PhaseFuncPullbackTransf3}
we get $|\det (\psi ''_{y,\zeta })|\ge \dbar$. In particular,
\eqref{Eq:DetphiCond3} holds with $\psi$ in place of $\fy$.

\par

Let $z_1=x+A(x-y)$ and $z_2=y+A(x-y)$. A combination of
\ref{Eq:SchattWeightCond0AB}
and \eqref{Eq:PhaseFuncPullbackTransf1} also gives
\begin{align*}
&\vartheta (x,y,\xi +\psi '_x,\psi '_y)
\\[1ex]
&=
\omega _0(z_1,z_2,\xi +\psi '_x -A^*(\xi +\psi '_x+\psi '_y),
\psi '_y +A^*(\xi +\psi '_x+\psi '_y))
\\[1ex]
&=
\omega _0(z_1,z_2,(I-A^*)\xi +\fy '_x ,A^*\xi +\fy '_y )
\\[1ex]
&=
\omega _0(x+A(x-y),y+A(x-y),(I-A^*)\xi +\fy '_x ,A^*\xi +\fy '_y ).
\\[1ex]
&\le
C\omega (x,\zeta ,\xi ,-\fy '_\zeta )
=
C\omega (x,\zeta ,\xi ,-\psi '_\zeta ),
\end{align*}
which shows that \eqref{Eq:SchattWeightCond0A} holds with $\psi$
in place of $\fy$.

\par

Consequently, all assumptions in Proposition \ref{Prop:OrliczKernelsFIO}
are fulfilled with $\psi$ and $\vartheta$ in place of $\fy$ and $\omega _0$,
respectively. Hence Proposition \ref{Prop:OrliczKernelsFIO} 
and in particular \eqref{Eq:KerSymbOrlEst} give $K_{a,\psi}\in M^\Phi _{(\vartheta )}$,
and that
\begin{equation}
\tag*{(\ref{Eq:KerSymbOrlEst})$''$}
\label{Eq:KerSymbOrlEstB}
\nm {K_{a,\psi}}{M^\Phi _{(\vartheta )}}
\le
C\dbar ^{-1}\exp (\nm \fy{M^{\infty ,1}_{(v)}})\nm a{M^\Phi _{(\omega )}},
\qquad
a\in M^\Phi _{(\omega )}(\rr {2d}).
\end{equation}

\par

Since 
$$
K_{a,\fy ,A}(x,y)
=
K_{a,\psi}(x-A(x-y),y-A(x-y)),
$$
it follows from Proposition \ref{Prop:STFTNormEstPullback} that
$$
\nm {K_{a,\fy ,A}}{M^\Phi _{(\omega _0)}}
\asymp
\nm {K_{a,\psi}}{M^\Phi _{(\vartheta )}}.
$$
The estimate \ref{Eq:KerSymbOrlEstA} now follows by combining
the last relation with \ref{Eq:KerSymbOrlEstB}, and the
result follows.
\end{proof}

\par

In similar ways as for Theorem \ref{Thm:OrliczSchattenFIO} and its
proof, we get the following extension by combining
Proposition \ref{Prop:KernelOrlSchatt} with Proposition
\ref{Prop:OrliczKernelsFIO}$'$. The details are left for the reader.

\par

\renewcommand{\rubrik}{Theorem \ref{Thm:OrliczSchattenFIO}$'$\!}

\par

\begin{tom}
Suppose $A$ is a real $d\times d$ matrix, $\Phi$ is a Young function
which satisfies a local $\Delta _2$ condition,
either $1< q_\Phi \le p_\Phi <2$ or $1\le q_\Phi = p_\Phi \le 2$,
$s>1$, $\dbar >0$, $\omega ,\omega _0\in \mascP _s(\rr {4d})$ and
$v\in \mascP _s(\rr {6d})$
satisfy {\rm{\ref{Eq:SchattWeightCond0AB}}},
\eqref{Eq:SchattWeightCond1A}--\eqref{Eq:SchattWeightCond4A},
$\fy \in C^2(\rr {3d})$ satisfies $\fy '' \in M^{\infty ,1}_{(v)}(\rr {3d})$
and {\rm{\ref{Eq:DetphiCond3A}}}, and let
$a\in M^\Phi _{(\omega )}(\rr {2d})$. Then
$\op _{A,\fy} (a)\in \mascI _\Phi (\omega _1,\omega _2)$,
and
\begin{equation}
\tag*{(\ref{Eq:FIOOrlSchattEst})$'$}
\label{Eq:FIOOrlSchattEstA}
\begin{aligned}
\nm {\op _{A,\fy} (a)}{\mascI _\Phi (\omega _1,\omega _2)}
&\le
C\dbar ^{-1}\exp (\nm \fy{M^{\infty ,1}_{(v)}})\nm a{M^\Phi _{(\omega )}},
\\
a&\in M^\Phi _{(\omega )}(\rr {2d}),
\end{aligned}
\end{equation}
for some constant $C$ which is independent of $a$, $\fy$ and $\Phi$.
\end{tom}

\end{document}